\newtheorem{theorem}{Theorem}[section]
\newtheorem{lemma}[theorem]{Lemma}
\theoremstyle{definition}
\newtheorem{corollary}[theorem]{Corollary}
\newtheorem{proposition}[theorem]{Proposition}
\newtheorem*{theo*}{Theorem}
\newtheorem*{Conjecture*}{Conjecture}
\theoremstyle{remark}
\DeclareMathOperator{\Der}{Der}
\DeclareMathOperator{\Aut}{Aut}
\DeclareMathOperator{\ad}{ad}
\DeclareMathOperator{\End}{End}
\numberwithin{equation}{section}
\begin{document}

\title{Automorphisms and derivations of affine commutative and PI-algebras}

\author{Oksana Bezushchak}
\address{Faculty of Mechanics and Mathematics,
	Taras Shevchenko National University of Kyiv, 60, Volodymyrska street, 01033  Kyiv, Ukraine}
\email{bezushchak@knu.ua}
\thanks{The first author was supported by the PAUSE program (France), partly supported by UMR 5208 du CNRS; and partly supported by MES of Ukraine: Grant for the perspective development of the scientific direction "Mathematical sciences and natural sciences" at TSNUK}

\author{Anatoliy Petravchuk}
\address{Faculty of Mechanics and Mathematics,
	Taras Shevchenko National University of Kyiv, 60, Volodymyrska street, 01033  Kyiv, Ukraine}
\email{petravchuk@knu.ua, apetrav@gmail.com}
\thanks{The second author was supported by MES of Ukraine: Grant for the perspective development of the scientific direction "Mathematical sciences and natural sciences" at TSNUK}

\author{Efim Zelmanov}
\address{SICM, Southern University of Science and Technology, 518055 Shenzhen, China}
\email{efim.zelmanov@gmail.com}

\subjclass[2020]{Primary 13N15, 16R99, 16W20; Secondary 16R40, 16W25, 17B40, 17B66}

\date{\today}

\keywords{PI-algebra, Lie algebra, automorphism,   derivation, affine commutative algebra}

\begin{abstract}
We prove analogs of A.~Selberg's result  for finitely generated subgroups of  $\text{Aut}(A)$ and of  Engel's theorem for subalgebras of  $\text{Der}(A)$ for a finitely generated associative commutative algebra $A$ over an associative commutative ring. We prove also an analog of the theorem of W.~Burnside and I.~Schur about locally finiteness of  torsion subgroups of $\text{Aut}(A)$.
\end{abstract}

\maketitle

\section{Introduction}

Let $\textbf{A}$ be the algebra of regular (polynomial)  functions on  an affine algebraic variety $V$ over an associative commutative ring $\Phi$ with  $1.$ 

The group of $\Phi$-linear automorphisms $\text{Aut}(\textbf{A})$ and the Lie algebra of $\Phi$-linear derivations $\text{Der} (\textbf{A})$ are referred to as the group of polynomial automorphisms of $V$ and the Lie algebra of vector fields on $V$, respectively.

When the variety $V$ is irreducible, i.e. the ring $\textbf{A}$ is a domain,  the group $\text{Aut}(K)$ of  automorphisms of the field $K$ of fractions of $\textbf{A}$ is called the group of birational automorphisms of $V$; and the Lie algebra $\text{Der} (K)$ of derivations of  $K$ is called the Lie algebra of rational vector fields on $V$.

Let $\mathbb{F}$ be the field. Then $\mathbb{F}[x_1,\ldots, x_n]$ and $\mathbb{F}(x_1,\ldots, x_n)$ are the polynomial algebra and the field of rational functions. The group $\text{Aut}(\mathbb{F}(x_1,\ldots, x_n))$ and the algebra $\text{Der}(\mathbb{F}(x_1,\ldots, x_n))$ (resp. $\text{Aut}(\mathbb{F}[x_1,\ldots, x_n])$ and $\text{Der}(\mathbb{F}[x_1,\ldots, x_n])$) are called the \emph{Cremona group} and the \emph{Cremona Lie algebra} (resp. \emph{polynomial Cremona group} and \emph{polynomial Cremona Lie algebra}).

Recall that a group is called \emph{linear} if it is embeddable into a group of invertible matrices over an associative commutative ring. Groups $\text{Aut}(\textbf{A})$ are, generally speaking, not linear. It has been an ongoing effort of many years to understand:
\begin{center}\emph{which properties of linear groups can be carried over to \\ automorphisms groups $\emph{\text{Aut}(\textbf{A})}$ and to Cremona groups}?\end{center}

J.-P.~Serre \cite{Serre38,Serre37} studied finite subgroups of Cremona groups. V.~L.~Popov \cite{Popov28} initiated the study of the question of whether the celebrated Jordan's theorem on finite subgroups of linear groups carries over to the groups  $\text{Aut}(A).$ For some important results in this direction see \cite{Bandman_Zarhin5,Birkar8,Deserti11,Popov28,Popov29,Prokhorov_Shramov31}.

S.~Cantat \cite{Cantat10} proved the Tits Alternative for Cremona groups of rank $2$.

In this paper, we prove analogs of A.~Selberg's result \cite{Selberg36} (see also \cite{Alperin2}) for finitely generated subgroups of  $\text{Aut}(A)$ and of  Engel's theorem for subalgebras of  $\text{Der}(A)$ for a finitely generated associative commutative algebra $A.$

We say that a group is \emph{virtually torsion free} if it has a subgroup of finite index that is torsion free.

\begin{theorem}\label{Theorem1} Let $A$ be a finitely generated associative commutative algebra over an associative commutative ring $\Phi$ with $1$. Suppose that $A$ does not have additive torsion. Then
	\begin{enumerate}
		\item[$(a)$] an arbitrary finitely generated subgroup of the group $\emph{\text{Aut}}(A)$ is virtually torsion free;
		\item[$(b)$] if $A$ is a finitely generated ring (i.e. $\Phi$ is the ring of integers $ \mathbb{Z}$), then the group $\emph{\text{Aut}}(A)$ is virtually torsion free.
\end{enumerate} \end{theorem}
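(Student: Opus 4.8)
The plan is to reduce everything to a classical result about linear groups—namely Selberg's lemma, which says that a finitely generated subgroup of $\mathrm{GL}_n(\mathbb{F})$ over a field $\mathbb{F}$ of characteristic $0$ (or more generally a finitely generated linear group) is virtually torsion free—by producing a faithful linear representation of the relevant automorphism group. First I would replace $A$ by a sufficiently large finitely generated subalgebra: given a finitely generated subgroup $G = \langle g_1,\dots,g_k\rangle \le \mathrm{Aut}(A)$, let $a_1,\dots,a_m$ be algebra generators of $A$ and consider the $\Phi$-submodule $M$ spanned by all the elements $g_{i_1}^{\pm 1}\cdots g_{i_r}^{\pm 1}(a_j)$ for words of length up to some bound; more robustly, one takes $M$ to be a finitely generated $\Phi$-submodule that contains $a_1,\dots,a_m$ and is invariant under all $g_i^{\pm 1}$, hence under $G$. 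The subtlety is that such an invariant finitely generated $M$ need not exist in general, so instead I would work inside the finitely generated subring and use that each $g_i$ acts by a $\Phi$-algebra map determined by finitely many coefficients, passing to the subring $\Phi_0 \subseteq \Phi$ generated by those coefficients; then $A_0 = \Phi_0[a_1,\dots,a_m]$ is a finitely generated $\mathbb{Z}$-algebra invariant under $G$, and $G$ embeds into $\mathrm{Aut}_{\Phi_0}(A_0)$. This reduces part $(a)$ to the situation of part $(b)$: $A$ a finitely generated ring with no additive torsion.

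Next, for a finitely generated ring $A$ with no additive torsion, I would exploit that $A$ embeds into $A \otimes_{\mathbb{Z}} \mathbb{Q}$, a finitely generated $\mathbb{Q}$-algebra, and that $\mathrm{Aut}(A)$ acts on $A \otimes \mathbb{Q}$, but the cleaner route is to use that $A$ is a finitely generated module over $\mathbb{Z}$ in a "graded" or filtered sense only if $A$ is integral over $\mathbb{Z}$—which it generally is not. So the real engine should be a Noetherian/Jacobson argument: $A$ is Noetherian (Hilbert basis theorem), and one locates a finite-rank free $\mathbb{Z}$-submodule or a finite quotient detecting torsion. Concretely, I would argue that for each prime $p$ large enough, reduction mod $p$ gives a finitely generated $\mathbb{F}_p$-algebra $A/pA$ on which $\mathrm{Aut}(A)$ acts, and combine with the characteristic-zero localization $A_{\mathbb{Q}}$; a torsion automorphism of finite order $n$ acts trivially on $A$ iff it acts trivially on a large enough finite piece, and one shows the kernel of $\mathrm{Aut}(A) \to \mathrm{Aut}(A/NA)$ for a suitable integer $N$ (or product of the relevant congruence conditions) is torsion free, because an element of the kernel of finite order would, by a $p$-adic filtration argument (the classical lemma that in $1 + pM_n(\mathbb{Z}_p)$ there are no nontrivial torsion elements for $p > 2$), have to be trivial. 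The main obstacle is exactly this: $\mathrm{Aut}(A)$ need not act on any finitely generated faithful $\mathbb{Z}$-module, so one cannot directly cite Selberg; the content is in manufacturing a faithful action on a finitely generated module over a Noetherian commutative ring and then running Selberg's original valuation-theoretic argument in that generality.

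Therefore the structure I would follow is: (1) reduce $(a)$ to the finitely generated ring case by replacing $\Phi$ with the finitely generated subring generated by the structure constants of the finitely many given automorphisms; (2) in the finitely generated ring case, show $\mathrm{Aut}(A)$—or at least any finitely generated subgroup—embeds into $\mathrm{GL}_n(R)$ for some finitely generated commutative ring $R$, by choosing generators $a_1,\dots,a_m$ of $A$, noting the relations among them are finitely generated, and observing an automorphism is determined by where it sends the generators, with the images constrained to lie in a fixed finitely generated submodule after adjoining finitely many coefficients to the base; (3) invoke the general form of Selberg's lemma for finitely generated subgroups of $\mathrm{GL}_n$ over a finitely generated commutative ring (which holds via embedding that ring into a product of fields/local fields and using that a finitely generated linear group over such is virtually torsion free); (4) for part $(b)$, upgrade "every finitely generated subgroup is virtually torsion free" to "$\mathrm{Aut}(A)$ itself is virtually torsion free" using that $\mathrm{Aut}(A)$ is in fact finitely generated-by-residually-finite, or more directly that $\mathrm{Aut}(A)$ embeds into a single $\mathrm{GL}_n(R)$ with $R$ finitely generated over $\mathbb{Z}$ (here the no-torsion hypothesis on $A$ guarantees the representation can be taken over a torsion-free, hence characteristic-computable, base), so that Selberg's lemma applies to the whole group at once. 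The delicate point throughout, and where I expect to spend the most care, is step (2)/(4): controlling the target so that the representation is faithful and lands in finitely many variables, since a priori automorphisms could have "unbounded complexity"—this is handled by the Noetherian property of $A$ together with the fact that $A$ is a quotient of $\mathbb{Z}[x_1,\dots,x_m]$ by a finitely generated ideal, so the automorphism group is cut out inside an affine scheme of matrices by finitely many polynomial equations over $\mathbb{Z}$.
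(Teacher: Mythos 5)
Your proposal hinges, in steps (2)--(4), on embedding a finitely generated subgroup of $\mathrm{Aut}(A)$ --- and for part (b) all of $\mathrm{Aut}(A)$ --- into $\mathrm{GL}_n(R)$ for a finitely generated commutative ring $R$, and then quoting Selberg's lemma. No such embedding is constructed, and in general none exists: as the paper itself emphasizes, the groups $\mathrm{Aut}(A)$ are generally not linear, which is precisely why the theorem is not a corollary of Selberg's lemma. Even for a finitely generated subgroup the invariant finitely generated submodule you want typically fails to exist: in $\mathrm{Aut}(\mathbb{Z}[x,y])$ a subgroup generated by one nonlinear triangular automorphism and one linear automorphism sends $x,y$ to polynomials of unbounded degree as you run over group elements, so no finitely generated additive subgroup containing $x,y$ is invariant. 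You acknowledge this obstacle, but the fallback --- that ``the automorphism group is cut out inside an affine scheme of matrices by finitely many polynomial equations over $\mathbb{Z}$'' --- fails for the same reason: automorphisms are determined by the images of the generators, but those images do not lie in any fixed finite-rank module, so $\mathrm{Aut}(A)$ is only an ind-object, not a subvariety of a matrix space. Thus the engine of your argument (linearization plus Selberg) is exactly the missing content, not a reduction you may assume.

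Your middle paragraph's congruence-subgroup idea is much closer to what the paper actually does, but it is neither carried out nor correct as stated: the kernel of $\mathrm{Aut}(A)\to\mathrm{Aut}(A/NA)$ need not be torsion free, since elements of $p$-power order can survive at level $p$. The paper's Lemma \ref{Lemma1} works directly on $\mathrm{Aut}(A)$ for a finitely generated domain $A$: a maximal ideal $I$ has finite residue field $GF(p^l)$; the set of ideals with this residue field is finite (they all contain the ideal generated by $p$ and the elements $a^{p^l}-a$), so a finite-index subgroup stabilizes $I$; a further finite-index subgroup acts trivially modulo $I^2$, giving $(1-h)(I^k)\subseteq I^{k+1}$, and the Krull intersection theorem $\bigcap_{k\ge 1}I^k=(0)$ forces every torsion element there to have $p$-power order. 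Torsion is then killed by repeating the argument inside $\langle 1/p,\,A\rangle$, where residue characteristics $q\neq p$ appear, and intersecting the two congruence subgroups. Finally, Theorem \ref{Theorem1} for general $A$ (not a domain, possibly non-reduced) requires the additional reductions the paper supplies: Lemma \ref{Lemma2} for semiprime rings via the total ring of fractions being a direct sum of fields, and the universal-ring/representability argument of Theorem \ref{Theorem4}(b) (or a radical-stabilizer argument) for the non-semiprime case. None of these steps appear in your outline, so as it stands the proposal does not amount to a proof.
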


\begin{corollary}[\rm\textbf{An analog of the theorem of W.~Burnside and I.~Schur}; see \cite{Jacobson17,Jacobson18}]\label{Cor1} Under the assumptions of theorem $\ref{Theorem1}(a)$ every torsion subgroup of  $\emph{\text{Aut}}(A)$ is locally finite.
\end{corollary}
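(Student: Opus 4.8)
The plan is to derive this immediately from Theorem~\ref{Theorem1}(a), combined with the elementary observation that a group which is simultaneously a torsion group and virtually torsion free must be finite. Recall that a group is locally finite precisely when each of its finitely generated subgroups is finite; so, given a torsion subgroup $G\le\text{Aut}(A)$, it suffices to show that an arbitrary finitely generated subgroup $H\le G$ is finite.

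First I would apply Theorem~\ref{Theorem1}(a) to the finitely generated subgroup $H$ of $\text{Aut}(A)$: it provides a subgroup $N\le H$ of finite index in $H$ that is torsion free. On the other hand, $H$ is contained in the torsion group $G$, so every element of $H$—and in particular every element of $N$—has finite order. A torsion-free group all of whose elements have finite order is trivial, whence $N=\{1\}$. Consequently $\abs{H}=[H:N]<\infty$, so $H$ is finite, which is exactly what we needed. It then follows that $G$ is locally finite.

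There is no genuine obstacle here once Theorem~\ref{Theorem1}(a) is available; the only point worth isolating is the implication ``virtually torsion free $+$ torsion $\Rightarrow$ finite'', which is the combinatorial bridge converting the structural conclusion of the theorem into local finiteness of torsion subgroups. Equivalently, the corollary may be restated as: under the hypotheses of Theorem~\ref{Theorem1}(a), the group $\text{Aut}(A)$ contains no infinite finitely generated torsion subgroup.
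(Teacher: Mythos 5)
Your argument is correct and is exactly the intended deduction: the paper treats the corollary as immediate from Theorem~\ref{Theorem1}(a), and your observation that a finitely generated torsion subgroup $H$ has a torsion-free finite-index subgroup which must then be trivial, forcing $H$ to be finite, is precisely that argument. Nothing is missing.
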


\begin{corollary}\label{Corollary2} Every torsion subgroup of a polynomial Cremona group \\ $\text{Aut}(\mathbb{F}[x_1,\ldots,x_n]),$ where $\mathbb{F}$ is a field of characteristic zero, has an abelian normal subgroup of finite index.
\end{corollary}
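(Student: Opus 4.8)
The plan is to derive Corollary~\ref{Corollary2} from Corollary~\ref{Cor1}, the Jordan property of the polynomial automorphism group of affine space, and an elementary patching argument for locally finite groups.

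First I would set up the reduction. Put $A=\mathbb{F}[x_1,\ldots,x_n]$. Since $\mathbb{F}$ has characteristic zero, $A$ is a $\mathbb{Q}$-algebra, so its additive group is torsion free, and $A$ is finitely generated over $\mathbb{F}$. Hence Theorem~\ref{Theorem1}$(a)$ and Corollary~\ref{Cor1} apply to $A$: every torsion subgroup $G\le\Aut(A)=\Aut(\mathbb{F}[x_1,\ldots,x_n])$ is locally finite. It therefore suffices to prove that a locally finite subgroup $G$ of the polynomial Cremona group contains an abelian subgroup of finite index; indeed, if $B\le G$ is abelian of finite index then its normalizer has finite index, so $G$ has only finitely many conjugates of $B$, and the normal core $\bigcap_{g\in G}gBg^{-1}$ is a finite intersection of abelian subgroups of finite index, hence an abelian normal subgroup of finite index, as required by the statement.

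Next I would invoke the fact that $\Aut(\mathbb{F}[x_1,\ldots,x_n])$ is Jordan: there is a constant $J=J(n)$ such that every finite subgroup contains an abelian subgroup of index at most $J$. Over a field of characteristic zero this follows from the Jordan property of the Cremona group $\mathrm{Bir}(\mathbb{P}^n)$ together with the embedding $\Aut(\mathbb{F}[x_1,\ldots,x_n])\hookrightarrow\mathrm{Bir}(\mathbb{P}^n_{\mathbb{F}})$; any finite subgroup is defined over a finitely generated, hence complex-embeddable, subfield, so one may assume $\mathbb{F}=\mathbb{C}$ (see \cite{Popov28,Popov29,Prokhorov_Shramov31,Birkar8}). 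This is the one place where characteristic zero and the specific geometry of affine space are used, and it is the deepest ingredient.

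Finally I would assemble these bounded abelian subgroups into one. For each finite subgroup $H\le G$ let $\mathcal{A}(H)$ be the set of abelian subgroups of $H$ of index at most $J$; it is finite and, by the previous step, nonempty. If $H\le H'$ then $B\mapsto B\cap H$ sends $\mathcal{A}(H')$ into $\mathcal{A}(H)$, because $[H:B\cap H]\le[H':B]\le J$. Since $G$ is locally finite, its finite subgroups form a directed set under inclusion, so $\{\mathcal{A}(H)\}_H$ is an inverse system of nonempty finite sets; by compactness its inverse limit is nonempty, and a compatible family $(A_H)_H$ yields an abelian subgroup $A=\bigcup_H A_H$ of $G$ (any two of its elements lie in $A_H$ for a common finite $H=\langle H_1,H_2\rangle$, using $A_{H_i}=A_H\cap H_i\subseteq A_H$) with $[G:A]\le J$ (any $J+1$ elements of $G$ generate a finite $H$, in which two of them must be congruent modulo $A_H\subseteq A$). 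Passing to the normal core of $A$ as above finishes the argument. The main obstacle is thus not the group theory — the patching step is entirely elementary — but the external input of the second step: everything rests on the Jordan property of $\Aut(\mathbb{F}[x_1,\ldots,x_n])$, with Corollary~\ref{Cor1} supplying exactly the local finiteness needed to promote a uniform bound on finite subgroups to a bound on $G$ itself.
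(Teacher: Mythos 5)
Your proposal is correct and follows essentially the same route as the paper, which deduces the corollary directly from Corollary \ref{Cor1} (local finiteness) together with the Jordan property of $\Aut(\mathbb{F}[x_1,\ldots,x_n])$ cited from \cite{Birkar8,Prokhorov_Shramov31}. The only difference is that you spell out the routine steps the paper treats as immediate, namely the inverse-limit patching that promotes the uniform Jordan bound on finite subgroups to the whole locally finite group and the passage to the normal core; both of these are carried out correctly.
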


Corollary \ref{Corollary2} immediately follows from corollary \ref{Cor1} and from the Jordan property of the group  $\text{Aut}(\mathbb{F}[x_1,\ldots,x_n]);$ see \cite{Birkar8,Prokhorov_Shramov31}.

If the torsion subgroup  in corollary \ref{Cor1} is torsion of bounded degree, then we don't need any assumptions on additive torsion. Indeed, in \cite{Bass_Lubotzky6}, it was shown that the group ${\rm Aut}(A)$ is locally residually finite. Hence, by the positive solution of the restricted Burnside problem (see \cite{Zelmanov41,Zelmanov42}), the group $G$ is locally finite.

Recall that a derivation $d$ of an algebra $A$ is called \emph{locally nilpotent} if for an arbitrary element $a\in A$ there exists an integer $n(a)\geq 1$ such that $d^{n(a)}(a)=0.$ For more information about locally nilpotent derivations see \cite{Freudenburg12}. An algebra is called \emph{locally nilpotent} if every finitely generated subalgebra is nilpotent.

Let $L\subseteq {\rm Der}(A)$ be a Lie algebra that consists of locally nilpotent derivations. The question of whether it implies that the Lie algebra $L$ is locally nilpotent was discussed in \cite{Freudenburg12,Petravchuk_Sysak26,Skutin39}. In particular, A.~Skutin \cite{Skutin39} proved local nilpotency of $L$ for a commutative domain $A$ of finite transcendence degree and characteristic zero.

\begin{theorem}\label{Theorem2}
	Let $A$ be a finitely generated  associative commutative algebra over an associative commutative ring, and let $L$ be a subalgebra of ${\rm Der}(A)$ that consists of locally nilpotent derivations. Then the Lie algebra $L$ is locally nilpotent.
\end{theorem}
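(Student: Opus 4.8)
The plan is to reduce to the Noetherian case and then use a ring-theoretic filtration argument together with Engel's theorem. Let me outline the steps.

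First I would reduce the base ring. Write $A$ as an algebra over $\Phi$ generated by finitely many elements; since only finitely many elements of $\Phi$ enter the structure constants, we may replace $\Phi$ by a finitely generated subring, hence assume $\Phi$ is Noetherian. Then $A$ is a Noetherian commutative ring. Next, to prove local nilpotency of $L$ it suffices to take a finitely generated Lie subalgebra $L_0 = \langle d_1,\ldots,d_m\rangle \subseteq L$ and show it is nilpotent. The key point is that each $d_i$ is locally nilpotent, and we want to conclude that $L_0$ acts nilpotently on $A$ in a strong enough sense; by Engel's theorem for Lie algebras, it is enough to show that $\mathrm{ad}(d)$ is nilpotent on $L_0$ for each $d \in L_0$, or — more in the spirit of the linear Engel theorem — to find a single finite-dimensional-over-$\Phi$ faithful-enough representation on which all elements of $L_0$ act nilpotently.

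The heart of the matter is to pass from ``locally nilpotent on $A$'' to ``nilpotent on a suitable finitely generated module''. Here I would use the generators $a_1,\ldots,a_k$ of $A$. For each $a_j$, local nilpotency of each $d_i$ gives $d_i^{N}(a_j)=0$ for $N$ large; but I need uniformity across all Lie words in the $d_i$. The standard device is: consider the $\Phi$-submodule $M$ of $A$ spanned by all elements $w(d_1,\ldots,d_m)(a_j)$ where $w$ ranges over associative words — this need not be finitely generated a priori, so instead I would argue inside a Noetherian quotient or use the following. Since $A$ is Noetherian, the ascending chain of ideals generated by $\{d_{i_1}\cdots d_{i_r}(a_j) : r \le s,\ i_t \le m,\ j \le k\}$ stabilizes; combined with the Leibniz rule (each $d_i$ is a derivation, so applying $d_i$ to a product spreads over factors), one shows that for $s$ large enough all longer iterated derivatives of the generators lie in the ideal generated by the earlier ones, and then local nilpotency forces a uniform bound: there is $n$ with $d_{i_1}\cdots d_{i_n}(A)=0$ for all choices of indices. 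In other words, the associative algebra generated by $\mathrm{ad}$-less operators $d_1,\ldots,d_m$ acting on $A$ is nilpotent. This is the main obstacle — getting a uniform nilpotency degree for all products, not just each single $d_i$ — and it is exactly where finite generation of $A$ and Noetherianity are used.

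Once the associative algebra of operators $\langle d_1,\ldots,d_m\rangle$ in $\End_\Phi(A)$ is nilpotent, $L_0$ consists of nilpotent operators and is closed under the associative product up to the Lie bracket; more precisely $\mathrm{ad}(d_i)$ acting on $L_0 \subseteq \End_\Phi(A)$ is nilpotent for every $i$ because it is a difference of left and right multiplications by the nilpotent operator $d_i$ in a nilpotent associative algebra. By Engel's theorem for Lie algebras (valid over any commutative ring in the form: a finitely generated Lie algebra in which every $\mathrm{ad}$ is nilpotent is nilpotent — or apply the linear version to the faithful action on the nilpotent associative operator algebra), $L_0$ is nilpotent. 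Since $L_0$ was an arbitrary finitely generated subalgebra of $L$, the Lie algebra $L$ is locally nilpotent, completing the proof. The one delicate point to check carefully in writing this up is that the Engel-type statement we invoke holds without a field or finite-dimensionality hypothesis; this is fine here because we have reduced everything to nilpotent operators on the module $A$ over a Noetherian ring, where the classical linear Engel theorem applies directly.
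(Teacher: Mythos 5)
There is a genuine gap, and it is fatal to the proposed route. Your central claim --- that for $s$ large enough one gets a uniform bound $n$ with $d_{i_1}\cdots d_{i_n}(A)=0$ for all index choices, i.e.\ that the associative operator algebra generated by $d_1,\dots,d_m$ in $\End_\Phi(A)$ is nilpotent --- is false already in the simplest cases. Take $A=\mathbb{Q}[x]$ and the single locally nilpotent derivation $d=\partial/\partial x$: then $d^{n}(x^{n})=n!\neq 0$ for every $n$, so a locally nilpotent derivation of a finitely generated algebra need not be nilpotent, and no Noetherian chain argument can produce such a bound. Likewise for $A=\mathbb{Q}[x,y]$ with the commuting locally nilpotent derivations $\partial_x,\partial_y$ one has $(\partial_x\partial_y)^n(x^ny^n)=(n!)^2\neq0$, so even for an abelian $L_0$ the operator algebra is not nilpotent. (Uniform nilpotency of powers does hold when $pA=0$, since then $d^{p^k}$ is again a derivation killing the generators --- this is exactly the paper's prime-characteristic step --- but it is precisely what fails in characteristic $0$.) Since everything downstream (nilpotency of the $d_i$ as operators, $\ad(d_i)$ being a difference of left and right multiplications in a nilpotent associative algebra, the appeal to linear Engel) rests on this claim, the argument collapses.

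There is a second, independent problem with the fallback you mention: the Engel-type statement ``a finitely generated Lie algebra over a commutative ring in which every $\ad$ is nilpotent is nilpotent'' is false in this generality (Golod-type constructions give finitely generated nil, non-nilpotent Lie algebras), and the classical linear Engel theorem needs a finite-dimensional module, which $A$ is not. What you can prove by your finite-generation argument is only the weak Engel property (for each pair $d,d'$ some $\ad(d)^{n(d,d')}d'=0$), which is the paper's Lemma~\ref{Lemma4}, and by itself it does not yield local nilpotency. The paper bridges this gap with substantially heavier machinery: it shows ${\rm Der}(A)$ satisfies a Lie polynomial identity (Proposition~\ref{Proposition3}, via Razmyslov's identity for $W_n$ and generic matrices), invokes the theorem of \cite{Zelmanov43} for Lie PI-algebras with ad-nilpotent commutators, uses Plotkin's locally nilpotent radical for weakly Engel Lie rings, and runs a chain of reductions (prime characteristic, prime and semiprime rings via finite-dimensionality of $K\,{\rm Der}(A)$ and approximation by matrices over prime fields, then the Jacobson radical and torsion). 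Some input of this kind appears unavoidable; a purely Noetherian filtration argument cannot work, as the $\mathbb{Q}[x]$ example shows.
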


The assumption of finite generation of the algebra $A$ is essential. If $A$ is the algebra of polynomials in countably many variables over a field, then there exists a non-locally nilpotent Lie subalgebra $L\subseteq {\rm Der}(A)$ that consists of locally nilpotent derivations. The following theorem, however, imposes a finiteness condition that is weaker than  finite generation.

Let $A$ be a commutative domain. Let $K$ be the field of fractions of $A.$ An arbitrary derivation of the domain $A$ extends to a derivation of the field $K,$ ${\rm Der} (A)\subseteq {\rm Der} (K).$ We have $K{\rm Der} (K)\subseteq {\rm Der} (K),$ hence ${\rm Der} (K)$ can be viewed as a vector space over the field $K.$

\begin{theorem}\label{Theorem3} Under the assumptions above, let $L\subseteq {\rm Der}(A)$ be a Lie ring that consists of locally nilpotent derivations. Suppose that ${\rm dim}_{K}KL<\infty .$ Then the Lie ring $L$ is locally nilpotent.
\end{theorem}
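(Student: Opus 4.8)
The plan is to reduce Theorem~\ref{Theorem3} to Theorem~\ref{Theorem2}. It suffices to prove that an arbitrary finitely generated sub-Lie-ring $L_0=\langle d_1,\dots,d_m\rangle$ of $L$ is nilpotent. Regard $\mathfrak g:=KL$ as a $K$-submodule of ${\rm Der}(K)$: it has finite rank $N:=\dim_K KL$, and it is closed under the bracket because $[fD,E]=f[D,E]-E(f)D$ for $f\in K$ and $D,E\in{\rm Der}(K)$. Put $\mathfrak g_0:=KL_0\subseteq\mathfrak g$, of rank $N_0\le N$, and fix $D_1,\dots,D_{N_0}\in L_0$ forming a $K$-basis of $\mathfrak g_0$.

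Next I would produce a finitely generated $\Phi$-subalgebra $A_0\subseteq A$ that is invariant under each $d_k$ and on which $L_0$ acts faithfully. For faithfulness, observe that $D_1,\dots,D_{N_0}$ are $K$-linearly independent as derivations of the domain $A$ (a relation $\sum c_iD_i=0$ on $A$ with $c_i\in K$ extends to a relation on $K$), so there exist $a_1,\dots,a_{N_0}\in A$ with $\det\bigl(D_i(a_j)\bigr)\neq 0$; then any $D=\sum c_iD_i\in\mathfrak g_0$ killing all the $a_j$ is zero, hence the restriction map $L_0\to{\rm Der}(A_0)$ is injective as soon as $a_1,\dots,a_{N_0}\in A_0$. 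Granting that such an $A_0$ exists, each $d_k|_{A_0}$ is locally nilpotent (the restriction of a locally nilpotent derivation to an invariant subalgebra is locally nilpotent), so by Theorem~\ref{Theorem2} the Lie ring $L_0|_{A_0}=\langle d_1|_{A_0},\dots,d_m|_{A_0}\rangle$ is locally nilpotent; being finitely generated it is nilpotent, and via the isomorphism $L_0\cong L_0|_{A_0}$ so is $L_0$.

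The main obstacle is the construction of $A_0$: one takes the smallest $d_1,\dots,d_m$-invariant $\Phi$-subalgebra of $A$ that contains $a_1,\dots,a_{N_0}$ together with the finitely many denominators needed to express each $d_k$ and the structure constants of $\mathfrak g_0$ in the basis $D_1,\dots,D_{N_0}$, and one must show it is finitely generated. This is exactly where $\dim_KKL<\infty$ is used: it forces $K$ to have finite transcendence degree over the common constant subfield $C:=\bigcap_{D\in\mathfrak g}\ker(D\colon K\to K)$, which bounds the orbit of the $a_j$ under iterated application of $d_1,\dots,d_m$ and permits straightening the finite-rank involutive distribution $\mathfrak g$; organizing this bookkeeping (and the positive-characteristic version of the straightening) is the technical heart. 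An alternative that avoids Theorem~\ref{Theorem2} is to prove that $L$ satisfies the $N$-Engel identity ${\rm ad}(d)^N(e)=0$: expanding ${\rm ad}(d)^n(e)(a)=\sum_i(-1)^i\binom ni d^{\,n-i}\!\bigl(e(d^i(a))\bigr)$ and using local nilpotency of $d$ on $A$ shows ${\rm ad}(d)^n(e)(a)=0$ for all large $n$ and each $a\in A$, and then finiteness of $\dim_K KL$ allows one to test this against enough elements $a$ to kill the derivation ${\rm ad}(d)^N(e)$ itself — the subtlety being that ${\rm ad}(d)$ is not $K$-linear on $KL$, so the linear-algebra step must be carried out over $K$ with the Leibniz correction tracked (via a companion-matrix recursion over $K$); once the bounded Engel identity is in hand, Zelmanov's theorem on Engel Lie algebras completes the proof.
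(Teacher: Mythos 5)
Your reduction to Theorem \ref{Theorem2} founders on the step you yourself flag as the ``main obstacle'': the existence of a finitely generated, $d_1,\dots,d_m$-invariant subalgebra $A_0\subseteq A$ containing the test elements $a_1,\dots,a_{N_0}$ (the faithfulness criterion via $\det\bigl(D_i(a_j)\bigr)\neq 0$ is fine). The natural candidate --- the subalgebra of $A$ generated by all iterated derivatives of a finite set --- has no reason to be finitely generated; to control it one must straighten unordered words $d_{i_1}\cdots d_{i_r}$ into ordered ones using the relations $[d_i,d_j]=\sum_t c_{ijt}d_t$, and the structure constants $c_{ijt}=a_{ijt}/b_{ijt}$ live in $K$, not in $A$. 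So the finitely generated invariant ring this procedure actually produces (the paper's $\widetilde A$, generated by the elements $v(a_{ijt})$, $v(b_{ijt})$ and $v(b_{ijt})^{-1}$ for ordered products $v$) sits inside $K$ rather than inside $A$, and on it the derivations are in general no longer locally nilpotent (already $\partial_x$ acting on $x^{-1}$ gives $\partial_x^{\,n}(x^{-1})=\pm n!\,x^{-n-1}$). This is precisely why the paper cannot apply Theorem \ref{Theorem2} at this point and instead proves and uses Proposition \ref{Proposition1}: the subring $A\cap\widetilde A$, on which the derivations do act locally nilpotently, is an order in $\widetilde A$, and Proposition \ref{Proposition1} yields $L^{r}(\widetilde A)=(0)$ for some $r$. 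Even then the proof is not finished: one passes to the constant field $K_0$ of $L^r$, to the domain $A'=K_0A$ over which $K_0L'$ is finite-dimensional, and concludes with the Petravchuk--Sysak theorem together with Lemma \ref{Lemma5} and Plotkin's theorem. None of these ingredients appear in your plan, and your heuristic that $\dim_K KL<\infty$ bounds the transcendence degree of $K$ over the common constants (and thereby the orbit of the $a_j$ inside $A$) is neither proved nor the mechanism that makes the argument work.

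The fallback via an Engel identity has a gap of the same nature. Testing $\mathrm{ad}(d)^n(e)$ against $a_1,\dots,a_{N_0}$ with $\det\bigl(D_i(a_j)\bigr)\neq 0$ gives only the weak Engel property, with $n$ depending on the pair $(d,e)$ --- essentially Lemmas \ref{Lemma4}--\ref{Lemma5} of the paper --- not a uniform $N$-Engel identity, and you do not supply the ``companion-matrix recursion'' that is supposed to produce the uniform bound. Moreover, the local nilpotency theorem of \cite{Zelmanov43} that the paper relies on requires the Lie ring to be PI together with ad-nilpotency of all commutators in a finite generating set, and here $A$ is not finitely generated, so Proposition \ref{Proposition3} does not furnish a polynomial identity for $L$. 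Until you either construct $A_0$ inside $A$ (which the discussion above indicates is not possible in general) or actually establish a bounded Engel identity, the proof is incomplete.
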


A special case of this theorem was proved by A.~P.~Petravchuk and K.~Ya.~Sysak in \cite{Petravchuk_Sysak26}.

The proof of theorem \ref{Theorem3} is based on  a stronger version of theorem \ref{Theorem2}, which is of independent interest.

Recall that a subalgebra $B$ of an associative commutative algebra $A$ is called an \emph{order} in $A$ if there exists a multiplicative semigroup $S\subset B$ such that
\begin{enumerate}
	\item[(1)] every element from $S$ is invertible in $A,$
	
	\item[(2)] an arbitrary element $a\in A$ can be represented as $a=s^{-1}b,$ where $s\in S$ and $b\in B.$
\end{enumerate}

Let $L \subseteq {\rm Der}(A)$ be a subalgebra. The subset
$A_L=\{ a\in A \ | \  \text{for an arbitrary} \ d\in L \  \text{there exists an integer} \ n(d)\geq 1 \ {\rm such \ that} \ d^{n(d)}(a)=0\}$ is a subalgebra of the algebra $A.$

\begin{proposition}\label{Proposition1}
	Let $A$ be a finitely generated commutative domain. Let $L$ be a subalgebra of  ${\rm Der}(A).$ If the subalgebra $A_L$ is an order in $A$, then the Lie algebra $L$ is locally nilpotent.
\end{proposition}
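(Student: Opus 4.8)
The plan is to derive Proposition~\ref{Proposition1} from Theorem~\ref{Theorem2}. Fix $d_1,\dots,d_m\in L$; it suffices to prove that the Lie subalgebra $\mathfrak g=\langle d_1,\dots,d_m\rangle$ of $L$ is nilpotent. Since $A_L$ is an order in $A$, there is a multiplicative semigroup $S\subseteq A_L$ consisting of elements invertible in $A$, with $A=S^{-1}A_L$. Because $A$ is a finitely generated $\Phi$-algebra, choosing generators $a_1,\dots,a_n$ of $A$, writing $a_j=s_j^{-1}b_j$ with $s_j\in S$, $b_j\in A_L$, and putting $s=s_1\cdots s_n\in S$, we obtain a finitely generated subalgebra $B_0=\Phi[b_1,\dots,b_n,s_1,\dots,s_n]$ of $A_L$ with $s\in B_0$ and $A=B_0[s^{-1}]$.

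The core of the proof is to enlarge $B_0$ to a \emph{finitely generated, $\mathfrak g$-invariant} subalgebra $C$ with $B_0\subseteq C\subseteq A_L$. The natural candidate is $C=\Phi[V]$, where $V$ is the $\Phi$-submodule of $A$ generated by the orbit $\{\,u(\gamma) : u\in U,\ \gamma\in\{b_1,\dots,b_n,s_1,\dots,s_n\}\,\}$ of the generating set of $B_0$ under the associative subalgebra $U\subseteq\End_\Phi(A)$ generated by $d_1,\dots,d_m$. By construction $\mathfrak g\,V\subseteq V$, so $C$ is $\mathfrak g$-invariant, and $B_0\subseteq C$. What has to be established is: (i) $V$ is finitely generated as a $\Phi$-module, so that $C$ is a finitely generated $\Phi$-algebra; and (ii) $V\subseteq A_L$, so that $C\subseteq A_L$ (recall $A_L$ is a subalgebra). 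Both points genuinely use that $A_L$ is an order: for derivations that are merely locally nilpotent on $A$ the analogous orbit can fail to be finitely generated and the ambient Lie algebra need not be nilpotent — e.g.\ $y^2\partial_x$ and $x^2\partial_y$ on $\Phi[x,y]$, whose Lie algebra already contains non-locally-nilpotent elements and for which the orbit of $xy$ is infinite-dimensional. For (ii) one works inside $A=B_0[s^{-1}]$ and keeps track of the $s$-pole orders arising along the orbit, combining the associative identity $d^{\,p}e=\sum_{a\ge0}\binom{p}{a}\bigl(\ad_d^{\,a}e\bigr)d^{\,p-a}$ in $\End_\Phi(A)$ (valid for $d,e\in\mathfrak g$, with $\ad_d^{\,a}e\in L$) with the fact that each $\gamma$ is annihilated by a power of every element of $L$; the bounded pole orders forced by the order structure make the relevant sums terminate, giving the local nilpotency in (ii), and the same bookkeeping bounds $V$, giving (i).

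Granting the existence of $C$, the proposition follows quickly. Since $s\in B_0\subseteq C$ and $A=B_0[s^{-1}]$, we have $A=C[s^{-1}]$; as $C$ is $\mathfrak g$-invariant, restriction defines a Lie algebra homomorphism $\mathfrak g\to\Der(C)$, and it is injective because a derivation of $A$ vanishing on $C$ vanishes on $C[s^{-1}]=A$. Thus $\mathfrak g\cong\mathfrak g|_C\subseteq\Der(C)$. Every element of $\mathfrak g|_C$ is the restriction to $C$ of an element of $\mathfrak g\subseteq L$, and since $C\subseteq A_L$ each such restriction is a locally nilpotent derivation of $C$. As $C$ is a finitely generated commutative algebra over $\Phi$, Theorem~\ref{Theorem2} applies to the Lie subalgebra $\mathfrak g|_C$ of $\Der(C)$ and shows it is locally nilpotent; being finitely generated (by $d_1|_C,\dots,d_m|_C$), it is therefore nilpotent. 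Hence $\mathfrak g$ is nilpotent, as required.

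I expect the main obstacle to be steps (i)--(ii): extracting, from the order hypothesis, the finitely generated $\mathfrak g$-invariant subalgebra of $A_L$ on which Theorem~\ref{Theorem2} can be run — in particular controlling the growth of pole orders along $U$-orbits. The reduction to a finitely generated $\mathfrak g$ at the start, and the passage from $C$ back to the statement via Theorem~\ref{Theorem2}, are routine.
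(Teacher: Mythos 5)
Your closing reduction (restriction to $C$, injectivity because a derivation vanishing on $C$ vanishes on $C[s^{-1}]=A$, Theorem~\ref{Theorem2} applied to $\mathfrak g|_C$) is fine, but the entire proof hangs on the step you do not prove: the existence of a finitely generated, $\mathfrak g$-invariant subalgebra $C$ with $B_0\subseteq C\subseteq A_L$, and the ``pole-order bookkeeping'' you invoke for (i)--(ii) is not an argument. For (ii), to get $d_i^N\bigl(d_j(\gamma)\bigr)=0$ from $d_i^{\,n}(\gamma)=0$ you expand $d_i^N d_j=\sum_a\binom{N}{a}\bigl(\ad(d_i)^a d_j\bigr)d_i^{N-a}$; the surviving terms are those with $N-a<n$, and killing them requires that $\ad(d_i)^a d_j$ annihilate the finitely many elements $\gamma, d_i(\gamma),\dots,d_i^{\,n-1}(\gamma)$ for all large $a$ --- an Engel-type control on $L$ which is essentially what the proposition (and the paper's Lemma~\ref{Lemma5}) is about, not a consequence of the order hypothesis. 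Note also that in this setting elements of $L$ need not be locally nilpotent on $A$ at all (e.g. $A=\Phi[x,(1+x)^{-1}]$, $L=\Phi\,\partial_x$, $A_L=\Phi[x]$: $\partial_x$ kills numerator and denominator of $(1+x)^{-1}$ but acts non-nilpotently on it), so ``kill numerator and denominator'' does not control the orbit, and nothing bounds the $s$-pole order of $u(\gamma)$ as the word $u$ grows. Point (i) is in the same position: finiteness of the $U$-orbit is exactly what Lemma~\ref{Lemma10} of the paper provides, but only under the stronger hypotheses of Theorem~\ref{Theorem3} (every element of $L$ locally nilpotent on all of $A$ and $\dim_K KL<\infty$), via ordered products and the structure constants $c_{ijt}\in K$ (Lemmas~\ref{Lemma11}, \ref{Lemma12}); and even there the resulting finitely generated invariant ring $\widetilde A$ only contains $A\cap\widetilde A$ as an order, which is precisely why the paper at that point invokes Proposition~\ref{Proposition1} rather than Theorem~\ref{Theorem2}. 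Completing your construction along these lines would thus be circular, or at least requires exactly the new input the proposition demands.

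For comparison, the paper never constructs an invariant finitely generated subalgebra of $A_L$. It first settles prime characteristic: writing the generators as $a_i=b_i^{-1}c_i$ with $b_i,c_i\in A_L$, a suitable $p$-power $d^{p^k}$ is again a derivation killing all $b_i,c_i$, hence all $a_i$, so $d^{p^k}=0$; then $\Der(A)$ is Lie PI (Proposition~\ref{Proposition3}) and Zelmanov's theorem gives local nilpotency. In the general case it combines the weak Engel property (Lemma~\ref{Lemma5}) and Plotkin's locally nilpotent radical with the bound $\dim_K K\Der(A)<\infty$ (Lemma~\ref{Lemma7}), the stabilizing chain of ideals $I_{i+1}=\sum[\mathrm{id}_L(d),\mathrm{id}_L(d)]$, inversion of a common denominator, and approximation modulo primes (Lemma~\ref{Lemma6}) to reduce to the characteristic-$p$ case. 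So the gap in your proposal is not a routine verification left to the reader; it is the heart of the matter, and the paper's route around it is genuinely different.
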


To achieve a natural generality and to expand to noncommutative cases we extended theorems \ref{Theorem1} and \ref{Theorem2} to algebras with polynomial identities, i.e. $\text{PI}$-algebras; see \cite{Aljad_Gia_Procesi_Regev1,Belov_Rowen7,Rowen35}.

A $\text{PI}$-algebra is called \emph{representable} if it is embeddable in a matrix algebra over an associative commutative algebra. In \cite{Small40}, L. W. Small constructed an example of a finitely generated $\text{PI}$-algebra that is not representable.

\begin{theorem}\label{Theorem4}
	Let $A$ be a finitely generated representable $\emph{\text{PI}}$-algebra over an associative commutative ring. Suppose that $A$ does not have additive torsion. Then
	\begin{enumerate}
		\item[(a)] an arbitrary finitely generated subgroup of the group ${\rm Aut}(A)$ is virtually torsion free;
		
		\item[(b)] if $A$ is a finitely generated ring, then the group ${\rm Aut}(A)$ is virtually torsion free.
	\end{enumerate}
\end{theorem}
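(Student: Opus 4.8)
The plan is to reduce the PI case to the commutative case already handled in Theorem 1, using the structure theory of representable PI-algebras. First I would invoke representability: $A$ embeds in $M_k(C)$ for some associative commutative $\Phi$-algebra $C$; since $A$ is finitely generated, by a standard argument (localizing $C$ at the finitely many structure constants needed to express the generators and their products) we may assume $C$ is itself a finitely generated commutative $\Phi$-algebra, and since $A$ has no additive torsion we may assume $C$ has no additive torsion (replace $C$ by the subring generated by the matrix entries of the generators of $A$, which sits inside $M_k(C)$ and hence is torsion-free when $A$ is — here one must be slightly careful, but torsion-freeness of $A$ together with faithfulness of the embedding lets us pass to a torsion-free $C$). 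Thus $A \subseteq M_k(C)$ with $C$ finitely generated commutative and torsion-free.

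Next I would relate $\mathrm{Aut}(A)$ to something linear over $C$. The key point is that any $\varphi \in \mathrm{Aut}(A)$, being determined by its action on the finitely many algebra generators $a_1,\ldots,a_m$ of $A$, is a $\Phi$-linear transformation of the finitely generated $\Phi$-submodule (indeed finitely generated $C$-submodule of $M_k(C)$) spanned by a suitable finite set of products of generators; more robustly, one can use that $A$ is a Noetherian $C$-module or pass to the trace ring / representable hull. Concretely: let $\widehat{A}$ be the $C$-subalgebra of $M_k(C)$ generated by $A$ (or the relatively free representable algebra associated to $A$); it is a finitely generated $C$-module, and since $C$ is Noetherian, $\widehat{A}$ is a Noetherian $C$-module, hence contained in a free $C$-module of finite rank. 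The subtlety is that automorphisms of $A$ need not extend to $\widehat{A}$. To get around this I would instead argue directly: a finitely generated subgroup $G \le \mathrm{Aut}(A)$ generated by $\varphi_1,\ldots,\varphi_r$ acts on the finitely generated $\Phi$-submodule $V \subseteq A$ spanned by all $\varphi_{i_1}^{\pm 1}\cdots \varphi_{i_s}^{\pm 1}(a_j)$ over a fixed large finite index set — but this module is not $G$-invariant in general either.

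The cleanest route, and the one I would actually pursue, is to adjoin traces. Since $A \subseteq M_k(C)$, consider the trace map $\mathrm{tr}: M_k(C) \to C$ and the characteristic-coefficient maps. Form the algebra $\widetilde{A}$ generated by $A$ and all traces of products of elements of $A$; by the Artin–Procesi / Razmyslov theory of trace identities (or simply because $C$ is Noetherian), $\widetilde{A}$ is a finitely generated module over the finitely generated commutative ring $\widetilde{C} = C_0[\text{traces}]$, and crucially $\widetilde{C}$ is a characteristic subring preserved by every automorphism of $A$ extended via the universal property. Then $\mathrm{Aut}(A)$ maps to $\mathrm{Aut}(\widetilde{C})$ with kernel $N$, where $\widetilde{C}$ is finitely generated commutative and torsion-free, so by Theorem 1(a) any finitely generated subgroup of $\mathrm{Aut}(\widetilde C)$ is virtually torsion-free. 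For $N$: an automorphism fixing all traces fixes the characteristic polynomials, hence acts on $\widetilde{A}$ as a $\widetilde{C}$-algebra automorphism of a module-finite algebra over a Noetherian ring, landing $N$ inside $GL$ of a finite-rank $\widetilde{C}$-module, i.e. $N$ is linear over a finitely generated torsion-free commutative ring; Selberg's lemma (the classical one) then gives that any finitely generated subgroup of $N$ is virtually torsion-free. Combining the two via the exact sequence $1 \to N \to \mathrm{Aut}(A) \to \mathrm{Aut}(\widetilde C)$ and the standard fact that a group with a virtually-torsion-free normal subgroup and virtually-torsion-free quotient is itself virtually torsion-free (applied to finitely generated subgroups) yields (a); part (b) follows the same way taking $\Phi = \mathbb{Z}$, where $\widetilde C$ is a finitely generated ring so Theorem 1(b) applies to the whole group and $N$ is finitely generated (being a subgroup of $GL_n$ over a finitely generated ring, by Noetherianity it is residually finite and one applies the finitely generated Selberg statement).

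The main obstacle I expect is the trace-ring argument — specifically, verifying that the trace ring $\widetilde{C}$ (and the extension $\widetilde A$) can be constructed canonically enough that \emph{every} automorphism of $A$ extends, and that the resulting $\widetilde{C}$ is genuinely finitely generated and torsion-free. This is where one must cite the Artin–Procesi theorem on representable PI-algebras and the fact (Amitsur, Razmyslov) that for a finitely generated PI-algebra embeddable in $M_k(C)$ the relevant trace ring is affine. Once that structural input is in place, the rest is a routine two-term extension argument with Selberg's lemma at both ends. An alternative avoiding traces would be to use that $A$ itself, being a finitely generated module over its center (if the center is large enough — true for prime PI but not in general), reduces directly; but since $A$ need not be prime I would commit to the trace-ring approach.
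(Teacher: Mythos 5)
Your central step --- constructing a trace ring $\widetilde{C}$ (and the module-finite extension $\widetilde{A}$) so that \emph{every} automorphism of $A$ acts on it --- is exactly the point you flag as ``the main obstacle,'' and it is a genuine gap, not a routine verification. The traces you adjoin are computed with respect to one chosen embedding $A\hookrightarrow M_k(C)$; an abstract automorphism $\varphi$ of $A$ has no reason to preserve these trace values, and since $A$ is not assumed prime there is no canonical reduced trace to fall back on. The trace-ring/Artin--Procesi machinery gives affineness of the trace ring, but not the equivariance you need, and the kernel-of-the-action argument for $N$ (that an automorphism fixing traces is $\widetilde{C}$-linear on $\widetilde{A}$) presupposes the same unproved extension. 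The paper resolves precisely this difficulty by a different device: from a presentation $A\simeq \mathbb{Z}\langle x_1,\dots ,x_m\mid R\rangle$ and $m$ generic $n\times n$ matrices it forms the \emph{universal} commutative coefficient ring $U=\mathbb{Z}[X]/I$ (the entries of the relations $r(X_1,\dots ,X_m)$ generating $I$), whose universal property --- every embedding $A\to M_n(C)$ factors uniquely through $M_n(U)$ --- forces every automorphism of $A$ to induce an automorphism of $U$. Passing to $\overline{U}=U/J$ (killing additive torsion and the radical), one gets an embedding ${\rm Aut}(A)\hookrightarrow {\rm Aut}(\overline{U})$ with $\overline{U}$ finitely generated, commutative, semiprime and torsion-free, and the commutative case (Lemmas \ref{Lemma1} and \ref{Lemma2}) finishes the proof with no group extensions at all. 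Note also that the paper's logical order is the opposite of yours: Theorem \ref{Theorem4}(b) is proved first and Theorem \ref{Theorem1} is deduced from it, so ``reduce to Theorem \ref{Theorem1}'' is not available as a black box, though its commutative lemmas are.

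Even granting the equivariant trace ring, your group-theoretic assembly has two further problems. First, the ``standard fact'' that an extension of a virtually torsion-free group by a virtually torsion-free group is virtually torsion-free is false in general (it holds when the kernel is torsion-free, or with finiteness hypotheses allowing a characteristic finite-index torsion-free subgroup of the kernel); since $G\cap N$ need not be finitely generated, your argument for (a) needs the torsion-free finite-index subgroup of $N$ to be cut out by a congruence-type condition compatible with the quotient, which you do not provide. Second, for (b) the classical Selberg lemma only covers finitely generated linear groups, and your claim that $N$ is finitely generated because it sits in $GL_n$ over a Noetherian ring is not correct; what is actually needed --- and what the paper supplies in Lemmas \ref{Lemma1} and \ref{Lemma2} via the finiteness of the set of ideals with fixed finite residue field, the filtration $(1-h)(I^k)\subseteq I^{k+1}$, the Krull intersection theorem, and a second prime $q\neq p$ --- is virtual torsion-freeness of the \emph{whole} automorphism group of a finitely generated torsion-free commutative (semiprime) ring, a strictly stronger statement than Selberg's.
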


\begin{theorem}\label{Theorem5}
	Let $A$ be a finitely generated $\emph{\text{PI}}$-algebra over an associative commutative ring. Suppose that $A$ does not have additive torsion. Then an arbitrary torsion subgroup of ${\rm Aut}(A)$ is locally finite.
\end{theorem}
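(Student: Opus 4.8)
The plan is to reduce Theorem 5 to Theorem 4 by eliminating the representability hypothesis via a "Noetherian reduction" argument, and then to invoke the positive solution of the restricted Burnside problem for the remaining torsion-of-bounded-degree obstruction. First I would recall that by a theorem of Bass and Lubotzky (cited in the excerpt after Corollary \ref{Cor1}), the group $\mathrm{Aut}(A)$ is locally residually finite for any finitely generated PI-algebra $A$. Hence it suffices to prove that a finitely generated torsion subgroup $G\subseteq\mathrm{Aut}(A)$ is finite. Because $G$ is residually finite and torsion, if we can bound the exponents of the finite quotients of $G$ uniformly — equivalently, if $G$ has bounded exponent — then by Zelmanov's solution of the restricted Burnside problem \cite{Zelmanov41,Zelmanov42} a finitely generated residually finite group of bounded exponent is finite, and we are done. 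So the crux is to show that a finitely generated torsion subgroup of $\mathrm{Aut}(A)$ has bounded exponent.

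Next I would bring in representability. Let $G=\langle g_1,\dots,g_m\rangle$ be a finitely generated torsion subgroup of $\mathrm{Aut}(A)$, where $A$ is generated by $a_1,\dots,a_k$. Since each $g_i$ and $g_i^{-1}$ acts on the finite generating set, there is a finitely generated subring $A_0\subseteq A$ (adjoin to $\mathbb Z\langle a_1,\dots,a_k\rangle$ the finitely many coefficients needed to express all $g_i^{\pm1}(a_j)$ in terms of the $a_j$) that is $G$-invariant, finitely generated, PI, and torsion-free. However $A_0$ need not be representable — Small's example shows finitely generated PI-algebras can fail to be representable. To fix this I would pass to a representable quotient: by a theorem of Lewin (or by Anan'in / Zubkov-style results on representability of relatively free PI-algebras), every finitely generated PI-algebra has a nilpotent ideal $N$ such that $A_0/N$ embeds in a matrix algebra over a commutative ring; iterating, $A_0$ has a finite chain of $G$-invariant ideals with representable (hence, after the additive-torsion issue is handled, Theorem 4-amenable) sections. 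Here I must be careful: I want the ideals to be $G$-invariant. Since the upper nilradical and the various canonical ideals in these structure theorems are characteristic, they are automatically $G$-invariant, which is the point that makes the reduction go through.

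Now $G$ acts on each section $A_0^{(i)}/A_0^{(i+1)}$, a finitely generated representable PI-algebra; after killing additive torsion (or noting it is absent since $A$ is torsion-free and working with the torsion-free part, at worst passing to $A_0^{(i)}/A_0^{(i+1)}$ tensored appropriately), Theorem 4(a) applies to give a normal torsion-free, hence trivial since $G$ is torsion, subgroup of finite index in the image of $G$ — i.e. the image of $G$ in $\mathrm{Aut}(A_0^{(i)}/A_0^{(i+1)})$ is finite, with order bounded in terms of the data. The kernel of the action of $G$ on the associated graded object acts unipotently on a nilpotent chain, hence embeds in a group of upper unitriangular matrices over a torsion-free ring, so it is torsion-free; being also a subgroup of the torsion group $G$, it is trivial. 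Therefore $G$ itself injects into the finite product of the finite images, so $G$ is finite — giving in particular that $G$ has bounded exponent, which is all we needed to close the argument. The main obstacle I anticipate is the second paragraph: arranging a $G$-invariant filtration whose sections are simultaneously representable \emph{and} torsion-free so that Theorem 4 genuinely applies, and verifying that the "unipotent kernel" is honestly torsion-free rather than merely residually-$p$; this requires choosing the structure-theory ideals to be characteristic and tracking the additive torsion carefully through the quotients.
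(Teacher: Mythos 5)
Your overall skeleton (pass to the nilpotent radical $J$, apply Theorem \ref{Theorem4} to the representable quotient $A/J$, and kill the "unipotent" kernel by the torsion-freeness of the stabilizer of the chain $A\supset J\supset J^2\supset\cdots$) is the same as the paper's, and those two pieces are sound. But there is a genuine gap in the middle step, which is exactly where the real work lies. Your plan applies Theorem \ref{Theorem4}(a) to the sections $J^i/J^{i+1}$ of the filtration, "a finitely generated representable PI-algebra". These sections are square-zero: as rings they have trivial multiplication, so a ring automorphism of $J^i/J^{i+1}$ is nothing but an automorphism of the underlying additive group, and the section is essentially never finitely generated \emph{as a ring} (it is only finitely generated as an $A/J$-bimodule, and possibly not even that, since ideals of finitely generated PI-rings need not be finitely generated). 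Hence Theorem \ref{Theorem4} simply does not apply to these layers, and the claim that "the image of $G$ in $\mathrm{Aut}(A_0^{(i)}/A_0^{(i+1)})$ is finite" has no justification in your argument. The paper's proof fills this hole with a different mechanism: after reducing to $J^2=(0)$, it uses that for $h$ in the finite-index subgroup $H$ acting trivially mod $J$ the map $1-h$ is a derivation, so its image lies in the $\overline A$-subbimodule $J'$ generated by the finitely many elements $(1-h_i)(a_j)$; then $H$ maps to the group of bimodule automorphisms of $J'$, i.e. to the unit group of $\mathrm{End}_{\overline A\otimes\overline A^{op}}(J')$, which is a PI-algebra by Regev's tensor product theorem and Lemma \ref{Lemma3}; finally Procesi's theorem on torsion subgroups of multiplicative groups of PI-algebras gives finiteness of the image, and a short binomial argument (using absence of additive torsion) kills the kernel. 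Nothing in your proposal plays the role of the Regev--Procesi step, and without it the argument does not close.

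A secondary point: the opening reduction via Bass--Lubotzky and the restricted Burnside problem is a detour that does not help here. To invoke Zelmanov's theorem you would need $G$ to have bounded exponent, and you have no source of such a bound; indeed your later paragraphs aim directly at finiteness of $G$, which makes the Burnside step redundant (the paper uses that route only in the remark about subgroups of \emph{bounded} exponent, where no torsion-freeness hypothesis is needed). Also, "iterating" the representable-quotient theorem is unnecessary: by Braun the Jacobson radical of a finitely generated PI-ring is nilpotent and $A/J$ is already representable, so the only filtration you need is by powers of $J$ -- but, as explained above, the layers of that filtration must be treated as bimodules, not as algebras.
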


We remark that theorem \ref{Theorem5} does not contain assumptions on representability.

C.~Procesi \cite{Procesi30} proved local finiteness of torsion subgroups of multiplicative groups of $\text{PI}$-algebras.

\begin{theorem}\label{Theorem6}
	Let $A$ be a finitely generated $\emph{\text{PI}}$-algebra over an associative commutative ring. Let $L\subseteq {\rm Der}(A)$ be a subalgebra that consists of locally nilpotent derivations. Then the Lie algebra $L$ is locally nilpotent.
\end{theorem}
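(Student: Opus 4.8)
The plan is to reduce the PI-case to the representable commutative case already handled in Theorem~\ref{Theorem2} by means of standard PI-theory. First I would recall that for a finitely generated PI-algebra $A$ over an associative commutative ring one has access to the structure theory of PI-algebras: the nilradical (or the appropriate prime radical) $N$ of $A$ is nilpotent, say $N^m=0$, and $A/N$ is a finitely generated semiprime PI-algebra. A derivation $d$ of $A$ stabilizes $N$ (since $N$ is a characteristic ideal, being defined by the PI-structure), so $L$ acts on $A/N$ and every element of $L$ induces a locally nilpotent derivation there. The key first observation is that nilpotency of the image $\bar L$ of $L$ in $\mathrm{Der}(A/N)$ together with $N^m=0$ forces nilpotency of the original finitely generated subalgebra of $L$: if $d_1,\dots,d_k\in L$ and the bracket ideal they generate acts nilpotently modulo $N$, then by a filtration argument using $A\supseteq N\supseteq N^2\supseteq\cdots\supseteq N^m=0$ one gets that sufficiently long iterated brackets of the $d_i$ annihilate $A$ entirely (each step down the filtration "costs" one factor, and derivations act on the associated graded). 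Hence it suffices to prove the theorem for $A$ semiprime.

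For $A$ semiprime and finitely generated PI, I would pass to the center and use the standard trick of localizing at central elements (à la Rowen/Artin–Procesi): $A$ embeds into a finite direct product of matrix algebras over commutative rings after inverting a suitable central element, or more precisely $A$ has a "generic" central element $c$ such that $A[c^{-1}]$ is an Azumaya algebra / finite module over its center $Z[c^{-1}]$. A locally nilpotent derivation of $A$ extends to $A[c^{-1}]$ (one shows $d(c^{-1})=-c^{-1}d(c)c^{-1}$ still lies in the localization, and the derivation stays locally nilpotent on the localized algebra since $d$ is locally nilpotent and $c$ lies in finitely generated subalgebras). A derivation of an Azumaya algebra over its center restricts to a derivation of the center plus an inner part; since $A$ is a domain-like situation after localization, or at any rate a finite module over its center $Z$, a locally nilpotent derivation of $A$ induces a locally nilpotent derivation on $Z$, and $Z[c^{-1}]$ is a finitely generated commutative algebra without additive torsion in the relevant reduced quotient — so Theorem~\ref{Theorem2} (applied to $Z[c^{-1}]$, or to $Z$ modulo its nilradical) yields local nilpotency of the induced Lie algebra of derivations of the center. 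One then has to bridge from derivations of $Z$ being locally nilpotent to derivations of $A$: the inner part of a locally nilpotent derivation of a PI-algebra is locally nilpotent as a derivation, and $[\mathrm{ad}(a),\mathrm{ad}(b)]=\mathrm{ad}([a,b])$, so the inner contributions form an ideal on which one argues directly via the PI-identity (the element $[a,b]$ is itself handled by Engel-type PI-results, e.g. the theorem that a PI-algebra generated by nilpotent elements whose brackets behave well is nilpotent).

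The main obstacle I anticipate is precisely the passage from "$L$ acts locally nilpotently and its image on the center is locally nilpotent" to "$L$ itself is locally nilpotent" — i.e. controlling the non-central (inner-type) part of the derivations. Concretely: given $d_1,\dots,d_k\in L$, one needs a uniform bound, and the subtlety is that local nilpotency is not a priori inherited by the full $\mathrm{Der}(A)$ from $\mathrm{Der}(Z)$ without using that $A$ is a finite module over $Z$ and invoking a PI Engel-type theorem. I would organize the argument so that the finitely generated subalgebra $M=\langle d_1,\dots,d_k\rangle\subseteq L$ acts on a finitely generated $A$-module (namely $A$ itself, finite over $Z$ after localization), each generator nilpotently, and then apply the noncommutative Engel theorem for Lie algebras acting on modules over PI-rings — or, failing a ready-made statement, bootstrap from Theorem~\ref{Theorem2} by a filtration of $A$ by $Z$-submodules. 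A secondary technical point is that the localizing central element $c$ must be chosen compatibly with a given finite set of derivations and a given finite set of elements of $A$, but since $Z$ is Noetherian (being a finitely generated commutative ring-algebra, by Artin–Tate) this causes no real trouble: one picks $c$ once to trivialize the Azumaya locus and works in $A[c^{-1}]$ throughout the nilpotency computation for $M$, then notes nilpotency of $M$ on $A[c^{-1}]$ together with the embedding $A\hookrightarrow A[c^{-1}]$ (valid on the semiprime part) gives nilpotency of $M$ on $A$.
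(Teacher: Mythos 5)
Your plan founders at exactly the point you flag as ``the main obstacle'': the bridge from information on the center back to the Lie ring $L$ itself. You defer this to an unnamed ``noncommutative Engel theorem for Lie algebras acting on modules over PI-rings,'' but such a statement, in the generality you need, \emph{is} essentially the theorem being proved. The paper manufactures this tool explicitly: it first proves that ${\rm Der}(A)$ is a PI \emph{Lie} ring for any finitely generated PI-ring $A$ (Proposition \ref{Proposition3}, via Razmyslov's identity $P_N$ for derivations of polynomial rings and the generic matrix presentation), and then combines Zelmanov's Engel-type theorem for Lie PI-algebras with ad-nilpotent commutators \cite{Zelmanov43} with Plotkin's locally nilpotent radical for weakly Engel Lie rings (Lemmas \ref{Lemma4}, \ref{Lemma5}). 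Note also that your intermediate setup is not available as stated: the generators $d_i$ are only \emph{locally} nilpotent, not nilpotent, and $A$ is not a finite module over anything at the outset; the paper obtains genuinely nilpotent derivations (hence ad-nilpotent elements) only after delicate reductions --- to characteristic $p$ via approximation by matrix rings over prime fields (Lemma \ref{Lemma6}), to prime quotients using the stabilizing chain $KI_1\supseteq KI_2\supseteq\cdots$ made possible by $\dim_K K{\rm Der}(A)<\infty$ (Lemma \ref{Lemma7}), and to square-zero differentially invariant ideals (Lemma \ref{Lemma9}). Within the paper's architecture, Theorem \ref{Theorem2} is itself a special case of Theorem \ref{Theorem6} proved by this machinery, so ``reduce to Theorem \ref{Theorem2}'' does not remove the need for the deep PI-Engel input.

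Your reduction to the semiprime case is also gapped. Nilpotency of the image of $M=\langle d_1,\dots,d_k\rangle$ in ${\rm Der}(A/N)$ only gives $M^r(A)\subseteq N$; derivations do not carry $N^i$ into $N^{i+1}$, so the ``one factor per step down the filtration'' bookkeeping fails, and what you actually need is nilpotency of the action of $M$ on every layer $N^i/N^{i+1}$ --- an Engel-type statement of the same depth as the theorem. This is why the paper peels off one square-zero ideal at a time (Lemma \ref{Lemma9}): with $I^2=(0)$ each $d\in M^r$ becomes an honestly nilpotent derivation, $M^r$ is finitely generated by Plotkin's theorem, and only then does \cite{Zelmanov43} give nilpotency of $M^r$, hence solvability and (again by Plotkin) nilpotency of $M$. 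Two further technical claims in your sketch are unsupported: Theorem \ref{Theorem6} carries no torsion hypothesis, and the paper spends a substantial argument on mixed characteristic (the ideals $I_p$, the fact that $d^{p^k}$ is again a derivation when $p$ kills $L^r(A)$), which your outline ignores; and the Azumaya/central-localization step presumes the center is affine or Noetherian (Artin--Tate requires $A$ to be module-finite over an affine central subalgebra), which is not available for a general finitely generated PI-ring over ${\mathbb Z}$.
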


\section{Preliminaries}

In this section, we review some facts that will be used in  proofs.

\subsection{}\label{1.1} Theorems \ref{Theorem1}, \ref{Theorem2}, \ref{Theorem5} and \ref{Theorem6} were formulated for finitely generated associative commutative algebras over an associative commutative ring $\Phi .$  We will show that it is sufficient to assume $\Phi =\mathbb Z,$ that is  to prove the theorems for finitely generated rings. In particular, theorems \ref{Theorem1}(b) and \ref{Theorem4}(b) imply theorems \ref{Theorem1}(a) and \ref{Theorem4}(a), respectively.
We will do it for theorem \ref{Theorem6}. The arguments for theorems \ref{Theorem1}, \ref{Theorem2} and \ref{Theorem5} are absolutely similar.

Let $\Phi$ be an associative commutative ring and let $A$ be an associative $\text{PI}$-algebra over $\Phi $ (see \textbf{\ref{1.2}}) generated by elements $a_1, \ldots a_m;$ and $ A\ni 1.$ Let $L\subseteq \Der _{\Phi}(A)$ be a Lie subalgebra generated by derivations $d_1, \ldots , d_n.$ Suppose that every derivation of the $\Phi$-algebra  $L$ is locally nilpotent. Let $\Phi \langle x_1, \ldots , x_m\rangle$ be the free associative $\Phi$-algebra in free generators $x_1, \ldots , x_m.$ Then there exist elements $f_{ij}(x_1, \ldots , x_m), 1\leq i\leq n, 1\leq j\leq m,$ such that $d_i(a_j)=f_{ij}(a_1, \ldots , a_m).$

Let $A_1$ be the subring of $A$ generated by  elements $1, a_1, \ldots ,a_m$ and by all coefficients of the elements $f_{ij}(x_1, \ldots ,x_m).$ It is straightforward that the subring $A_1$ is invariant under $d_1, \ldots , d_n.$  Assuming that theorem \ref{Theorem6} is true for $\Phi =\mathbb Z$, there exists an integer $r\geq 1$ such that $L^r(A_1)=(0).$ In particular, $L^r(a_i)=(0), 1\leq i\leq m.$ Since the elements $a_1, \ldots , a_m$ generate the $\Phi$-algebra $A$ we conclude that $L^r=(0).$

\vspace{15pt}

Let us review some basic definitions and facts about $\text{PI}$-algebras that can be found in the books \cite{Aljad_Gia_Procesi_Regev1,Belov_Rowen7,Rowen35}.

\subsection{}\label{1.2} An associative algebra over an associative commutative ring $\Phi \ni 1$ is said to be \emph{$\text{PI}$}- if there exists an element
$$ f(x_1, \ldots , x_n)=x_1\cdots x_n+\sum _{1\not =\sigma \in S_n}\alpha _{\sigma}x_{\sigma (1)}\cdots x_{\sigma (n)}$$
of the free associative algebra $\Phi \langle x_1, \ldots , x_n\rangle$ such that $f(a_1, \ldots , a_n)=0$ for arbitrary elements $a_1, \ldots , a_n\in A;$ hereafter $S_n$ is the  group of permutations of the set $\{1,\ldots, n \}$. In this case we say that the algebra $A$ satisfies the identity $f(x_1, \ldots , x_n)=0.$

If $A$ is a $\text{PI}$-algebra, then it satisfies an identity with all the coefficients $\alpha _{\sigma}, 1\not =\sigma\in S_n,$ lying in $\mathbb Z.$ In other words, every $\text{PI}$-algebra is $\text{PI}$ over $\mathbb Z,$ i.e. $\text{PI}$ as a ring.

\subsection{}\label{1.6} A ring $A$  is called \emph{prime} if the product of any two nonzero  ideals is different from zero. If $A$ is a prime $\text{PI}$-ring, then the center $$Z=\{ a\in A \ | \ ab=ba \  \rm{for \  an \ arbitrary \ element} \ b\in A\}\not =(0)$$
and the ring of fractions $(Z\setminus ( 0))^{-1}A$
is a finite-dimensional central simple algebra over the field of fractions of the domain $Z$; see \cite{Markov22,Rowen34}.

\subsection{}\label{1.7}  A ring $A$  is called \emph{semiprime} if it does not contain nonzero nilpotent ideals. Let $A$ be a finitely generated semiprime $\text{PI}$-ring. Let $Z$ be the center of $A$ and let $Z^*$ denote the set of elements from $Z$ that are not zero divisors. Then the ring of fractions ${(Z^{*})}^{-1}A$ is a finite direct sum of simple finite-dimensional (over their centers) algebras.

\subsection{}\label{1.8} An element $a\in L$ of a Lie algebra $L$ is called ad-\emph{nilpotent} if the operator $${\rm ad} (a) : L\to L, \quad {\rm ad} (a): x \mapsto  [a, x],$$ is nilpotent.

Suppose that a Lie algebra $L$ is generated by elements $a_1, \ldots , a_m.$ Commutators in $a_1, \ldots , a_m$ are defined via the following rules:
\begin{enumerate}
	\item[(i)] an arbitrary generator $a_i, 1\leq i\leq m,$ is a commutator in $a_1, \ldots , a_m;$
	
	\item[(ii)] if $\rho '$ and $\rho ''$ are commutators in $a_1, \ldots , a_m$, then $\rho =[\rho ', \rho '']$ is a commutator in $a_1, \ldots , a_m.$
\end{enumerate}

An element $a\in L$ is called a \emph{commutator} in $a_1, \ldots , a_m$ if it is a commutator because of (i) and (ii).

A Lie algebra $L$ over an associative commutative ring $\Phi \ni 1$ is called $\text{PI}$ (\emph{satisfies a polynomial identity}) if there exists a multilinear element of the free Lie algebra
$$f(x_0, x_1, \ldots , x_n)=({\rm ad} (x_1)\cdots {\rm ad} (x_n) +\sum _{1\not =\sigma \in S_n}\alpha _{\sigma}{\rm ad} (x_{\sigma (1)})\cdots {\rm ad} (x_{\sigma (n)}))x_0,  \alpha _{\sigma}\in \Phi,$$
such that $f(a_0, a_1, \ldots a_n)=0$ for arbitrary elements $a_0, a_1, \ldots a_n\in L.$

The following theorem was proved in \cite{Zelmanov42}.

\begin{theo*}[\cite{Zelmanov42}] Let $L$ be a Lie $\emph{\text{PI}}$-algebra over an associative commutative  ring  generated by elements $a_1, \ldots , a_m.$ Suppose that every commutator in $a_1, \ldots , a_m$ is $\emph{\text{ad}}$-nilpotent. Then the Lie algebra $L$ is nilpotent.\end{theo*}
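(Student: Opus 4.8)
The plan is to apply the Kostrikin--Zelmanov theory of sandwich algebras; this is the argument of \cite{Zelmanov42}, whose structure I outline. Recall that $c\in L$ is a \emph{sandwich} if $({\rm ad}\, c)^{2}=0$ and $({\rm ad}\, c)({\rm ad}\, b)({\rm ad}\, c)=0$ for every $b\in L$, and that the \emph{sandwich theorem} of Kostrikin and Zelmanov asserts that a Lie algebra generated by $k$ sandwiches is nilpotent of class bounded by a function of $k$ alone. Accordingly, the goal is to show that a polynomial identity together with ${\rm ad}$-nilpotency of every commutator in a finite generating set forces $L$ to contain enough sandwiches.

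First I would record the routine reductions: nilpotency of $L$ is a property of the underlying Lie ring, and the defining identity may be assumed to have integer coefficients, so one may take $\Phi=\mathbb Z$; then, passing to $L\otimes_{\mathbb Z}\mathbb Q$ and to the quotients $L/pL$ together with an analysis of the torsion, one reduces to the case where $L$ is a Lie algebra over a field, the hypotheses being preserved throughout. Next I would set up an induction on a complexity parameter attached to the pair consisting of the identity $f$ and the generating set $a_{1},\dots,a_{m}$. In the inductive step one proves that a nonzero $L$ satisfying the hypotheses contains a nonzero sandwich; letting $S\trianglelefteq L$ be the ideal generated by all sandwiches, the sandwich theorem makes $S$ locally nilpotent, while $L/S$ satisfies the hypotheses with a strictly smaller parameter and hence is nilpotent by induction. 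Combining these with the finite generation of $L$ --- which is what allows one to pass from local to global nilpotency and to keep the relevant bounds under control --- yields that $L$ itself is nilpotent.

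The heart of the matter, and the step I expect to be overwhelmingly the hardest, is the \emph{production of a sandwich}: showing that a nonzero Lie $\text{PI}$-algebra in which every commutator of a finite generating set is ${\rm ad}$-nilpotent contains a nonzero sandwich element --- this is Kostrikin's problem. Its resolution rests on a delicate analysis of multilinear elements of the free Lie algebra: one linearizes the Engel relations $({\rm ad}\, a)^{N}=0$, substitutes the resulting multilinear elements into the identity $f$, and then uses the combinatorics of the free Lie algebra and of the relatively free algebra modulo $f$ to manufacture, after finitely many steps, an element on which the defining products of adjoint operators vanish --- first a thin sandwich and then a genuine sandwich. Once such an element is available, the quantitative form of the sandwich theorem handles the remaining bookkeeping. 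Essentially all of the difficulty of \cite{Zelmanov42} is concentrated in this one step.
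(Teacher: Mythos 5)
The paper does not prove this statement at all: it is imported verbatim from the cited reference (\cite{Zelmanov42}; see also \cite{Zelmanov43}) and used as a black box in the proofs of theorems \ref{Theorem2}, \ref{Theorem3} and \ref{Theorem6}. So there is no internal proof to compare yours against; the only meaningful standard is whether your text is a proof in its own right, and it is not. What you have written correctly reproduces the architecture of the known argument --- reduction to $\Phi=\mathbb Z$, the Kostrikin--Zelmanov sandwich theorem, production of a nonzero sandwich, and an induction through the ideal generated by sandwiches --- but the entire mathematical content of the theorem is concentrated in the one step you explicitly decline to carry out, namely showing that a nonzero Lie \text{PI}-algebra in which every commutator in the generators is ad-nilpotent contains a nonzero sandwich (and in a quantitative form strong enough to drive your induction). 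Saying that this rests on ``a delicate analysis of multilinear elements'' and linearized Engel relations names the difficulty; it does not resolve it, and no reviewer could check or reconstruct the argument from what you give.

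Two further points are glossed in a way that would need repair even as an outline. First, the reduction to a field via $L\otimes_{\mathbb Z}\mathbb Q$ and the quotients $L/pL$ does not by itself yield nilpotency of the Lie ring $L$: one needs nilpotency classes for $L/pL$ bounded uniformly in $p$ (which in the cited work comes from the fact that the bounds depend only on the identity and the ad-nilpotency indices), and this uniformity has to be stated, not assumed. Second, the inductive step ``$L/S$ satisfies the hypotheses with a strictly smaller parameter'' presupposes a complexity parameter that you never define and a proof that passing to $L/S$ decreases it; in the actual proof this bookkeeping (together with the passage from local to global nilpotency via the finitely many generators) is itself nontrivial. As it stands, your proposal is an accurate summary of where the difficulty lies in \cite{Zelmanov42}, which is perfectly reasonable given that the paper itself cites the result without proof, but it should be presented as such rather than as a proof.
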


\section{Groups of automorphisms}

\begin{lemma}\label{Lemma1}
	Let $A$ be a finitely generated commutative domain without additive torsion. Then the group ${\rm Aut} (A)$ is virtually torsion free.
\end{lemma}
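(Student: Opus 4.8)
The plan is to embed ${\rm Aut}(A)$ into a linear group over a field (or a finite product of fields) and then invoke Selberg's lemma, which guarantees that a finitely generated linear group over a field of characteristic zero is virtually torsion free, together with its positive-characteristic companion. Since $A$ is a finitely generated commutative domain without additive torsion, its prime ring $\mathbb{Z}\cdot 1$ sits inside $A$ as a copy of $\mathbb{Z}$ (no torsion), so $A$ is an algebra over $\mathbb{Z}$ and, after localizing, over $\mathbb{Q}$ or over various $\mathbb{F}_p$. I would first fix a finite generating set $a_1,\dots,a_m$ of $A$ as a ring. The key observation is that any $\varphi\in{\rm Aut}(A)$ is determined by the tuple $(\varphi(a_1),\dots,\varphi(a_m))\in A^m$, and that ${\rm Aut}(A)$ therefore acts faithfully on the finitely generated $\mathbb{Z}$-submodule (or $\mathbb{Q}$-subspace, after tensoring) spanned by a suitable finite set of monomials in the $a_i$ that is large enough to contain all generators and to be "closed enough" to detect the automorphism.

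The technical heart is to produce a \emph{finite-dimensional} faithful representation. Here I would use that $A$ is Noetherian (Hilbert basis theorem) and a domain, with field of fractions $K$ of finite transcendence degree over the prime field. In characteristic zero, $K$ is a finitely generated field extension of $\mathbb{Q}$; pick a separating transcendence basis $t_1,\dots,t_d$ and a primitive element, so $K$ is a finite extension of $\mathbb{Q}(t_1,\dots,t_d)$. An automorphism of $A$ need not extend to $K$ fixing the base field, but it does extend to an automorphism of $K$ over $\mathbb{Q}$. The standard device is: choose a large but finite-dimensional $\mathbb{Q}$-subspace $V\subset A$ containing $1$, the generators $a_i$, and enough products so that $V$ generates $A$ and so that the restriction map ${\rm Aut}(A)\to {\rm GL}(V)$ is injective; injectivity holds because an automorphism fixing the $a_i$ is the identity, and one arranges $V\supseteq\{1,a_1,\dots,a_m\}$. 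This realizes ${\rm Aut}(A)$ as a subgroup of ${\rm GL}_N(\mathbb{Q})$ for $N=\dim_{\mathbb{Q}}V$. Then Selberg's lemma applies: any finitely generated subgroup of ${\rm GL}_N(\mathbb{Q})$ is virtually torsion free, and in fact (since ${\rm Aut}(A)$ here is itself finitely generated, being a subgroup of a linear group over a finitely generated domain and ... ) one gets the conclusion for the whole group. In the prime characteristic $p$ case, $A$ is an $\mathbb{F}_p$-algebra, $K$ is finitely generated over $\mathbb{F}_p$, and one must instead use the version of Selberg's lemma for linear groups over fields of positive characteristic finitely generated over the prime field — there a finitely generated subgroup of ${\rm GL}_N$ is virtually torsion free, and moreover, using that ${\rm Aut}(A)$ embeds in ${\rm GL}_N$ over a \emph{global function field}-like ring, the whole automorphism group (not merely finitely generated subgroups) is residually finite with torsion-free finite-index subgroup via congruence quotients.

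The main obstacle I anticipate is twofold: (i) ensuring the restriction representation on a finite-dimensional subspace $V$ is actually well-defined and faithful for \emph{all} of ${\rm Aut}(A)$ simultaneously — i.e.\ that a single finite-dimensional $V$ works — which requires $A$ to be finitely generated and some care about choosing $V$ to be a faithful "$\Phi$-form"; and (ii) handling the mixed/positive characteristic situation, where Selberg's lemma in its classical form does not directly give that the \emph{entire} group ${\rm Aut}(A)$ is virtually torsion free, only its finitely generated subgroups — one then needs the residual finiteness of ${\rm Aut}(A)$ (available from \cite{Bass_Lubotzky6}) to upgrade a uniform bound on torsion, or to split off the characteristic-$p$ case by a separate argument showing ${\rm Aut}(A)$ itself is finitely generated or is linear over a finitely generated ring and then applying the ring-theoretic form of Selberg. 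I would organize the proof by reducing via \textbf{\ref{1.1}} to $\Phi=\mathbb{Z}$, then splitting into the zero-characteristic and positive-characteristic cases according to whether $\mathbb{Z}\hookrightarrow A$ or $\mathbb{F}_p\hookrightarrow A$, and in each case constructing the finite-dimensional faithful linear representation and quoting the appropriate form of Selberg's result.
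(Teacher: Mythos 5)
There is a genuine gap, and it sits exactly at the ``technical heart'' you identify: the finite-dimensional faithful representation you propose does not exist. An automorphism of $A$ does not preserve any finite-dimensional subspace $V\subset A$ containing the generators: already for $A=\mathbb{Z}[x_1,x_2]$ the automorphisms $\varphi_n\colon x_1\mapsto x_1+x_2^{\,n},\ x_2\mapsto x_2$ send any fixed finite-dimensional $V\ni x_1$ outside of $V$ once $n$ is large, so ``the restriction map ${\rm Aut}(A)\to {\rm GL}(V)$'' is not a well-defined homomorphism at all. The fact that $\varphi$ is determined by the tuple $(\varphi(a_1),\dots,\varphi(a_m))$ only gives an injective map of sets into $A^m$, not a linear action. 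This is not a technicality one can arrange by choosing $V$ ``large enough'': as the introduction of the paper stresses, groups ${\rm Aut}(A)$ are in general not linear, and overcoming precisely this obstruction is the content of the lemma, so an argument that begins by linearizing ${\rm Aut}(A)$ and quoting Selberg is circular in spirit and false in execution. Two further problems: the lemma asserts that the \emph{whole} group ${\rm Aut}(A)$ is virtually torsion free, whereas Selberg's lemma (even granting linearity) only treats finitely generated subgroups, and ${\rm Aut}(A)$ need not be finitely generated; residual finiteness from \cite{Bass_Lubotzky6} does not upgrade this, since residually finite groups need not be virtually torsion free. Also, since $A$ is a domain without additive torsion it has characteristic zero, so your positive-characteristic branch is vacuous as stated; positive characteristic enters only through finite residue fields.

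For comparison, the paper's proof avoids linearity entirely and runs a congruence-subgroup-type argument inside $A$ itself: a maximal ideal $I$ has finite residue field $GF(p^l)$; there are only finitely many ideals with this quotient, so a finite-index subgroup of ${\rm Aut}(A)$ stabilizes $I$, and a further finite-index subgroup acts trivially on $A/I^2$. One then checks $(1-h)(I^k)\subseteq I^{k+1}$ for all $k$, and the Krull intersection theorem $\bigcap_k I^k=(0)$ forces every torsion element of this subgroup to have order a power of $p$. Repeating the construction in $\widetilde A=\langle 1/p, A\rangle\subseteq A\otimes_{\mathbb Z}\mathbb{Q}$ (this is where the absence of additive torsion is used) produces a second finite-index subgroup whose torsion elements have order a power of a different prime $q$, and the intersection of the two subgroups is torsion free. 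If you want to salvage your write-up, this filtration-by-powers-of-a-maximal-ideal mechanism is the ingredient to substitute for the nonexistent linear representation.
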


\begin{proof}
	Let $I$ be a maximal ideal of the ring $A.$ The field $A/I$ is finitely generated, hence $A/I$ is a finite field, $A/I\simeq GF(p^l).$ Let $\mathcal P$ be the set of all ideals $P\vartriangleleft A$ such that $A/P\simeq GF(p^l).$ Let $P_0$ be the ideal of the ring $A$ generated by all elements $a^{p^l}-a, a\in A,$ and by the prime number $p.$ It is easy to see that the ring $A/P_0$ is finite, $P_0\subseteq \cap _{P\in \mathcal P}P.$
	This implies that the set $\mathcal P$ is finite.
	
	Automorphisms of the ring $A$ permute ideals from $\mathcal P.$ The ideal $I$ belongs to $\mathcal P.$ Hence, there exists a subgroup $ H_1 \leq {\rm Aut} (A), |{\rm Aut} (A): H_1|<\infty ,$  that leaves the ideal $I$ invariant. We have $|A: I^2|<\infty .$ Therefore, there exists a subgroup $H_2\leq H_1, |{\rm Aut} (A):H_2|<\infty ,$ such that
	$$(1-h)(A)\subseteq I^2$$
	for an arbitrary element $h\in H_2.$ Furthermore, if $a_1, \ldots a_k\in I,$ then
	$$(h-1)(a_1\cdots a_k)=(h(a_1)-a_1+a_1)\cdots (h(a_k)-a_k+a_k)-a_1\cdots a_k=\sum b_1\cdots b_k,$$
	where each $b_i=(h-1)(a_i)$ or $a_i$ and in each summand at least one element $b_i$ is equal to $(h-1)(a_i).$ This implies that
	$$(1-h)(I^k)\subseteq I^{k+1}.$$
	By the Krull intersection theorem (see \cite{Atiyah_Macdonald4}), we have $$\bigcap _{k\geq 1}I^k=(0).$$ If an element from $H_2$ has  finite order, then this order must be a power of the prime number~$p.$
	
	Consider the ring $$\widetilde{A}=\langle 1/p , A\rangle \subseteq A\otimes _{\mathbb Z}\mathbb Q,$$ where $\mathbb Q$ is the field of rational numbers. If $\widetilde J$ is a maximal ideal of the ring $\widetilde A,$ then
	$$\widetilde{A}/\widetilde{J}\simeq GF(q^t)  \quad \text{for prime } \quad  q, \quad  q\not =p, \quad   \text{and} \quad \bigcap _{k\geq 1}\widetilde{J}\,^k=(0).$$
	Let $J=\widetilde{J}\cap A.$ Arguing as above, we find a subgroup $H_3\leq {\rm Aut} (A)$ of a finite index such that $(1-h)(J^k)\subseteq J^{k+1}, k\geq 0,$ for an arbitrary element $h\in H_3.$ Hence, if an element from $H_3$ has  finite order, then this order must be a power of the prime number $q.$
	
	Now, $H_2\cap H_3$ is a torsion free subgroup of ${\rm Aut} (A).$ This completes the proof of the lemma. \end{proof}

\begin{lemma}\label{Lemma2}
	Let $A$ be a semiprime finitely generated  associative commutative ring without additive torsion. Then the group ${\rm Aut} (A)$ is virtually torsion free.
	
\end{lemma}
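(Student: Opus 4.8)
The plan is to reduce the semiprime case to the domain case already handled in Lemma \ref{Lemma1}. Since $A$ is a finitely generated semiprime commutative ring, its nilradical is zero, so the intersection of the minimal prime ideals $P_1,\ldots,P_k$ of $A$ is $(0)$; by Noetherianity (Hilbert's basis theorem) there are only finitely many of them. The quotient $A_i=A/P_i$ is a finitely generated commutative domain, and the diagonal map gives an embedding $A\hookrightarrow A_1\times\cdots\times A_k$. The first point is that each $A_i$ inherits the absence of additive torsion: if $na=P_i$ for some $a\notin P_i$ and $n\geq 1$, then... actually one must be slightly careful here, since a quotient of a torsion-free group need not be torsion-free. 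The cleaner route is to observe that $A$ itself embeds into the torsion-free ring $A\otimes_{\mathbb Z}\mathbb Q$ localized appropriately; but simplest is to note that for the argument below we only need each $A_i$ to be a finitely generated domain whose characteristic behaviour we control, and Lemma \ref{Lemma1}'s proof really uses only that the residue fields at maximal ideals are finite, which holds for any finitely generated domain. I will therefore apply Lemma \ref{Lemma1} not directly but reprove its conclusion for the $A_i$ using the same Krull-intersection argument, which goes through verbatim for a finitely generated domain regardless of torsion (the torsion hypothesis in Lemma \ref{Lemma1} was only used to split finite orders into the prime $p$ and a prime $q\neq p$; here I will instead intersect over all residue characteristics at once).

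The key structural step is this: an automorphism $\varphi\in\operatorname{Aut}(A)$ permutes the finite set of minimal primes $\{P_1,\ldots,P_k\}$. Hence the subgroup $H=\{\varphi\in\operatorname{Aut}(A)\mid \varphi(P_i)=P_i\text{ for all }i\}$ has finite index in $\operatorname{Aut}(A)$ (index at most $k!$). Every $\varphi\in H$ induces an automorphism $\bar\varphi_i$ of the domain $A_i=A/P_i$, giving a group homomorphism $H\to\prod_{i=1}^k\operatorname{Aut}(A_i)$. I claim this homomorphism is injective: if $\varphi\in H$ acts trivially on every $A_i$, then $(\varphi-1)(A)\subseteq\bigcap_i P_i=(0)$, so $\varphi=1$. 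Thus $H$ embeds into $\prod_{i=1}^k\operatorname{Aut}(A_i)$.

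Now I finish as follows. For each $i$, the argument of Lemma \ref{Lemma1} (applied to the finitely generated domain $A_i$, and intersecting the relevant powers of maximal ideals over the two residue characteristics it produces) yields a finite-index torsion-free subgroup $G_i\leq\operatorname{Aut}(A_i)$; here I do need the torsion-freeness of $A_i$ as an abelian group to run that proof, which I will secure at the outset by replacing the crude embedding $A\hookrightarrow\prod A_i$ with the observation that, after discarding the finitely many torsion elements, $A$ still embeds into a finite product of finitely generated torsion-free domains — concretely, choosing minimal primes of the torsion-free ring obtained by killing the (finitely generated, hence finite as it is bounded) torsion subgroup, or more simply by base-changing to $A\otimes_{\mathbb Z}\mathbb Z[1/m]$ for a suitable $m$ and noting automorphisms extend; I expect this bookkeeping to be the main technical obstacle, since one must check the torsion subgroup of $A$ is well-behaved and that automorphisms of $A$ extend to the modified ring. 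Granting this, let $G=H\cap\prod_i G_i$ under the embedding above; then $G$ has finite index in $\operatorname{Aut}(A)$, and any element of finite order in $G$ maps to an element of finite order in each $G_i$, hence to the identity in each $G_i$, hence is the identity by injectivity. Therefore $G$ is torsion-free of finite index, and $\operatorname{Aut}(A)$ is virtually torsion-free.
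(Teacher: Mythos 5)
Your overall strategy coincides with the paper's proof: finitely many minimal primes $P_1,\dots,P_k$ (the paper packages them as $A\cap(\mathbb F_1\oplus\cdots\oplus\mathbb F_{i-1}\oplus\mathbb F_{i+1}\oplus\cdots\oplus\mathbb F_k)$ inside the total ring of fractions, which is the same thing), a finite-index subgroup $H$ stabilizing each of them, the injective map $H\to\prod_i\Aut(A/P_i)$, Lemma \ref{Lemma1} applied to each domain quotient, then an intersection of the resulting finite-index subgroups. The one genuine gap is precisely the point you flag and then do not close: you never prove that each $A_i=A/P_i$ is without additive torsion, and both of your fallback plans are unsound. The assertion that the conclusion of Lemma \ref{Lemma1} holds ``regardless of torsion'' for a finitely generated domain is false: for $A=\mathbb F_p[x,y]$ the translations $x\mapsto x+f(y)$, $y\mapsto y$, form an infinite elementary abelian $p$-subgroup of $\Aut(A)$, which meets every finite-index subgroup nontrivially, so $\Aut(\mathbb F_p[x,y])$ is not virtually torsion free. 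In the proof of Lemma \ref{Lemma1} the absence of additive torsion is exactly what makes $A$ embed in $A\otimes_{\mathbb Z}\mathbb Q$ and produces the second ring $\widetilde A=\langle 1/p, A\rangle$ with a residue characteristic $q\neq p$; in characteristic $p$ there is no second residue characteristic to ``intersect over.'' Your other fallback (discarding the torsion elements of $A$, or base-changing to $\mathbb Z[1/m]$ for an unspecified $m$) is vacuous: $A$ is torsion free by hypothesis, and the issue is torsion of the quotients, not of $A$.

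The gap is real but closes in two lines, which is what the paper's phrase ``is a domain without additive torsion'' implicitly relies on. A domain with additive torsion has prime characteristic $p$. If $k=1$, then $A$ itself is a domain and Lemma \ref{Lemma1} applies directly. If $k\geq 2$ and $A/P_i$ had characteristic $p$, choose $a\in\bigl(\bigcap_{j\neq i}P_j\bigr)\setminus P_i$; such $a$ exists because the minimal primes are pairwise incomparable, so $\prod_{j\neq i}P_j\subseteq P_i$ would force $P_j\subseteq P_i$ for some $j\neq i$. Then $pa\in P_i\cap\bigcap_{j\neq i}P_j=(0)$ by semiprimeness, while $a\neq 0$, contradicting that $A$ has no additive torsion. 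Hence each $A_i$ is a finitely generated domain without additive torsion, Lemma \ref{Lemma1} applies as stated, and the remainder of your argument (injectivity of $H\to\prod_i\Aut(A_i)$ and the intersection of the pullbacks of the torsion-free subgroups) is correct and is exactly the paper's.
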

\begin{proof}
	Let $S\subset A$ be the set of all nonzero elements that are not zero divisors. Then the ring of fractions $S^{-1}A$ is a direct sum of fields, $S^{-1}A=\mathbb{F}_1\oplus \cdots \oplus \mathbb{F}_k.$ An arbitrary automorphism of the ring $A$ extends to an automorphism of $S^{-1}A.$
	Hence, there exists a subgroup $H\leq {\rm Aut} (A)$ of finite index such that every automorphism from $H$ leaves the summands $\mathbb{F}_1, \ldots , \mathbb{F}_k$ invariant. For each $i, 1\leq i\leq k,$ the factor-ring
	$$K=A/A\cap ( \mathbb{F}_1\oplus \cdots \oplus \mathbb{F}_{i-1} \oplus \mathbb{F}_{i+1}\oplus \cdots \oplus \mathbb{F}_k)$$
	is a domain without additive torsion. By lemma \ref{Lemma1}, there exists a subgroup $H_i< H$ of  finite index such that the image of $H_i$ in $ {\rm Aut} (K)$ is torsion free. This implies that the group $\cap _{i=1}^kH_i$ is torsion free. Indeed, if an element $h\in \cap _{i=1}^kH_i$ has  finite order, then $h$ acts identically modulo $K,$ and we get
	$$(1-h)(A)\subseteq \bigcap _{i=1}^k(\mathbb{F}_1\oplus \cdots \oplus \mathbb{F}_{i-1} \oplus \mathbb{F}_{i+1}\oplus \cdots \oplus \mathbb{F}_k)=(0).$$
	This completes the proof of the lemma.
\end{proof}

\begin{proof}[Proof of theorem \emph{\ref{Theorem4}(b)}.] Let $A$ be a finitely generated representable $\text{PI}$-ring that does not have additive torsion. A.~I.~Malcev \cite{Malcev21} showed that the ring $A$ is embeddable in a matrix algebra over a field of characteristic zero, $A\hookrightarrow M_n(\mathbb{F}), \ {\rm char}\, \mathbb{F}=0.$ Let $a_1, \ldots , a_m$ be generators  of the ring $A,$ and let $\mathbb Z\langle X\rangle$ be the free associative ring on  free generators $x_1, \ldots , x_m.$ If $R\subseteq \mathbb Z\langle x_1, \ldots , x_m\rangle$ is a set of defining relations of the ring $A$ in the generators $a_1, \ldots ,a_m,$ then $A\simeq \langle x_1, \ldots , x_m \ | \ R=(0)\rangle .$
	
	Let $n, m\geq 2.$ Consider $m$ generic $n\times n$ matrices $$X_k=(x_{ij}^{(k)})_{1\leq i, j\leq n},   1\leq k\leq m.$$
	These are $n\times n$ matrices over the polynomial ring $\mathbb Z[X],$ where $$ X=\{x_{ij}^{(k)}, 1\leq i, j\leq n, 1\leq k\leq m\}$$ is the set of variables. The ring $G(m,n)$ generated by generic matrices $ {X}_1, \ldots , {X}_m$  is a domain and it is $\text{PI}$; see \cite{Amitsur3}.
	
	For a relation $r\in R$ let
	$$r(X_1, \ldots , X_m)=\big(r_{ij}(X)\big)_{1\leq i, j\leq n},\quad r_{ij}(X)\in \mathbb Z[X].$$
	Consider the  associative commutative ring $U$ presented by generators $X$ and  relations
	$r_{ij}(X)=0, $ $r\in R, $ $ 1\leq i, j\leq n,$ i.e. $$ U=\mathbb Z[X]/I, \quad
	I={\rm id}_{\mathbb Z[X]}\big(r_{ij}(X), \ r\in R, \ 1\leq i, j\leq n\big).$$
	
	Since the ring $A$ is embeddable in $M_n(\mathbb{F})$ it follows that the homomorphism $$u:A\to M_{n}(U), \quad u(a_k)=X_k+I\in M_n(U),\quad 1\leq k\leq m,$$ is an embedding. Moreover, the ring $U$ has the following universal property: \\ if $C$ is an associative commutative ring and $\varphi :A\to M_n(C)$ is an embedding, then there exists a unique homomorphism $U\to C$ that makes the diagram
	$$ \xymatrix{ {A}\ar [r] ^{u} \ar[rd]_{\varphi} &  {M_n(U)} \ar[d]  \\
		& {M_n(C)} }$$
	commutative.  
	
	This implies that every automorphism of the ring $A$ gives rise to an automorphism of the ring $U.$ Let
	$$T(U)=\{ x\in U \ | \ \text{there exists an integer} \ k\geq 1 \ \text{such  that} \ kx=0\}$$
	be the torsion part of the ring $U.$ Let $J\big(U/T(U)\big)$ be the radical of the ring $U/T(U),$ $J\big(U/T(U)\big)=J/T(U),$ where
	$$(0)\subseteq T(U)\subseteq J\vartriangleleft U, \quad \overline{U}=U/J.$$
	The factor-ring $\overline{U}$ is semiprime and does not have additive torsion. An arbitrary automorphism of the ring $A$ gives rise to an automorphism of $\overline{U}.$
	
	Since the ring $A$ is embeddable in $M_n(\mathbb{F})$, $\rm{char}\, \mathbb{F}=0,$ it follows that $A$ is embeddable in $M_n(\overline{U})$ and the group ${\rm Aut} (A)$  is embeddable in ${\rm \Aut} (\overline{U}).$ By lemma \ref{Lemma2}, the group ${\rm Aut} (\overline{U})$  is virtually torsion free and so is ${\rm Aut} (A).$ This
	completes the proof of theorem \ref{Theorem4}(b). \end{proof}

Recall that  theorem \ref{Theorem4}(b) implies theorems \ref{Theorem1} and \ref{Theorem4}(a).

We will discuss the annoying representability assumption  in theorem \ref{Theorem4}. Let $A$ be a finitely generated $\text{PI}$-algebra over the field of rational numbers $\mathbb Q,$ and let $J$ be the Jacobson radical of the algebra $A.$  By \cite{Braun9}, the Jacobson radical of a finitely generated $\text{PI}$-ring is nilpotent. So, the radical  $J$ is nilpotent. The stabilizer of the descending chain
$A\supset J\supset J^2\supset \cdots \ $  in $ {\rm Aut} (A)$ is torsion free.
Indeed, let $\varphi \in \Aut (A)$ and $(1-\varphi )J^i\subseteq J^{i+1}, i\geq 0.$ We assume that $\varphi ^{n}=1.$ Then we have
$$\varphi ^{n}=(\varphi -1+1)^n=\sum _{i=2}^n\binom{n}{i}(\varphi -1)^i+n(\varphi -1)+1.$$
Hence, $$n(1-\varphi )=\sum _{i= 2}^n\binom{n}{i}(\varphi -1)^i.$$
Suppose that $a\in A$ and $(1-\varphi )a\not=0.$ Let $(1-\varphi )a\in J^k\setminus J^{k+1}.$ By the above, $n(1-\varphi )a\in (\varphi -1)J^k\subseteq J^{k+1},$ a contradiction.

If the group ${\rm Aut} (A/J^2)$ is virtually torsion free, then so is the group ${\rm Aut} (A).$ Indeed, let $H$ be a torsion free subgroup of finite index in ${\rm Aut} (A/J^2)$ and let $\widetilde H$ be the preimage of $H$ under the homomorphism  ${\rm Aut} (A)\to {\rm Aut} (A/J^2).$ If $h\in \widetilde{H}$ is a torsion element, then $h$ acts identically modulo $J^2,$ hence $h$ stabilizes the chain $A\supset J\supset J^2\supset \cdots $ and $h=1.$  We proved that the subgroup $\widetilde H$ of ${\rm Aut} (A)$ is torsion free.

In all known examples of nonrepresentable finitely generated $\text{PI}$-algebras the Jacobson radical is nilpotent of degree $\geq 3.$

\begin{Conjecture*}
A finitely generated $\text{PI}$-algebra with $J^2=0$ is representable.
\end{Conjecture*}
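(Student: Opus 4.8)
The natural plan is to peel off the radical $J$ and treat the two pieces separately, using J.~Lewin's representability theorem for ideal extensions. Since $J^{2}=0$, the ideal $J$ is nilpotent and therefore contained in the lower nilradical of $A$; as the lower nilradical is in any case contained in $J$, we get that $J$ equals the lower nilradical, so that $\bar A:=A/J$ is a finitely generated \emph{semiprime} $\text{PI}$-algebra.

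First, $\bar A$ is representable. Indeed, by the structure theory of finitely generated semiprime $\text{PI}$-algebras (cf.~\ref{1.7}), the ring of fractions of $\bar A$ with respect to its central nonzerodivisors is a finite direct sum of finite-dimensional algebras over fields; each such summand embeds, via its regular representation, into a matrix algebra over a field, a finite direct product of representable algebras is representable, and a subalgebra of a representable algebra is representable. Hence $\bar A\hookrightarrow M_{n}(C)$ for some commutative ring $C$.

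The decisive step is to deduce representability of $A$ from that of $\bar A$. Here I would invoke Lewin's theorem: if $I_{1},I_{2}$ are ideals of a ring $A$ with $I_{1}I_{2}=(0)$ and both $A/I_{1}$ and $A/I_{2}$ are representable, then $A$ is representable. Applied with $I_{1}=I_{2}=J$, the hypothesis $J^{2}=0$ gives $I_{1}I_{2}=(0)$, while the previous paragraph gives that $A/I_{1}=A/I_{2}=\bar A$ is representable. Lewin's construction realizes $A$ inside a formal triangular ring $\left(\begin{smallmatrix}\bar A & J\\ 0 & \bar A\end{smallmatrix}\right)$---legitimate exactly because $J^{2}=0$ makes $J$ a genuine $\bar A$-bimodule---and then replaces the two diagonal copies of $\bar A$ by $M_{n}(C)$.

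The main obstacle is this last replacement. To embed $\left(\begin{smallmatrix}\bar A & J\\ 0 & \bar A\end{smallmatrix}\right)$ into a matrix algebra over a commutative ring one must realize the $\bar A$-bimodule $J$ inside a ``matrix bimodule'' $M_{p\times q}(C')$ over a commutative ring, compatibly with a fixed faithful representation of $\bar A$. This is routine when the bimodule $J$ is sufficiently finite---e.g.\ finitely generated and torsionless on each side over $\bar A$---but a finitely generated $\text{PI}$-algebra need not be one-sided Noetherian (nor even finitely presented), and it is not clear that the radical of such an algebra with $J^{2}=0$ has any such tameness; I expect the genuine difficulty to lie precisely here. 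It is also precisely here that the hypothesis $J^{2}=0$ is essential: it keeps $J$ a single $\bar A$-bimodule rather than an iterated extension of modules over nilpotent quotients, and the conjecture predicts that one such layer is always embeddable, whereas in every known non-representable finitely generated $\text{PI}$-algebra the radical has nilpotency index $\ge 3$, so that several such layers compound and obstruct any matrix representation.
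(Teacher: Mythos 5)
The statement you are trying to prove is not a theorem of the paper at all: it is stated there as an open \emph{conjecture} (the authors only observe that in all known nonrepresentable finitely generated $\text{PI}$-algebras the radical has nilpotency index $\geq 3$, and that a positive answer would let them drop the representability hypothesis in theorem \ref{Theorem4}). So there is no proof in the paper to compare with, and your proposal does not close the gap either --- indeed you concede as much in your final paragraph, where the ``decisive step'' of realizing the $\bar A$-bimodule $J$ inside a matrix bimodule compatibly with a fixed representation of $\bar A$ is left unresolved. That step is exactly the content of the conjecture, so what you have written is a reduction of the problem to itself, not a proof.

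Moreover, the version of Lewin's theorem you invoke (``if $I_{1}I_{2}=(0)$ and $A/I_{1}$, $A/I_{2}$ are representable, then $A$ is representable'') is false in that generality, and one can see this from the paper's own references: by Braun's theorem the radical $J$ of a finitely generated $\text{PI}$-algebra over a field is nilpotent, so applying your statement with $I_{1}=J^{\,n-1}$, $I_{2}=J$ and inducting on the nilpotency index $n$ would show that \emph{every} finitely generated $\text{PI}$-algebra is representable, contradicting Small's example \cite{Small40}. What Lewin actually proves is an embedding of $F/AB$ (for ideals $A,B$ of a \emph{free} algebra $F$) into a triangular ring whose off-diagonal entry is a free $(F/A,F/B)$-bimodule; your algebra $A$ with $J^{2}=0$ is only a proper quotient of $F/\mathfrak{a}^{2}$ (where $\mathfrak{a}$ is the preimage of $J$), and representability does not pass to quotients. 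So the genuine difficulty is not a routine finiteness issue about the bimodule $J$; it is the open problem itself.
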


If this conjecture is true, then the representability assumption in theorem \ref{Theorem4} can be dropped.

The analog of Selberg's theorem holds for automorphism groups of some algebras that are far from being $\text{PI}.$

\begin{proposition}\label{Proposition2}
	Let $A=\mathbb Z\langle x_1, \ldots, x_m\rangle, m\geq 2,$ be the free associative  ring on free generators $ x_1, \ldots, x_m.$ The group of automorphisms ${\rm Aut} (A)$ is virtually torsion free.
\end{proposition}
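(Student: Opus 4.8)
The plan is to mimic the strategy that worked for commutative domains in Lemma \ref{Lemma1}: find a residually-finite quotient structure on which $\mathrm{Aut}(A)$ acts, pass to a finite-index subgroup that stabilizes a suitable filtration, and then use a "stabilizers of filtrations are torsion free" argument together with a coprimality trick to kill torsion entirely. Concretely, first I would look at the abelianization map $A \to \mathbb{Z}[x_1,\ldots,x_m]$, or rather at the induced action of $\mathrm{Aut}(A)$ on $A/(A\, [A,A]\, A + I)$ for well-chosen ideals $I$. The key external input is that the free associative ring is residually (nilpotent-by-finite), and more usefully that $\mathrm{Aut}(\mathbb{Z}\langle x_1,\ldots,x_m\rangle)$ maps to $\mathrm{Aut}$ of each finite quotient $A/I_N$ where $I_N$ is the ideal generated by a prime $p$ together with all $N$-fold products of the augmentation ideal $\omega = (x_1,\ldots,x_m)$; each $A/I_N$ is a finite ring, so its automorphism group is finite.

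Next I would run the filtration argument. Fix a prime $p$ and consider the $\omega$-adic filtration $A \supset \omega \supset \omega^2 \supset \cdots$, where $\omega$ is the two-sided augmentation ideal. Since each $A/\omega^k$ is a finitely generated free $\mathbb{Z}$-module, $\mathrm{Aut}(A)$ has a finite-index subgroup $H_2(p)$ acting trivially on $A/(\omega^2, p)$; as in Lemma \ref{Lemma1}, a multiplicativity computation shows that every $h \in H_2(p)$ satisfies $(1-h)(\omega^k) \subseteq \omega^{k+1} + p\,\omega^{k-1}$ type estimates, and combined with $\bigcap_k \omega^k = (0)$ (true in the free ring) one deduces that any finite-order element of $H_2(p)$ has order a power of $p$. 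The coprimality trick: choosing two distinct primes $p \ne q$ and forming $H_2(p) \cap H_2(q)$ — after also inverting $p$ inside an ambient ring $\widetilde{A} = \langle 1/p, A\rangle$ exactly as in Lemma \ref{Lemma1}, to force the $q$-adic filtration to behave — yields a finite-index subgroup that is torsion free.

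The main obstacle I anticipate is that the noncommutative $\omega$-adic filtration quotients are genuinely noncommutative rings, so the clean "the $\omega$-adic completion is local, so $\bigcap \omega^k = 0$" reasoning from the commutative case must be replaced by a direct verification in the free ring (which is standard: the free ring is graded, so $\bigcap_k \omega^k = 0$ is immediate from the grading). A subtler point is that in the commutative proof one extracts from $\mathcal{P}$, the finite set of ideals with quotient a fixed finite field, an honest $\mathrm{Aut}(A)$-action on a finite set; here I would instead use the characteristic filtration by powers of $\omega$ (which is manifestly $\mathrm{Aut}(A)$-invariant), so no finiteness of a set of ideals is needed — the finite-index subgroups come directly from the action on the finite rings $A/(\omega^k, p)$. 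The binomial computation showing $n(1-\varphi) = \sum_{i\ge 2}\binom{n}{i}(\varphi-1)^i$ for $\varphi^n = 1$, already used in the excerpt for the nilpotent-radical filtration, transfers verbatim and is what converts "$(1-h)(\omega^k) \subseteq \omega^{k+1}$" plus "order prime to $p$" into "$h = 1$"; the only thing to be careful about is that here the relevant "$n$" is forced to be a $p$-power by the first filtration, so one really does need the second prime $q$ and the inversion of $p$ to conclude. Everything else is a routine adaptation of Lemma \ref{Lemma1}.
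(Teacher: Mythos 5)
There is a genuine gap: your construction of the finite-index subgroups $H_2(p)$ rests on the claim that the augmentation ideal $\omega=(x_1,\ldots,x_m)$, and hence the ideals $(\omega^k,p)$, are ``manifestly $\mathrm{Aut}(A)$-invariant''. This is false. The assignment $x_1\mapsto x_1+1$, $x_j\mapsto x_j$ for $j\geq 2$ defines an automorphism of $A=\mathbb Z\langle x_1,\ldots,x_m\rangle$ (its inverse sends $x_1\mapsto x_1-1$), and it does not preserve $\omega$: the ideal $\omega$ is the kernel of one particular homomorphism $A\to\mathbb Z$, $x_i\mapsto 0$, and automorphisms move it to kernels of other such homomorphisms. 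Consequently there is no induced action of $\mathrm{Aut}(A)$ on the finite rings $A/(\omega^2,p)$, so $H_2(p)$ is not defined; this is precisely where your shortcut of skipping the ``finite set of ideals'' step of Lemma~\ref{Lemma1} breaks down. (A secondary inaccuracy: the inversion of $p$ in $\widetilde A=\langle 1/p, A\rangle$ is not what is needed for the second prime here; for the free ring one has suitable ideals of residue characteristic $q$ for every prime $q$ directly, so no inversion is required at all.)

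The gap can be repaired in two ways. The paper's proof replaces $\omega$ by the ideal $I_p$ generated by $p$ and by all elements $a^p-a$, $a\in A$: this ideal is invariant under every automorphism by its very definition, the quotient $A/I_p$ is finite, and since the constant terms of all elements of $I_p$ are divisible by $p$ one gets $\bigcap_{i\geq 1} I_p^{\,i}=(0)$; then $H_1=\ker\bigl(\mathrm{Aut}(A)\to\mathrm{Aut}(A/I_p^{\,2})\bigr)$ has finite index, its torsion elements have $p$-power order (the quotients $I_p^{\,k}/I_p^{\,k+1}$ are annihilated by $p$, so your binomial argument applies), and intersecting such subgroups for two distinct primes $p\neq q$ finishes the proof, with no auxiliary ring $\widetilde A$. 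Alternatively, you could salvage your $\omega$-adic approach by reinstating the step you discarded: the ideals $J\vartriangleleft A$ with $A/J\simeq\mathbb F_p$ are exactly the kernels of the finitely many (at most $p^m$) homomorphisms $A\to\mathbb F_p$, so some finite-index subgroup of $\mathrm{Aut}(A)$ stabilizes $J=pA+\omega$, and then your filtration and binomial computation applied to the powers of $J$, for two distinct primes, goes through.
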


\begin{proof}
	Let $p$ be a prime number. Let $I_p$ be the ideal of the algebra $A$ generated by $p$ and by all elements $a^p-a, a \in A.$ The ideal $I_p$ is invariant under all automorphisms, the factor-ring $A/I_p$ is finite  and constant terms of all elements in $I_p$ are divisible by $p.$ Hence, $$\bigcap _{i\geq 1}I_{p}\,^{i}=(0).$$ The subgroup
	$$H_1={\rm ker} \big({\rm Aut} (A)\to {\rm Aut} (A/I_p^{2})\big)$$ has finite index in ${\rm Aut} (A)$ and every element of finite order in $H_1$ has an order, which is a power of $p.$ Now,  choose a prime number $q,$ $ p\not =q.$ The subgroup $$H_2={\rm ker} \big({\rm Aut} (A)\to {\rm Aut} (A/I_q^{2})\big)$$ also has  finite index in ${\rm Aut} (A)$ and every element of  finite order in $H_2$ has an order which is a power of $q.$ The subgroup $H_1\cap H_2$ is torsion free and has  finite index in ${\rm Aut} (A).$ This completes the proof of the proposition.
\end{proof}

\begin{lemma}\label{Lemma3}
	Let $A$ be a \emph{$\text{PI}$}-algebra. Let $\phantom{i}_{A}{M}$ be a finitely generated left $A$-module. Then the algebra of $A$-module endomorphisms of the module $\phantom{i}_{A}{M}$ is \emph{$\text{PI}.$}
\end{lemma}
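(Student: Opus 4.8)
The plan is to realize $\End_A(M)$ as a subalgebra of a homomorphic image of a subalgebra of a matrix algebra over $A$, and then to use that the class of $\text{PI}$-algebras is closed under passage to subalgebras, to homomorphic images, and to full matrix algebras.

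First, since $M$ is finitely generated, I would fix generators $m_1,\dots,m_k$ and the resulting exact sequence $0\to N\to A^{k}\xrightarrow{\ \pi\ }M\to 0$ of left $A$-modules, and put $B=\End_A({}_{A}A^{k})\cong M_k(A^{\mathrm{op}})$. As $A$ is $\text{PI}$, so is $A^{\mathrm{op}}$, and a full matrix algebra over a $\text{PI}$-algebra is again $\text{PI}$ (this is the single external fact I would quote; see \cite{Belov_Rowen7,Rowen35}), so $B$ is $\text{PI}$.

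Next I would consider the right $B$-module $W=\operatorname{Hom}_A(A^{k},M)$, where $B$ acts by precomposition. Because $A^{k}$ is free (hence projective), applying $\operatorname{Hom}_A(A^{k},-)$ to $\pi$ yields a surjection $B\to W$, $b\mapsto \pi\circ b$, whose kernel is the right ideal $V=\{\,b\in B:\operatorname{im}b\subseteq N\,\}$; thus $W\cong B/V$ as right $B$-modules. The algebra $E=\End_A(M)$ acts on $W$ by postcomposition, $\phi\cdot\psi=\phi\circ\psi$; this action commutes with the $B$-action, and it is faithful, since $\phi\circ\pi=0$ forces $\phi=0$ as $\pi$ is surjective. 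Hence $E$ embeds as a subalgebra into $\End_B\bigl((B/V)_B\bigr)$.

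Finally I would identify $\End_B\bigl((B/V)_B\bigr)$ with the idealizer quotient $\mathbb{L}(V)/V$, where $\mathbb{L}(V)=\{\,c\in B:cV\subseteq V\,\}$: an endomorphism $f$ of $(B/V)_B$ is determined by $f(1+V)=c+V$, and well-definedness amounts to $cV\subseteq V$. Since $\mathbb{L}(V)$ is a subalgebra of $B$ and $V$ is a two-sided ideal of $\mathbb{L}(V)$, the algebra $\End_B(B/V)$ is a homomorphic image of a subalgebra of the $\text{PI}$-algebra $B$, hence $\text{PI}$; therefore $E=\End_A(M)$ is $\text{PI}$. I do not expect a genuine obstacle here: beyond the standard stability of the $\text{PI}$ condition under forming matrix algebras, the argument is a routine diagram chase with $\operatorname{Hom}$-functors and idealizers. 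The only points needing a little care are that $V$ is a right ideal with $W\cong B/V$ and that the $E$-action on $W$ is faithful, both immediate from projectivity of $A^{k}$ and surjectivity of $\pi$; a quantitative version (an explicit identity, with degree bounded in terms of $k$ and an identity of $A$) would follow from the quantitative form of the matrix-algebra fact.
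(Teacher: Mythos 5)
Your argument is correct and is essentially the paper's proof in a more functorial wrapping: your idealizer $\{c\in B: cV\subseteq V\}$ is exactly the paper's subalgebra $E_1$ of endomorphisms of the free module preserving the kernel, your $V$ is its ideal $E_2$ of endomorphisms landing in the kernel, and both proofs conclude by noting that $\End_A(A^k)\cong M_k(A^{\mathrm{op}})$ is PI and that the PI property passes to subalgebras and quotients. The only cosmetic difference is that you obtain an embedding $\End_A(M)\hookrightarrow E_1/E_2$ via the cyclic right $B$-module $\operatorname{Hom}_A(A^k,M)\cong B/V$, while the paper gets the isomorphism directly by lifting endomorphisms through the projective cover; either suffices.
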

\begin{proof}
	Let $M=\sum _{i=1}^{n}Am_i.$ Consider the free $A$-module $V$ on free generators $x_1, \ldots ,x_n:$ $$ V=\sum _{i=1}^{n}Ax_i,$$ and the homomorphism
	$$f: V \to M, \quad x_i  \mapsto m_i,\quad 1\leq i\leq n.$$
	Denote its kernel as  $V_0.$ Let
	$$E_1=\{ \varphi \in  {\rm End} _{A}(V) \ | \ \varphi (V_0)\subseteq V_0\}, \quad E_2=\{ \varphi \in  {\rm End} _{A}(V) \ | \ \varphi (V)\subseteq V_0\}.$$
	Then
	$$ {\rm End} _{A}(M)\simeq E_1/E_2.$$
	The algebra ${\rm End}_A(V)$ is isomorphic to the algebra of $n\times n$ matrices over $A$. Hence, ${\rm End} _A(V)$ is a $\text{PI}$-algebra. This implies that $E_1$ and $E_1/E_2$ are $\text{PI}$-algebras.
\end{proof}

\begin{proof}[Proof of theorem \emph{\ref{Theorem5}}.]  Let $A$ be a finitely generated $\text{PI}$-algebra over $\mathbb Q$, and let $G$ be a  finitely generated torsion subgroup of ${\rm Aut} (A).$ Consider  the Jacobson radical $J$ of the algebra $A.$ The semisimple algebra $\overline{A}=A/J$ is representable; see \cite{Herstein16}. Hence, by theorem \ref{Theorem4}(a), the group ${\rm Aut} (\overline{ A})$ has Selberg's property, and the image of the group $G$ in ${\rm Aut} (\overline{ A})$ is finite. In other words, the subgroup
	$ H=\{\varphi \in G \ | \ (1-\varphi )(A)\subseteq J\}$ has  finite index in $G.$
	
	Consider the subgroup $$K=\{\varphi \in {\rm Aut} (A) \ | \ (1-\varphi )(A)\subseteq J^2\}.$$ We showed that this subgroup centralizes the descending chain $A\supset J\supset J^2\ldots ,$ hence $K$ is a torsion free group. Therefore, $G\cap K=(1)$, and the homomorphism $G\to {\rm Aut} (A/J^2)$ is an embedding. Without loss of generality, we will assume that $J^2=(0).$ The radical $J$ can be viewed as an $\overline A$-bimodule.
	
	Let $a_1, \ldots , a_m$ be generators of the algebra $A,$ and let $h_1, \ldots , h_r$ be generators of the subgroup $H.$ We have $(1-h_i)(A)\subseteq J, J^2=0,$ hence $1-h_i$ is a derivation of the algebra $A.$ This implies that $(1-h_i)(A)$ lies in the $\overline A$-subbimodule of $J$ generated by elements  ${(1-h_i)(a_1), \ldots , (1-h_i)(a_m).}$  Let $J'$ be the $\overline A$-subbimodule of $J$ generated by elements ${(1-h_i)(a_j), 1\leq i\leq r, 1\leq j\leq m.}$ The finitely generated subbimodule $J'$ is invariant with respect to the action of $H.$ For an automorphism $h\in H,$ consider  the restriction  ${\rm Res}(h)$ of $h$ to $J'.$  This restriction is a bimodule automorphism of the $\overline A$-bimodule $J'.$ The mapping $$\varphi : H\to GL(_{\overline A}J'_{\overline A}), \quad h\mapsto {\rm Res}(h),$$ is a homomorphism to the group of bimodule automorphisms $GL(_{\overline A}J'_{\overline A}).$
	The $\overline A$-bimodule $J'$ is a left module over the algebra ${\overline A}\bigotimes _{\mathbb Q}{\overline A}^{op}$ and
	$$ GL(_{\overline A}J'_{\overline A})=GL_{{\overline A}\bigotimes _{\mathbb Q}{\overline A}^{op}}(J').$$
	The algebra ${\overline A}\bigotimes _{\mathbb Q}{\overline A}^{op}$ is $\text{PI}$; see \cite{Regev33}. By lemma \ref{Lemma3}, the algebra $${\rm End} _{{\overline A}\bigotimes _{\mathbb Q}{\overline A}^{op}}(J')$$ is $\text{PI}$ as well. Thus,  $\varphi (H)$ is a finitely generated torsion subgroup  of the  multiplicative group of a $\text{PI}$-algebra. By the result of C.~Procesi \cite{Procesi30}, the group $\varphi (H)$  is finite. The kernel $H'={\rm ker}\, \varphi$ is a subgroup of finite index in $G$ and for an arbitrary element $h\in H'$ we have $(1-h)(A)\subseteq J', (1-h)(J')=(0).$ Let $h^k=1, k\geq 1.$ We have
	$$1-h^k=k(1-h) \ {\rm mod} \ (1-h)^2.$$
	This implies $k(1-h)(A)=0$ and, therefore, $h=1, H'=(1).$
	Hence, $|G|<\infty .$ This completes the proof of the theorem.\end{proof}

\section{Lie rings of locally nilpotent derivations}

\begin{proposition}\label{Proposition3}
	Let $A$ be a finitely generated $\emph{\text{PI}}$-ring. Then the Lie ring ${\rm Der} (A)$ is $\emph{\text{PI}}.$
\end{proposition}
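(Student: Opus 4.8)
The plan is to reduce to the case of a prime PI-ring and then exploit the structure theory recalled in §\ref{1.6}. First I would reduce to the semiprime case: the algebra $A$ is a finitely generated PI-ring, so by Braun's theorem its Jacobson radical $J$ is nilpotent, say $J^N=(0)$. A derivation $d$ of $A$ preserves $J$ (being the radical, which is characteristic), hence preserves the finite filtration $A\supset J\supset J^2\supset\cdots\supset J^N=(0)$. Passing to the associated graded ring $\mathrm{gr}(A)=\bigoplus_i J^i/J^{i+1}$ gives a Lie homomorphism ${\rm Der}(A)\to {\rm Der}(\mathrm{gr}(A))$ whose kernel consists of derivations that shift the filtration by one, i.e. $d(J^i)\subseteq J^{i+1}$; such derivations satisfy $d^N=0$ on $A$, so the kernel is a nil (indeed nilpotent of bounded index) Lie ideal. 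Since $\mathrm{gr}(A)$ is again a finitely generated PI-ring and ${\rm gr}(A)$ is closer to semiprime than $A$ (one further quotient by its radical, noting $\mathrm{gr}(A)/J(\mathrm{gr}(A))\cong A/J$ need not hold verbatim but the radical of $\mathrm{gr}(A)$ is again nilpotent), one iterates; it therefore suffices to prove the statement when $A$ is semiprime. Here I should be a little careful — a cleaner route is: ${\rm Der}(A)\hookrightarrow {\rm Der}(A/J)\times(\text{nilpotent piece})$ in a suitable sense — but the upshot I want is that a PI-identity for ${\rm Der}(A/J)$ together with the bounded nilpotence of the kernel yields a PI-identity for ${\rm Der}(A)$, because an extension of a PI Lie ring by a nilpotent-of-bounded-index ideal is PI (one substitutes the identity of the quotient into the identity and uses that long enough products of ad's of elements landing in the nil ideal vanish).

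So assume $A$ is semiprime, finitely generated, PI. By §\ref{1.7}, the central ring of fractions $Q=(Z^*)^{-1}A$ is a finite direct sum $Q=Q_1\oplus\cdots\oplus Q_k$ of simple algebras, each finite-dimensional over its center $F_i$, and every derivation of $A$ extends uniquely to a derivation of $Q$. Thus ${\rm Der}(A)\hookrightarrow {\rm Der}(Q)=\bigoplus_i {\rm Der}(Q_i)$, and it suffices to bound the PI-degree of ${\rm Der}(Q_i)$ for a single central simple algebra $Q_i$ of dimension $n^2$ over $F_i$. For such an algebra, ${\rm Der}(Q_i)$ is the semidirect sum of the inner derivations $\mathrm{ad}(Q_i)$ (a quotient of the Lie algebra $Q_i$, hence an $(n^2{-}1)$-dimensional-over-$F_i$ Lie algebra) and the $F_i$-module of derivations of the center $F_i$ extended to $Q_i$. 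The inner part $\mathrm{ad}(Q_i)\cong Q_i/Z(Q_i)$ is a module-finite algebra over a commutative ring, hence linear, hence PI as a Lie ring of degree bounded in terms of $n$ (any $n^2$-dimensional Lie algebra over a commutative base satisfies the standard Lie identity of degree $n^2+1$ or so). For the outer part, I would use that ${\rm Der}(F_i)$ acting on $Q_i$ is an abelian-ish piece in the relevant sense — more precisely, the image of ${\rm Der}(Q_i)$ in ${\rm Der}(F_i)$ is an $F_i$-submodule and $F_i$ is a finitely generated field, so $\dim_{F_i}{\rm Der}(F_i)$ equals the transcendence degree $t$ of $F_i$, a finite number; and the full ${\rm Der}(Q_i)$ sits in an exact sequence $0\to\mathrm{ad}(Q_i)\to {\rm Der}(Q_i)\to (\text{$t$-dim'l module of derivations of }F_i)\to 0$. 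An extension of a PI Lie ring by a Lie ring that is finite-dimensional over a field is again PI with degree bounded by the product of the degrees (a standard fact: $L/I$ PI of degree $d$ and $I$ satisfying $[I,\ldots,I]=$ nilpotent-style control, or here simply $I$ finite-dimensional so nilpotent-by-finite), so ${\rm Der}(Q_i)$ is PI of bounded degree.

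The main obstacle I expect is making the "extension of PI by controlled ideal is PI" steps precise and uniform — in particular showing that the kernel of ${\rm Der}(A)\to{\rm Der}(\mathrm{gr}(A))$ (or of ${\rm Der}(A)\to{\rm Der}(A/J)$) behaves well enough: it is nil of bounded index as an associative nil ideal of derivations, but I need a Lie-theoretic consequence, namely that $\mathrm{ad}(d_1)\cdots\mathrm{ad}(d_m)=0$ on ${\rm Der}(A)$ once $m$ is large and all $d_i$ lie in this kernel. This follows because each such $d_i$ is a nilpotent operator on $A$ of index $\le N$, and a product of many $\mathrm{ad}$'s of such operators, expanded, is a sum of words in the $d_i$ and a fixed target derivation, each word of large length, hence $0$ when restricted to $A$ — but one must track the bound carefully. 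Once that is in hand, combined with the structural description above (and recalling that over $\mathbb{Q}$ versus over $\mathbb{Z}$ makes no difference by §\ref{1.2}, and by §\ref{1.1}-type reductions one may assume the ground ring is $\mathbb{Z}$), one assembles a single polynomial identity for ${\rm Der}(A)$ whose degree depends only on: the PI-degree of $A$ (which bounds $n$), the transcendence degrees appearing in the centers $F_i$, the number of simple components $k$, and the nilpotency index $N$ of $J$ — all of which are finite for a fixed finitely generated PI-ring $A$. I would close by noting this finishes the proof and is exactly what is needed to feed ${\rm Der}(A)$ into the Lie-PI theorem of §\ref{1.8} when proving Theorem~\ref{Theorem6}.
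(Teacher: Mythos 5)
Your reduction to the semiprime case fails at the very first step: a derivation of $A$ need not preserve the Jacobson radical. That invariance holds when $A$ has no additive torsion (the paper states and uses exactly this, later, under that hypothesis), but Proposition~\ref{Proposition3} carries no torsion or characteristic assumption, and in the paper it is applied first of all to rings with $pA=(0)$. Already $A=\mathbb{F}_p[x]/(x^p)$ with $d=d/dx$ gives a well-defined derivation (since $d(x^p)=px^{p-1}=0$) with $d(x)=1\notin J=(x)$, so neither $J$ nor the filtration $A\supset J\supset J^2\supset\cdots$ is differentially invariant, and the homomorphisms ${\rm Der}(A)\to{\rm Der}(A/J)$ and ${\rm Der}(A)\to{\rm Der}({\rm gr}(A))$ on which your whole architecture rests simply do not exist. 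Passing to $\Phi=\mathbb{Z}$ does not help, since such torsion quotients still occur there.

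Second, even in the semiprime case the key ``extension'' steps are not standard facts and, as stated, are not correct. The quotient of ${\rm Der}(Q_i)$ by the inner derivations embeds into ${\rm Der}(F_i)$, which is a finite-dimensional $F_i$-vector space but \emph{not} a Lie $F_i$-algebra: $[fd,d']=f[d,d']-d'(f)d$, so the adjoint operators are not $F_i$-linear and no $\mathfrak{gl}$-type linear-algebra bound (``finite-dimensional over a field, hence PI'') applies. Proving that such Witt-type Lie rings of vector fields satisfy a Lie identity is precisely the hard content of the proposition — it is Razmyslov's theorem \cite{Razmyslov32}, which your sketch never invokes — so the plan assumes the crux rather than establishing it; likewise ``$L/I$ PI and $I$ controlled implies $L$ PI'' requires exhibiting an explicit identity (in your radical step this could be salvaged because kernel elements would have vanishing length-$N$ operator products on $A$, but that step already collapsed for the reason above). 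The paper's actual proof sidesteps all of this: it presents $A$ as an image of the generic matrix ring $G(m,n)$, lifts any chosen derivations $d_0,\ldots,d_N$ of $A$ to derivations of the polynomial ring $\mathbb{Z}[X]$ in the $n^2m$ generic-matrix entries, and pushes Razmyslov's identity $P_N$ with $N=(n^2m+1)^2$, valid in $W_{\mathbb{Z}}(n^2m)$, down to ${\rm Der}(A)$ — an argument uniform in the characteristic that needs no radical or semiprime reduction.
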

\begin{proof}
	For an integer $n\geq 2$ consider the following elements of the free Lie ring
	$$P_n(x_0, x_1, \ldots x_n)=\sum_{\sigma \in S_n}(-1)^{|\sigma |}\ad (x_{\sigma (1)})\cdots \ad (x_{\sigma (n)})x_0.$$
	For an associative commutative ring $\Phi$ let $W_{\Phi}(n)$ denote the Lie $\Phi$-algebra of $\Phi$-linear derivations of the polynomial algebra $\Phi [x_1, \ldots ,x_n].$ In \cite{Razmyslov32}, Yu.P.Razmyslov proved that for a field $\mathbb{F}$ of characteristic zero the Lie algebra $W_{\mathbb{F}}(n)$ satisfies the identity $P_N=0,$ where $N=(n+1)^{2}.$ The Lie ring $W_{\mathbb Z}(n)$ is a subring of the $\mathbb Q$-algebra $W_{\mathbb Q}(n).$ Hence, $W_{\mathbb Z}(n)$ satisfies the identity $P_N=0.$ Let $A$ be a $\text{PI}$-ring generated  by elements $a_1, \ldots , a_m.$ Since $A$ is a finitely generated $\text{PI}$-ring, it follows that  $A$ is an epimorphic image  of the ring of generic matrices $G(m, n)$ for some integers $m, n\geq 2;$ see \cite{Belov_Rowen7,Kemer19}. Let
	$$ G(m, n)\to A, \quad  X_k=\big(x_{ij}^{(k)}\big)_{1\leq i, j\leq n}\mapsto a_k, \quad 1\leq k\leq m,$$
	be an epimorphism. Let $N=(n^2m+1)^2.$ We will show that the Lie ring ${\rm Der} (A)$ satisfies the identity $P_N=0.$
	Denote $$X=\{\,x_{ij}^{(k)} \ | \ 1\leq i, j\leq n, \quad 1\leq k\leq m\, \}.$$ Choose derivations $d_0, d_1, \ldots , d_N\in {\rm Der} (A).$ There exist elements $f_{st}(x_1, \ldots , x_m)$ of the free associative ring $\mathbb Z\langle x_1, \ldots , x_m\rangle, $ $ 0\leq s\leq N,$ $ 1\leq t\leq m,$ such that
	$$d_s(a_t)=f_{st}(a_1, \ldots , a_m).$$
	Let $$f_{st}(X_1, \ldots , X_m)=\big(g_{ij}^{st}(X)\big)_{1\leq i, j\leq n},$$ where $g_{ij}^{st}(X)\in \mathbb Z[X]$ are entries of the matrix $f_{st}(X_1, \ldots X_m).$
	Consider  derivations $ \widetilde{d}_{s}$ of the ring $\mathbb Z[X],$  $$\widetilde{d}_{s}(x_{ij}^{(t)})=g_{ij}^{st}(X), \quad  1\leq i, j\leq n, \quad 0\leq s\leq N, \quad 1\leq t\leq m.$$
	Let $L$ be the Lie subring generated by the derivations $ \widetilde{d}_{s}, 0\leq s\leq N$ in ${\rm Der} (\mathbb  Z[X]).$  The mapping $ \widetilde{d}_{s}\to d_{s}, 0\leq s\leq N,$ extends to a homomorphism $L\to {\rm Der} (A).$ This implies $P_N(d_0, d_1, \ldots d_N)=0$ and completes the proof of the proposition.
\end{proof}

Now, our aim is to prove theorem \ref{Theorem6}. In view of \textbf{\ref{1.1}}, we will assume that the  finitely generated $\text{PI}$-algebra $A$ of theorem \ref{Theorem6} is a finitely generated ring.

\vspace{7pt}

Let's prove theorem \ref{Theorem6} and proposition \ref{Proposition1} for the case of prime characteristics.

Let $A$ be a finitely generated $\text{PI}$-ring and let $L\subseteq {\rm Der} (A)$ be a Lie ring that consists of locally nilpotent derivations. Suppose further that there exists a prime number $p\geq 2$ such that $pA=(0).$
	
	Let $a_1, \ldots , a_m$ be generators of the ring $A.$ Let $d\in L.$ There exists a power $p^k$ of the prime number $p$ such that $$d^{p^k}(a_i)=0, \quad 1\leq i\leq m.$$
	
	The power $d^{p^k}$ is again a derivation of the ring $A.$ Hence $d^{p^k}=0.$ This implies that ${\rm ad} (d)^{p^k}=0$ in the Lie ring $L.$ By proposition \ref{Proposition3},  the Lie ring $L$ is $\text{PI},$ and by  results of \cite{Zelmanov43}  (see \textbf{\ref{1.7}}), the Lie ring $L$ is locally nilpotent. Moreover, every finitely generated subalgebra $L_1$ of $L$ acts on $A$ nilpotently, i.e. there exists an integer $s\geq 1$ such that $$\underbrace{L_1\cdots  L_1}_{s}A=(0).$$ This proves theorem \ref{Theorem6} in the case of a prime characteristic.

\vspace{7pt}
	
	Now, let $A$ be an associative commutative ring generated by elements $a_1, \ldots , a_m,$  let $p$ be a prime number such that  $pA=(0),$ and let $L\subseteq {\rm Der} (A)$ be a Lie subring of ${\rm Der} (A).$ Suppose that the subring $A_L$ is an order in $A.$ Then $a_i=b_{i}^{-1}c_i, 1\leq i\leq m,$ where $b_i, c_i\in A_L.$ For an arbitrary derivation $d\in L$ there exists a power $p^k$ such that $d^{p^k}(b_i)=d^{p^k}(c_i)=0, 1\leq i\leq m.$ Then $d^{p^k}(a_i)=0, 1\leq i\leq m, $ and, therefore, $d^{p^k}=0.$ Again, by \cite{Zelmanov43}, the ring $L$ is locally nilpotent. This proves proposition \ref{Proposition1} in the case of prime characteristic.

\vspace{7pt}

A Lie ring $L$ is called \textit{weakly Engel} if for arbitrary elements $a, b\in L$ there exists an integer $n(a, b)\geq 1$ such that
$${\rm ad} (a)^{n(a, b)}b=0.$$
B.~I.~Plotkin \cite{Plotkin27} proved that a weakly Engel Lie  ring has a locally nilpotent radical. In other words, if $L$ is a weakly Engel Lie ring, then $L$ contains the largest locally nilpotent ideal $I$ such that the factor-ring $L/I$ does not contain nonzero locally nilpotent ideals. We denote $I=\text{Loc}(L).$

\begin{lemma}\label{Lemma4}
	Let $A$  be a finitely generated ring and let a Lie ring $L\subseteq {\rm Der} (A)$ consist of locally nilpotent derivations. Then the Lie ring $L$ is weakly Engel.
\end{lemma}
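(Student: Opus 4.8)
The statement to prove is Lemma~\ref{Lemma4}: for a finitely generated ring $A$ and a Lie ring $L\subseteq {\rm Der}(A)$ consisting of locally nilpotent derivations, $L$ is weakly Engel, i.e. for every $a,b\in L$ there is an integer $n=n(a,b)$ with ${\rm ad}(a)^n b=0$. Fix $a,b\in L$ and generators $a_1,\dots,a_m$ of $A$. The key classical identity is that for derivations $d,e$ of an associative ring, $({\rm ad}(d))^n(e)$ — the Lie power computed inside ${\rm Der}(A)$ — expands as $\sum_{i=0}^n (-1)^i \binom{n}{i} d^{\,n-i} e\, d^{\,i}$, viewing $d^{\,n-i}e\,d^{\,i}$ as an operator on $A$ (composition of the $\Phi$-linear maps). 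So the plan is: given that $d=a$ is locally nilpotent, choose $N$ large enough that $d^{N}$ kills every generator $a_j$ — but $d^N$ need not be a derivation, so this alone is not enough; we need $d^N$ to kill a generating set of the whole algebra structure relevant to $e=b$. The cleaner route is to use what was already established: by the prime-characteristic analysis and by reduction, there is a finitely generated subring invariant under $a$ and $b$ on which we can work, but the honest approach here is to bound $k$ so that $a^{k}(a_j)=0$ for all $j$ and then observe that because $a$ is a derivation, $a^{k}$ annihilates the subring generated by $a_1,\dots,a_m$, i.e. $a^{k}=0$ on $A$. Hence ${\rm ad}(a)$ is nilpotent as an operator, ${\rm ad}(a)^{2k-1}(b)=0$ (from the binomial expansion above, since every term has a factor $a^{\geq k}$ acting), giving $n(a,b)\le 2k-1$.

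**Filling the gap.** The subtlety is that $a$ being locally nilpotent does \emph{not} a priori give a uniform $k$ with $a^k=0$: local nilpotency only says each element is killed by some power. But $A$ is finitely generated, say by $a_1,\dots,a_m$; pick $k_j$ with $a^{k_j}(a_j)=0$ and set $k=\max_j k_j$. Since $a$ is a derivation, Leibniz gives $a^{k}(a_{i_1}\cdots a_{i_t}) = \sum_{\ell_1+\cdots+\ell_t=k}\binom{k}{\ell_1,\dots,\ell_t} a^{\ell_1}(a_{i_1})\cdots a^{\ell_t}(a_{i_t})$; this is \emph{not} automatically zero because some $\ell_j$ could be less than $k_j$. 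So the naive argument fails, and instead one must pass to the subring $A_0$ generated by $a_1,\dots,a_m$ and all $a$-orbit elements $a^{i}(a_j)$ (finitely many, since each orbit is finite) — this $A_0$ is $a$-invariant and finitely generated, and on $A_0$ the derivation $a$ is locally nilpotent with a \emph{uniform} bound: indeed $A_0$ is spanned over $\mathbb Z$... no — again the monomial issue recurs. The correct fix, standard in this circle of ideas: because $a$ is locally nilpotent, the subalgebra $A^{a}=\ker a$ together with the sequence of higher kernels $\ker a^i$ exhausts $A$, and a finitely generated $A$ forces $A=\ker a^{K}$ for some $K$ (each generator lies in some $\ker a^{k_j}$, and $\ker a^{K}$ is a \emph{subring} when — here is the point — one uses that $\ker a^{i}\cdot \ker a^{j}\subseteq \ker a^{i+j-1}$ by Leibniz). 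From $a^{k_j}(a_j)=0$ and the product rule $\ker a^{p}\cdot \ker a^{q}\subseteq \ker a^{p+q-1}$, the subring generated by $a_1,\dots,a_m$ lies in $\ker a^{K}$ with $K = \sum_j (k_j-1)+1$; hence $a^{K}=0$ on all of $A$.

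**Conclusion of the argument.** Once $a^{K}=0$ as an operator on $A$, expand ${\rm ad}(a)^{2K-1}(b)=\sum_{i=0}^{2K-1}(-1)^i\binom{2K-1}{i}a^{2K-1-i}\,b\,a^{i}$ as an operator on $A$: in each summand either $i\ge K$ or $2K-1-i\ge K$, so each summand is $0$ on $A$; but ${\rm ad}(a)^{2K-1}(b)$ is a derivation of $A$, and a derivation that is zero as an operator on $A$ is the zero element of $L$. Therefore ${\rm ad}(a)^{2K-1}(b)=0$ in $L$, with $n(a,b)=2K-1$ depending only on $a$ (through the exponents killing the finitely many generators) and not even on $b$. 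This proves $L$ is weakly Engel. The main obstacle — and the one point that must be handled carefully rather than waved through — is exactly the passage from "locally nilpotent" to "nilpotent as an operator", which uses finite generation of $A$ together with the Leibniz-type containment $\ker a^{p}\cdot\ker a^{q}\subseteq\ker a^{p+q-1}$; everything after that is the binomial bookkeeping above.
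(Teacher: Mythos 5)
Your argument breaks at the step where you pass from ``locally nilpotent'' to ``nilpotent as an operator''. The containment $\ker a^{p}\cdot\ker a^{q}\subseteq\ker a^{p+q-1}$ is correct, but it does not give a uniform bound on the subring generated by $a_1,\dots,a_m$: that subring contains monomials $a_{i_1}\cdots a_{i_t}$ of arbitrary length $t$, with repeated factors, and the rule only places such a monomial in $\ker a^{\sum_{\ell}(k_{i_\ell}-1)+1}$, an exponent that grows with $t$. Your formula $K=\sum_j(k_j-1)+1$ tacitly assumes each generator occurs at most once. In fact the conclusion you want is simply false: a locally nilpotent derivation of a finitely generated ring need not be nilpotent. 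Take $A=\mathbb{Z}[y]$ and $a=\partial/\partial y$ (or $A=\mathbb{F}[x,y]$, $a=x\,\partial/\partial y$ if you want a nonzero kernel of every power): then $a^{n+1}(y^{n})=0$ for each $n$, so $a$ is locally nilpotent, yet $a^{n}(y^{n})=n!\neq 0$, so no power of $a$ vanishes on $A$. You even flagged the ``monomial issue'' yourself and then believed the kernel product rule disposes of it; it only quantifies it. Consequently the final binomial computation, which needs $a^{K}=0$ on all of $A$ so that every term $a^{2K-1-i}\,b\,a^{i}$ dies, has no valid starting point, and your claim that $n(a,b)$ can be chosen independently of $b$ is also not available.

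The repair does not require $a^{K}=0$ globally, only that the derivation $\mathrm{ad}(a)^{s}(b)$ kill the finitely many generators; this is how the paper argues. Choose $n$ with $a^{n}(a_j)=0$ for all $j$, and note that the set $\{\,b\,a^{i}(a_j)\ :\ 0\le i\le n-1,\ 1\le j\le m\,\}$ is finite, so local nilpotency of $a$ gives a single $k$ with $a^{k}\bigl(b\,a^{i}(a_j)\bigr)=0$ for all these elements. In the expansion $\mathrm{ad}(a)^{s}(b)=\sum_{i+j=s}(-1)^{j}\binom{s}{i}\,a^{i}\,b\,a^{j}$ with $s=n+k-1$, every term applied to a generator $a_j$ vanishes: either $j\ge n$ (then $a^{j}(a_j)=0$) or $j\le n-1$ and hence $i\ge k$. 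Thus the derivation $\mathrm{ad}(a)^{n+k-1}(b)$ vanishes on the generators of $A$ and is therefore zero, with $n(a,b)=n+k-1$ genuinely depending on both $a$ and $b$. Your concluding observation that a derivation vanishing on $A$ is the zero element of $L$ is fine; it is the uniform-nilpotency claim feeding it that must be abandoned.
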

\begin{proof}
	Let the ring $A$ be generated by elements $a_1, \ldots , a_m.$ Let $d_1, d_2\in L.$ There exists an integer $n\geq 1$ such that $d_1^n(a_i)=0, 1\leq i\leq m.$ Since the set $$\{\, d_2d_1^i(a_j), \quad 0\leq i\leq n-1, \quad 1\leq j\leq m\,\}$$ is finite there exists an integer $k\geq 1$ such that $$d_1^kd_2d_1^i(a_j)=0, \quad 0\leq i\leq n-1, \quad 1\leq j\leq m.$$
	We have $${\rm ad} (d_1)^sd_2=\sum_{i+j=s}(-1)^j\binom{s}{i}d_1^id_2d_1^j.$$ Hence
	$$(\ad (d_1)^{n+k-1}d_2)(a_j)=0, 1\leq j\leq m.$$
	This implies
	${\rm ad} (d_1)^{n+k-1}d_2=0$ and completes the proof of the lemma.
\end{proof}

\begin{lemma}\label{Lemma5}
	Let $A$ be a finitely generated associative commutative ring. Let $L \subseteq  {\rm Der} (A)$ be a Lie ring of derivations such that the subring $A_L$ is an order in $A.$ Then the Lie ring $L$ is weakly Engel.
\end{lemma}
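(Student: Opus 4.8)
The plan is to adapt the argument of Lemma~\ref{Lemma4}, the extra ingredient being that the order structure lets us reduce the vanishing of a derivation on a generating set of $A$ to its vanishing on finitely many elements of $A_L$. Fix $d_1, d_2 \in L$; we must produce an integer $n \geq 1$ with ${\rm ad}(d_1)^n d_2 = 0$ in ${\rm Der}(A)$. Let $a_1, \dots, a_m$ generate the ring $A$. Since $A_L$ is an order in $A$, choose a multiplicative semigroup $S \subset A_L$ witnessing this and write $a_i = s_i^{-1} b_i$ with $s_i \in S \subseteq A_L$ and $b_i \in A_L$, for $1 \leq i \leq m$. Set $C = \{s_1, \dots, s_m, b_1, \dots, b_m\} \subset A_L$, a finite subset.

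The key observation is that any derivation $\delta$ of $A$ with $\delta(c) = 0$ for all $c \in C$ is identically zero: from $s_i s_i^{-1} = 1$ one gets $\delta(s_i^{-1}) = -s_i^{-2}\delta(s_i) = 0$, whence $\delta(a_i) = \delta(s_i^{-1})b_i + s_i^{-1}\delta(b_i) = 0$; since the $a_i$ generate $A$ and $\delta$ is a derivation, $\delta = 0$. So it suffices to find $n$ with $\big({\rm ad}(d_1)^n d_2\big)(c) = 0$ for every $c \in C$.

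Now I would repeat the computation of Lemma~\ref{Lemma4} with the finite set $C$ in place of $\{a_1, \dots, a_m\}$. Because $C \subset A_L$, for each $c \in C$ there is $n_1(c) \geq 1$ with $d_1^{n_1(c)}(c) = 0$; put $n_1 = \max_{c \in C} n_1(c)$, so $d_1^{n_1}(c) = 0$ for all $c \in C$. The set $\{\, d_2 d_1^i(c) \ | \ c \in C,\ 0 \leq i \leq n_1 - 1 \,\} \subset A$ is finite, and $d_1$ is locally nilpotent on all of $A$, so there is $k \geq 1$ with $d_1^k d_2 d_1^i(c) = 0$ for all $c \in C$ and $0 \leq i \leq n_1 - 1$. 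Using
$${\rm ad}(d_1)^s d_2 = \sum_{i+j=s} (-1)^j \binom{s}{i} d_1^i d_2 d_1^j$$
with $s = n_1 + k - 1$: in each term either $j \geq n_1$, so $d_1^j(c) = 0$, or $j \leq n_1 - 1$, forcing $i = s - j \geq k$, so $d_1^i\big(d_2 d_1^j(c)\big) = 0$. Hence $\big({\rm ad}(d_1)^{n_1 + k - 1} d_2\big)(c) = 0$ for every $c \in C$, and by the previous paragraph ${\rm ad}(d_1)^{n_1 + k - 1} d_2 = 0$ in ${\rm Der}(A)$, hence in $L$. This shows $L$ is weakly Engel.

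The only point requiring care — and essentially the sole place the ``order'' hypothesis enters — is the reduction in the second paragraph: that a derivation killing the $s_i$ and $b_i$ automatically kills $a_i = s_i^{-1} b_i$. Beyond that, the proof is a transcription of Lemma~\ref{Lemma4}; the one thing to keep straight is that local nilpotency of $d_1$ is invoked on the $s_i, b_i$ (legitimate since $S \subset A_L$ in the definition of an order) and on the intermediate elements $d_2 d_1^i(c) \in A$ (legitimate since $d_1$ is locally nilpotent on all of $A$). So I do not anticipate a genuine obstacle here.
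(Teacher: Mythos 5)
Your outline is the same as the paper's: write the generators as fractions $a_i=s_i^{-1}b_i$ with $s_i,b_i\in A_L$, run the computation of lemma \ref{Lemma4} on the finite set $C=\{s_i,b_i\}$, and finish by observing that a derivation vanishing on all $s_i,b_i$ vanishes on the $a_i$ and hence on $A$ (this last reduction you carry out correctly). The problem is the justification you give for the middle step. You assert, twice, that ``$d_1$ is locally nilpotent on all of $A$,'' and you use this to find $k$ with $d_1^k d_2 d_1^i(c)=0$ for the intermediate elements $d_2d_1^i(c)\in A$. But lemma \ref{Lemma5} does not assume that $L$ consists of locally nilpotent derivations; the only nilpotency the hypotheses provide is at elements of $A_L$ (that is the definition of $A_L$), and the elements $d_2d_1^i(c)$ are not known to lie in $A_L$ ($A_L$ is not known to be $L$-invariant). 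Note also that if full local nilpotency on $A$ were available, the order hypothesis would be superfluous and the statement would simply be lemma \ref{Lemma4}; the whole reason lemma \ref{Lemma5} exists is for situations such as the ring $\widetilde A$ in the proof of theorem \ref{Theorem3}, where the derivations of $L$ are \emph{not} locally nilpotent on the whole finitely generated ring (they fail at the adjoined inverses $v(b_{ijt})^{-1}$) and are only locally nilpotent on the order $A\cap\widetilde A$. So, as written, your argument proves lemma \ref{Lemma4} again rather than lemma \ref{Lemma5}: this is a genuine gap, and it sits exactly at the one place you declared unproblematic.

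To be fair, the paper's own proof is terse at the same spot: it simply applies the lemma \ref{Lemma4} computation to $b_i,c_i$ without commenting on why some power of $d_1$ annihilates $d_2d_1^j(b_i)$. In the paper's actual uses of the lemma this is harmless, because there the elements playing the role of $b_i,c_i$ lie in a subring that is invariant under $d_1,d_2$ and on which every derivation of $L$ acts locally nilpotently (e.g.\ the original domain $A$ inside $\widetilde A$ in the proof of theorem \ref{Theorem3}), so the intermediate elements never leave the region of local nilpotency. A blind proof, however, cannot simply declare the missing hypothesis ``legitimate'': you must either add an argument that $d_1$ is locally nilpotent at the elements $d_2d_1^i(c)$ under the stated hypotheses, or make explicit the additional invariance assumption under which the lemma \ref{Lemma4} computation applies.
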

\begin{proof}
	Let  $a_1, \ldots ,a_m$ be generators of the ring $A,$ let $a_i=b_i^{-1}c_i, 1\leq i\leq m, $ where $b_i, c_i \in A_L.$ Choose derivations $d_1, d_2 \in L.$ In the proof of  lemma \ref{Lemma4} we showed that there exists an integer $s\geq 1$ such that
	$$({\rm ad} (d_1)^sd_2)(b_i)=({\rm ad} (d_1)^{s}d_2)(c_i)=0, 1\leq i\leq m.$$
	Since $d'={\rm ad} (d_1)^sd_2$ is a derivation of the algebra $A$ it follows that $d'(a_i)=0, 1\leq i\leq m,$ and therefore $d'=0.$ This completes the proof of the lemma.
\end{proof}

\begin{lemma}\label{Lemma6}
	Let $A$ be a finitely generated semiprime $\text{PI}$-ring. Then there exists a family  of homomorphisms $A\to M_{n}(\mathbb Z/p\mathbb Z)$ into matrix rings over prime fields that approximates $A.$
\end{lemma}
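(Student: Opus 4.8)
The plan is to reduce to the case that $A$ is prime, then to embed such an $A$ into a matrix ring over a \emph{finitely generated} commutative domain, and finally to use the Nullstellensatz for finitely generated $\mathbb Z$-algebras to produce homomorphisms onto finite fields, which are then pushed into matrix rings over prime fields. For the reduction: since $A$ is semiprime, its prime radical vanishes, so the intersection of all prime ideals of $A$ is $(0)$; hence for each nonzero $a\in A$ there is a prime ideal $P$ with $a\notin P$, and $A/P$ is again a finitely generated prime $\text{PI}$-ring. It therefore suffices to approximate every finitely generated prime $\text{PI}$-ring $B$ by homomorphisms into matrix rings over prime fields, since pulling these back along $A\twoheadrightarrow A/P$ and taking the union over all $P$ will then approximate $A$. (Alternatively, \textbf{\ref{1.7}} lets one split $A$ into finitely many such quotients, but this is not needed.)

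So let $B$ be a finitely generated prime $\text{PI}$-ring with generators $b_1,\dots,b_m$. By \textbf{\ref{1.6}}, the center $Z$ of $B$ is nonzero and $Q=(Z\setminus(0))^{-1}B$ is a finite-dimensional central simple algebra over the field $F=\mathrm{Frac}(Z)$; set $d=\dim_F Q$. The left regular representation of $Q$ embeds $Q$, and hence $B$, into $\mathrm{End}_F(Q)\cong M_d(F)$. Let $R\subseteq F$ be the subring generated by $1$ together with the finitely many entries of the matrices representing $b_1,\dots,b_m$. Then $R$ is a finitely generated commutative ring, it is a domain because $R\subseteq F$, and the image of $B$ in $M_d(F)$ is contained in $M_d(R)$; thus $B\hookrightarrow M_d(R)$.

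Since $R$ is a finitely generated commutative domain, it is a Jacobson ring each of whose maximal ideals has finite residue field (Hilbert's Nullstellensatz for finitely generated $\mathbb Z$-algebras), and as $R$ is reduced the intersection of all maximal ideals of $R$ is $(0)$. Given a nonzero $x\in B\subseteq M_d(R)$, pick a nonzero matrix entry $r\in R$ of $x$ and a maximal ideal $\mathfrak m$ of $R$ with $r\notin\mathfrak m$, and write $R/\mathfrak m\cong\mathbb F_{p^l}$. The composition
$$B\hookrightarrow M_d(R)\longrightarrow M_d(R/\mathfrak m)=M_d(\mathbb F_{p^l})\hookrightarrow M_{dl}(\mathbb F_p),$$
whose last map is induced by the regular $\mathbb F_p$-representation of $\mathbb F_{p^l}$, is a homomorphism of $B$ into a matrix ring over a prime field that does not annihilate $x$. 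Letting $x$ (equivalently $\mathfrak m$) vary, these homomorphisms approximate $B$, and this finishes the argument.

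The only ingredient from outside the paper is the Nullstellensatz for finitely generated commutative rings; the remainder is routine bookkeeping. The step I would be most careful with is satisfying two requirements simultaneously: that the target be a matrix ring over a \emph{prime} field and that it come from a \emph{finitely generated} commutative ring (so the Nullstellensatz applies). Routing through the finitely generated domain $R$, and then through the embedding $M_d(\mathbb F_{p^l})\hookrightarrow M_{dl}(\mathbb F_p)$ coming from the regular representation, is precisely what reconciles these.
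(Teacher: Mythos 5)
Your argument is correct. It differs from the paper's proof mainly in how representability enters. The paper invokes the representability theorem for finitely generated semiprime $\text{PI}$-rings (citing Herstein) to get a single embedding $A\hookrightarrow M_n(C)$ with $C$ a finitely generated commutative semiprime ring, then applies the Nullstellensatz to $C$ to write it as a subdirect product of finite fields, and finishes exactly as you do by embedding $\mathbb F_{p^l}$ into $l\times l$ matrices over $\mathbb Z/p\mathbb Z$. You instead first pass to the subdirect decomposition of the semiprime ring $A$ into its prime quotients $A/P$, and then build the matrix representation of each prime quotient by hand: Posner--Rowen as recalled in \textbf{\ref{1.6}} gives the finite-dimensional central simple ring of fractions, the regular representation puts $A/P$ into $M_d(F)$, and keeping only the subring $R$ generated by the entries of the generators' images lands you in $M_d(R)$ with $R$ a finitely generated domain, to which the same Nullstellensatz/Jacobson-ring argument applies. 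What the paper's route buys is brevity and a uniform matrix size $n$ before passing to prime fields (the sizes still vary at the last step, as in your proof, which is harmless since only the separation property is used later). What your route buys is self-containedness: apart from the general Nullstellensatz for finitely generated $\mathbb Z$-algebras, which the paper also uses, you rely only on facts already stated in \textbf{\ref{1.6}}, avoiding the citation of the semiprime representability theorem, at the modest cost of the standard fact that a semiprime ring is a subdirect product of prime rings and of an infinite family of quotients rather than one embedding. All the delicate points (injectivity of the central localization for a prime ring, the image of the generated subring staying in $M_d(R)$, finiteness of residue fields and triviality of the Jacobson radical of $R$) are handled correctly.
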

\begin{proof}
	The ring $A$ is representable \cite{Herstein16}, i.e. it is embeddable into a ring of matrices over a finitely generated  associative commutative semiprime ring $C, \  A \hookrightarrow M_n(C).$  Hilbert's Nullstellensatz \cite{Atiyah_Macdonald4} implies that $C$ is a subdirect product  of finite fields. Hence, there exists a family of homomorphisms $\varphi _i : A\to M_n(\mathbb{F}_i),$ where $ \mathbb{F}_i$ are finite fields such that $\cap _{i}{\rm ker}\, \varphi _i=(0).$  If ${\rm char}\, \mathbb{F}_i=p,$ then  the field $\mathbb{F}_i$ is embeddable into a ring of matrices  over $\mathbb Z/p\mathbb Z.$ This completes the proof of the lemma.
\end{proof}

\begin{lemma}\label{Lemma7}
	Let $A$ be a finitely generated prime $\text{PI}$-ring. Let $Z$ be the center of $A$ and let $K$ be the field of fractions of the commutative domain $Z.$ Then ${\rm dim} _{K}K{\rm Der} (A)<\infty .$ \end{lemma}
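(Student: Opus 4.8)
The goal is to show that the Lie ring $\operatorname{Der}(A)$, viewed inside the $K$-vector space $K\operatorname{Der}(A)$ of derivations of the prime PI-ring $A$, spans a finite-dimensional space over $K$. The natural strategy is to pass to the ring of central fractions. Set $Z$ to be the center of $A$ and $K$ the field of fractions of the commutative domain $Z$; by subsection \ref{1.6}, the localization $Q = (Z\setminus\{0\})^{-1}A$ is a finite-dimensional central simple algebra over $K$, say $\dim_K Q = N < \infty$. Every derivation $d\in\operatorname{Der}(A)$ extends uniquely to a derivation of $Q$ via the quotient rule $d(s^{-1}a) = -s^{-1}d(s)s^{-1}a + s^{-1}d(a)$, and this extension is $\mathbb{Z}$-linear; moreover the extension of $d$ restricts to a derivation of $Z$ (since $d$ maps the center of $A$ into the center of $A$, as $A$ is generated by finitely many elements and $d$ is a derivation — more directly, $d(Z)\subseteq Z$ holds for any derivation of any ring). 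So we obtain a map $\operatorname{Der}(A)\to\operatorname{Der}(Q)$, and $K\operatorname{Der}(A)$ embeds into $\operatorname{Der}_K(Q)$, the $K$-linear derivations... except that the extension of $d$ need not be $K$-linear, since $d$ acts nontrivially on $Z$ and hence on $K$.

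This is the main obstacle, and the way around it is standard: for a central simple $K$-algebra $Q$, the Skolem–Noether theorem (or the first Hochschild cohomology computation $H^1(Q,Q)=0$) tells us that every $K$-linear derivation of $Q$ is inner. More to the point, I would use the decomposition $\operatorname{Der}(Q) = \operatorname{Inn}(Q) \oplus (\text{something controlled by } \operatorname{Der}(Z/\mathbb{Z}))$ — precisely, given $d\in\operatorname{Der}(Q)$, its restriction to the center $K$ is a derivation $\delta$ of $K$ (over the prime field), and $\dim_K K\operatorname{Der}(Z)$, hence $\dim_K(\text{space of such }\delta)$, is finite: indeed $Z$ is a finitely generated commutative domain, so $K$ is a finitely generated field extension of its prime field, and the space of derivations of $K$ over the prime field is a finite-dimensional $K$-vector space of dimension equal to the transcendence degree (in characteristic zero) or bounded by it plus a $p$-basis contribution (in characteristic $p$), in any case finite. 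So I would fix a finite $K$-basis $\delta_1,\ldots,\delta_r$ of this space, extend each $\delta_i$ arbitrarily to \emph{some} derivation $D_i$ of $Q$ (possible because $Q$ is étale-by-finite over $Z$; concretely $Q = Z[\theta_1,\ldots]/(\cdots)$ after localizing, and a derivation of $Z$ lifts since $Q$ is a localization of a finitely presented $Z$-algebra that is generically separable), and observe that an arbitrary $d\in\operatorname{Der}(Q)$, after subtracting a suitable $K$-combination $\sum c_i D_i$, becomes $K$-linear, hence inner.

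Putting this together: for $d\in\operatorname{Der}(A)\subseteq\operatorname{Der}(Q)$, write $d = \sum_{i=1}^r c_i D_i + \operatorname{ad}(q)$ for some $c_i\in K$, $q\in Q$. Thus $K\operatorname{Der}(A) \subseteq \sum_{i=1}^r K D_i + \{\operatorname{ad}(q) : q\in Q\}$, and since $\dim_K Q = N$ the inner derivations form a $K$-subspace of dimension at most $N$. Therefore $\dim_K K\operatorname{Der}(A) \leq r + N < \infty$, which is what we want. I expect the delicate point to be the lifting of derivations of $Z$ to derivations of $Q$ and the clean statement that $K$-linear derivations of a central simple algebra are inner; the latter is classical, and for the former it suffices to note that $\operatorname{Der}(Q)\to\operatorname{Der}(Z)$ is surjective because $Q$ is a separable (Azumaya, after inverting nothing further since $Q$ is already central simple over a field) algebra over $K$ — or one can argue more elementarily by choosing a finite presentation of $Q$ over $Z$ and checking that the obstruction to lifting lies in a group that vanishes after localizing $Z$ at the relevant discriminant, which is harmless since we are already working over the field $K$. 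Either way the count above completes the proof.
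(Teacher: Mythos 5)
Your overall architecture (pass to the central quotient ring $\widetilde A=(Z\setminus\{0\})^{-1}A$, then split a derivation into a part acting on the center plus an inner part) can be made to work, but as written it has a genuine gap at exactly the step that carries the finiteness. You assert that ``$Z$ is a finitely generated commutative domain'' and deduce that $K$ is a finitely generated extension of its prime field, hence that the relevant space of derivations of $K$ is finite-dimensional over $K$. Finite generation of $A$ does not pass to its center (centers of affine PI-rings are in general not affine), so this assertion is unjustified and cannot simply be invoked. The fact you actually need --- that $K$ is a finitely generated field, or at least that ${\rm Der}(K)$ is finite-dimensional over $K$ --- is true, but it requires its own argument, e.g.\ an Artin--Tate type count: choose a $K$-basis $b_1,\dots,b_s$ of $\widetilde A$ lying in $A$, let $K_0\subseteq K$ be the subfield generated by the structure constants of this basis and by the coordinates of $1$ and of the generators of $A$; then $\sum_i K_0b_i$ is a $K_0$-algebra of $K_0$-dimension at most $s$ containing $A$, its intersection with $K$ is a field containing $Z$, and hence $K$ is finite over the finitely generated field $K_0$. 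A second, smaller soft spot is the lifting of a derivation of $K$ to a derivation of $\widetilde A$: the ``\'etale-by-finite'' phrasing is hand-waving, and the correct reason (separability of the central simple algebra $\widetilde A$ over $K$) deserves a precise statement or citation. The Skolem--Noether/inner-derivation step and the final count $\dim_K K{\rm Der}(A)\le r+N$ are fine.

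For comparison, the paper's proof avoids all of this structure theory. Since $A$ is generated as a ring by $a_1,\dots,a_m$, an element of $K{\rm Der}(A)$ (viewed as a derivation of $\widetilde A$) that kills $a_1,\dots,a_m$ kills $A$, hence $Z$, hence $K$ and all of $\widetilde A$; so the $K$-linear map $d\mapsto(d(a_1),\dots,d(a_m))$ embeds $K{\rm Der}(A)$ into $\widetilde A^{\oplus m}$, giving $\dim_K K{\rm Der}(A)\le m\,\dim_K\widetilde A$ in a few lines, with no input about derivations of the field $K$, no lifting, and no Skolem--Noether. If you want to keep your route, repair the two points above; it then yields the slightly stronger statement that all of ${\rm Der}(\widetilde A)$ is finite-dimensional over $K$, but the determination-by-generators argument is both shorter and self-contained.
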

\begin{proof}
	Let $a_1, \ldots , a_m$ be generators of the ring $A.$  As we have remarked in \textbf{\ref{1.6}} the ring of fractions $\widetilde A=(Z\setminus ( 0))^{-1}A$ is a finite-dimensional  central simple algebra over the field $K.$ Let ${\rm dim} _{K}\widetilde A=s.$ We will show that ${\rm dim}_KK{\rm Der} (A)\leq ms.$
	Choose $ms+1$ derivations $d_1, \ldots , d_{ms+1}$ of the ring $A.$  Consider the vector space $$V=\underbrace{\widetilde A\oplus \cdots \oplus \widetilde A}_{m}$$ over the field $K$, ${\rm dim} _KV=ms,$ and vectors $v_i=(d_i(a_1), \ldots , d_i(a_m))\in V, 1\leq i\leq ms+1.$ There exist coefficients $k_1, \ldots k_{ms+1}\in K,$ not all equal to $0,$ such that $$\sum _{i=1}^{ms+1}k_iv_i=0.$$  This implies $d(a_i)=0, 1\leq i\leq m,$ where $d=\sum_{i=1}^{ms+1}k_id_i.$ Since $d$ is a derivation of the ring $\widetilde A$ and elements $a_1, \ldots , a_m$ generate $A$ as a ring it follows that $d(A)=(0).$ This implies that $d(K)=0$ and completes the proof of the lemma.
\end{proof}

Now, we will prove theorem \ref{Theorem6} and proposition \ref{Proposition1} in the case when the algebra $A$ is prime.

As above, let $A$ be a finitely generated prime $\text{PI}$-ring, let $Z=Z(A)$ be the center of the ring $A,$ and let $ K=(Z\setminus \{0\})^{-1}Z$ be the field of fractions of the domain $Z.$ Suppose that a Lie ring $L\subseteq {\rm Der} (A)$ consists of locally nilpotent derivations. For a derivation $d\in L$ let ${\rm id}_L(d)$ denote the ideal of the Lie ring $L$ generated by the element $d.$ Consider the descending chain of ideals
$$ I_1=L, \ \
I_{i+1}=\sum _{d\in I_{i}} \ [{\rm id}_L(d), {\rm id}_L(d)].$$
Since ${\rm dim} _KKL<\infty$, by lemma \ref{Lemma7},  it follows that the descending chain
$$ KI_1\supseteq KI_2\supseteq \cdots $$ stabilizes. Let $KI_l=KI_{l+1}= \cdots.$ We will show that $I_l=(0).$ Indeed, there exists a finite collection of derivations $d_1, \ldots , d_r\in I_l$ such that
$$KI_{l+1}=\sum _{i=1}^rK[{\rm id}_L(d_i), {\rm id}_L(d_i)].$$
Recall that
\begin{equation}\label{equation1} {\rm id}_L(d_i)=\mathbb{Z}d_i+\sum _{t\geq 1} [\ldots [d_i, \underbrace{ L], L], \ldots , L}_{t}] .
\end{equation}
Let
\begin{equation}\label{equation2} d\in [{\rm id}_L(d_i), {\rm id}_L(d_i)]. 
\end{equation}
Expanding the commutators on the right-hand sides of (\ref{equation1}) and (\ref{equation2}) we get
$$d=\sum \underset{(\star _1)}{ . . . }d_i\underset{(\star _2)}{ . . . }d_i\underset{(\star _3)}{ . . . },$$
where $(\star _{1})$ is a product of derivations from $L$ and, possibly, a multiplication by an element from $K,$
$(\star _{2})$ and $(\star _{3})$ are products, may be empty, of derivations from $L.$ Hence,
$d=\sum \cdots d_i\cdots ,$ where each summand has a nonempty product of derivations from $L$ to the right of $d_i.$

Since $d_1, \ldots , d_r\in \sum _{j}K[{\rm id}_L(d_j), {\rm id}_L(d_j)],$ we have
\begin{equation}\label{equation3} d_i=\sum k_{ijt}u_{ijt}d_jv_{ijt}, \quad 1\leq i\leq r,
 \end{equation}
where $k_{ijt}\in K; u_{ijt}, v_{ijt}$ are products of derivations from $L;$ $v_{ijt}$ are nonempty products of derivations from $L.$

Let $b$ be a common denominator of all elements $k_{ijt},$ that is $k_{ijt}\in b^{-1}Z.$ Consider the finitely generated prime $\text{PI}$-ring $A_1=\langle b^{-1},  A\rangle.$ The ring $A_1$ is invariant under $\Der (A).$
Suppose that there exists an element $a\in A$ such that $d_i(a)\not =0.$ By lemma \ref{Lemma6},  there exists a family of homomorphisms $\varphi : A_1\to M_{n}(\mathbb Z/p\mathbb Z)$ that approximates the ring $A_1.$ Hence, there exists a prime number $p$ such that $d_i(a)\not \in pA_1.$

Consider the subring $L'$ of the Lie ring $L$ generated by all derivations that are involved in the products $v_{ijt}.$ Clearly, $L'$ is a finitely generated Lie ring. 

We have shown above that theorem \ref{Theorem6} is true for rings of prime characteristics. Applying this result to the ring $A/pA$, we conclude that the ring $L'$ acts nilpotently on $A/pA$. In other words,
there exists an integer $s\geq 1$ such that
\begin{equation}\label{equation4} \underbrace{L'\cdots L'}_{s}(A)\subseteq pA .
\end{equation}
Iterating (\ref{equation3}) $s$ times, we get
$$d_i=\sum u_td_jv_{i_1j_1t_1}\cdots v_{i_sj_st_s},$$
where $u_t\in A_1.$ By (\ref{equation4}), we get
$$ v_{i_1j_1t_1}\cdots v_{i_sj_st_s}A\subseteq pA\subseteq pA_1$$
and, therefore, $d_i(a)\in pA_1, 1\leq i\leq r,$
a contradiction. We showed that $I_l=(0).$ Recall that, by B.~I.~Plotkin's theorem \cite{Plotkin27}, the ring  $L$ has a locally nilpotent radical ${\rm Loc}(L).$ Let $i\geq 1$ be a minimal positive integer such that $I_i\subseteq {\rm Loc}(L), i\leq l.$ Suppose that $i\geq 2.$ For an arbitrary element $a\in I_{i-1}$ the ideal ${\rm id}_L(a)$ is abelian modulo $I_i.$ Since the factor-ring $L/{\rm Loc}(L)$ does not contain nonzero abelian ideals it follows that $a\in {\rm Loc}(L), I_{i-1}\subseteq {\rm Loc}(L),$ a contradiction.

We showed that $L=I_1\subseteq {\rm Loc}(L),$ in other words, the ring $L$ is locally nilpotent. This completes the proof of theorem \ref{Theorem6} in the case when the ring $A$ is prime.

\vspace{7pt}

To finish the proof of proposition \ref{Proposition1} we need just to repeat the arguments above. Let $A$ be a commutative domain, $L\subseteq {\rm Der} (A)$ and $A_L$ is an order in $A.$ We see that the subring $(A_1)_L$ is an order in the ring $A_1$ and, therefore, for any prime number $p$ the subring $(A_1/pA_1)_L$ is an order in $A_1/pA_1.$ In the case of a prime characteristic  proposition \ref{Proposition1} was proved for an arbitrary finitely generated  associative commutative ring, not necessarily a domain. Hence, we can apply it to $A_1/pA_1,$ and finish the proof of proposition \ref{Proposition1} following the proof of theorem \ref{Theorem6} verbatim.

To tackle the semiprime case we will need the following lemma.
\begin{lemma}\label{Lemma8}
	Let $A$ be a finitely generated semiprime ring. Then there exists a finite family of ideals $I_1, \ldots , I_n\vartriangleleft A$ such that each ideal $I_i, 1\leq i\leq n,$ is invariant under ${\rm Der} (A);$ each factor-ring $A/I_i$ is prime, and $$\bigcap _{j=1}^{n}I_j=(0).$$
\end{lemma}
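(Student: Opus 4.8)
The plan is to exploit the standard structure theory of semiprime Noetherian-type $\mathrm{PI}$-rings together with the fact that minimal primes are permuted by every derivation. First I would recall that a finitely generated semiprime $\mathrm{PI}$-ring $A$ has only finitely many minimal prime ideals $I_1,\dots,I_n$, and that semiprimeness gives $\bigcap_{j=1}^n I_j=(0)$ while each $A/I_j$ is prime. This is the candidate family, so the only thing left to check is that each $I_j$ is invariant under $\mathrm{Der}(A)$.

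For the invariance, I would argue as follows. Let $d\in\mathrm{Der}(A)$ and fix a minimal prime $I_j$. Since the set $\{I_1,\dots,I_n\}$ of minimal primes is finite and intrinsically defined, it is enough to show $d(I_j)\subseteq I_j$ for each $j$; equivalently, the derivation $d$ induces a well-defined derivation on $A/I_j$. The cleanest route is via localization at the center: if $Z=Z(A)$ and $Z^\ast$ denotes the non-zero-divisors of $Z$, then by \textbf{\ref{1.7}} the ring $(Z^\ast)^{-1}A$ is a finite direct sum $B_1\oplus\cdots\oplus B_n$ of simple finite-dimensional algebras, and $d$ extends uniquely to a derivation $\widetilde d$ of $(Z^\ast)^{-1}A$ by the quotient rule $\widetilde d(s^{-1}a)=s^{-1}d(a)-s^{-2}d(s)a$. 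A derivation of a finite direct sum of rings with no common direct summands must preserve each summand (the identity element $e_k$ of $B_k$ satisfies $e_k^2=e_k$, whence $\widetilde d(e_k)=2e_k\widetilde d(e_k)$, forcing $\widetilde d(e_k)=0$ since $2e_k-1$ is invertible in $B_k$ — or more simply $\widetilde d(e_k)\in e_kB_k\cap(1-e_k)B=0$; either way $\widetilde d(B_k)\subseteq B_k$). Now $I_j$ is exactly the contraction to $A$ of the ideal $\bigoplus_{k\neq j}B_k$ (reindexing so that $A/I_j$ embeds in $B_j$), and since $\widetilde d$ preserves $\bigoplus_{k\neq j}B_k$, the derivation $d$ preserves its contraction $I_j$. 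This gives the required invariance.

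The key steps in order: (1) invoke that $A$ finitely generated semiprime $\mathrm{PI}$ has finitely many minimal primes $I_1,\dots,I_n$ with zero intersection and prime quotients; (2) form $(Z^\ast)^{-1}A=B_1\oplus\cdots\oplus B_n$ via \textbf{\ref{1.7}}, matching the $B_k$ up with the minimal primes; (3) extend any $d\in\mathrm{Der}(A)$ to $\widetilde d$ on the localization by the quotient rule, checking this is well defined (standard, using that $Z^\ast$ consists of non-zero-divisors and $d(Z^\ast)$ stays central — one should verify $d$ maps $Z$ into $Z$, which follows from $[d(z),a]=d([z,a])-[z,d(a)]=0$ for $z\in Z$, $a\in A$); (4) observe $\widetilde d$ fixes each central idempotent $e_k$ and hence preserves each $B_k$; (5) conclude $d(I_j)\subseteq I_j$ by contracting.

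The main obstacle I anticipate is step (3)–(4): one must be careful that the decomposition $(Z^\ast)^{-1}A=\bigoplus B_k$ is the decomposition into indecomposable central idempotent components, so that no derivation can mix the factors, and that the contraction of $\bigoplus_{k\neq j}B_k$ to $A$ really is the minimal prime $I_j$ rather than some larger or smaller ideal — this requires knowing that the minimal primes of $A$ are in bijection with those of $(Z^\ast)^{-1}A$, which holds because $Z^\ast$ contains no zero divisors and $A$ is $\mathrm{PI}$ (so the Artin–Tate / Rowen theory applies). Once the bookkeeping of this correspondence is set up, the derivation-invariance is essentially automatic. An alternative, avoiding localization, is to note that each minimal prime $I_j$ contains the nilradical (which is zero) and that $d$ permutes the finite set of minimal primes — indeed $d(I_j)\subseteq I_j$ because for a minimal prime $P$ one has $P=\{a\in A: sa\in \mathrm{nil}(A_P)\text{ for some }s\notin P\}$ and derivations respect this description; but the localization argument is more transparent and I would present that one.
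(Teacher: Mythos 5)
Your proof is correct and follows essentially the same route as the paper: localize at the central non-zero-divisors, use the decomposition of $(Z^{*})^{-1}A$ into simple summands from \textbf{\ref{1.7}}, observe that a derivation kills the central idempotents and hence preserves each summand, and take the ideals $I_j$ to be the contractions of the complementary summands. The only inessential difference is that you start from the minimal primes and then identify them with these contractions, which obliges you to justify that there are finitely many minimal primes and that they miss $Z^{*}$, whereas the paper simply \emph{defines} $I_j$ as the contraction and deduces primeness directly from the simplicity of $\widetilde{A}_j$, so the minimal-prime bookkeeping can be dropped altogether.
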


\begin{proof} 	As we have mentioned in \textbf{\ref{1.7}}, the ring of fractions $\widetilde{A}=({Z^{\star}})^{-1}A,$ where $Z^{\star}$ is the set of all nonzero central elements  of $A$ that are not zero divisors, is a direct sum $\widetilde{ A}=\widetilde{ A_1}\oplus \cdots \oplus \widetilde{ A_n}$ of simple finite-dimensional over their centers algebras. Let
	$$ I_i=A\cap (\widetilde{ A}_1+\cdots +\widetilde{ A}_{i-1}+\widetilde{ A}_{i+1}+\cdots +\widetilde{ A}_{n}), 1\leq i\leq n.$$
	All direct summands $\widetilde{A}_i$ are invariant under ${\rm Der} (\widetilde{A}).$  An arbitrary derivation of the ring $A$ extends to a derivation of $\widetilde{A}.$ This implies that each ideal $I_i$ is invariant under ${\rm Der} (A).$
	
	Let us prove that each factor-ring $A/I_i$ is prime. Suppose that $a, b\in A$ and $aAb\subseteq I_i.$ We need to show that $a\in I_i$ or $b\in I_i.$  The inclusion above implies that
	$$a{\widetilde A}b\subseteq \widetilde{ A}_1+\cdots +\widetilde{ A}_{i-1}+\widetilde{ A}_{i+1}+\cdots +\widetilde{ A}_{n}.$$
	The factor-ring
	$${\widetilde A}/(\widetilde{ A}_1+\cdots +\widetilde{ A}_{i-1}+\widetilde{ A}_{i+1}+\cdots +\widetilde{ A}_{n})\simeq {\widetilde A}_{i}$$ is simple. Hence, at least one of the elements $a, b$ lies in $I_i.$ It is straightforward that $I_1\cap \cdots \cap I_n=(0).$ This completes the proof  of the lemma.\end{proof}

Now, we are ready to prove theorem \ref{Theorem6} in the case when the ring $A$ is semiprime. 

Let $A$ be a finitely generated semiprime $\text{PI}$-ring. Let $L$ be a finitely generated Lie subring  $L\subseteq {\rm Der}  (A)$ that consists of locally nilpotent derivations. Let $I_1, \ldots , I_n$ be the ideals of  lemma \ref{Lemma8}.  We showed above that there exists $r\geq 1$ such that
	$$L^r(A/I_i)=(0), 1\leq i\leq n.$$
	Hence, $$L^r(A)\subseteq \bigcap _{i=1}^nI_i=(0) \quad \text{and} \quad L^r=(0).$$ This completes the proof of theorem \ref{Theorem6} for semisimple rings.

\begin{lemma}\label{Lemma9}
	Let $A$ be a finitely generated $\text{PI}$-ring and let $L\subseteq {\rm Der} (A)$ be a Lie ring that consists of locally nilpotent derivations. Let $I\vartriangleleft A$ be a differentially invariant ideal  such that $I^2=(0)$ and the image of the Lie ring $L$ in ${\rm Der} (A/I)$ is locally nilpotent. Then the Lie ring $L$ is locally nilpotent.
\end{lemma}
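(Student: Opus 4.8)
The plan is to reduce the statement to a Selberg-type argument running in parallel to the proof of theorem \ref{Theorem5}, treating $I$ as a module rather than as a radical. First I would fix a finite generating set $d_1,\dots,d_r$ of $L$ and a finite generating set $a_1,\dots,a_m$ of the ring $A$, and replace $L$ by the finitely generated Lie ring it generates; since local nilpotency is a property of finitely generated subrings, it suffices to show that this finitely generated $L$ acts nilpotently on $A$. Because the image $\overline L$ of $L$ in ${\rm Der}(A/I)$ is locally nilpotent, there is an integer $s\ge 1$ with $\overline L{}^{s}(A/I)=(0)$, i.e.
$$\underbrace{L\cdots L}_{s}(A)\subseteq I.$$
So every $s$-fold product of derivations from $L$ maps $A$ into $I$, and since $I^2=(0)$ each such product, restricted appropriately, behaves like a derivation into the $(A/I)$-bimodule $I$. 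The key point is that $I$, viewed as a module over $\overline A=A/I$ (or over $\overline A\otimes_{\mathbb Z}\overline A^{\,op}$ if we need the bimodule structure), is generated as such a module by the finitely many elements obtained by applying the $s$-fold products of the $d_k$ to the $a_j$; call $I'\vartriangleleft A$ the (finitely generated) sub-bimodule of $I$ they generate. Then $I'$ is invariant under $L$, $L^{s}(A)\subseteq I'$, and $L$ acts on $I'$ by bimodule endomorphisms of a finitely generated module over a $\text{PI}$-ring.

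Next I would analyze the action of $L$ on $I'$. By lemma \ref{Lemma3} the ring ${\rm End}_{\overline A\otimes_{\mathbb Z}\overline A^{\,op}}(I')$ is $\text{PI}$ — here I use that $\overline A\otimes_{\mathbb Z}\overline A^{\,op}$ is $\text{PI}$, which holds over $\mathbb Z$ by the same Regev-type argument cited for theorem \ref{Theorem5}, after first reducing to the prime-characteristic and characteristic-zero cases via \textbf{\ref{1.1}} as is done throughout the paper. The map $L\to {\rm Der}(I')\subseteq {\rm End}(I')$ given by restriction is a Lie-ring homomorphism onto a Lie subring of the associative $\text{PI}$-ring ${\rm End}(I')$; and each $d\in L$ acts locally nilpotently on $I'$ (being the restriction of a locally nilpotent derivation of $A$), while by lemma \ref{Lemma4} the restriction of $L$ to the finitely generated ring $I'\rtimes$(generators of $\overline A$), or more directly the argument of lemma \ref{Lemma4} applied to $A$ itself, shows every commutator in the $d_k$ acts ad-nilpotently. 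Now the Lie ring ${\rm Der}(I')$ sits inside the matrix $\text{PI}$-ring over a representable commutative ring, so by proposition \ref{Proposition3} (applied to the finitely generated $\text{PI}$-ring governing $I'$) together with the local nilpotency results of \cite{Zelmanov43} already invoked in the prime-characteristic case, the image $\tilde L$ of $L$ in ${\rm Der}(I')$ is locally nilpotent; hence there is $t\ge 1$ with $\tilde L{}^{t}(I')=(0)$, i.e. $\underbrace{L\cdots L}_{t}(I')=(0)$.

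Finally I would assemble the bound. We have $\underbrace{L\cdots L}_{s}(A)\subseteq I'$ and $\underbrace{L\cdots L}_{t}(I')=(0)$, so $\underbrace{L\cdots L}_{s+t}(A)=(0)$, whence $L^{s+t}=(0)$ as an action on $A$; since $A$ is generated by the $a_j$ this forces $L^{s+t}=(0)$ in the Lie ring $L$ as well, and $L$ is nilpotent. As the argument applied to an arbitrary finitely generated subring of the original $L$, the original $L$ is locally nilpotent, as claimed. The main obstacle I expect is the middle step: ensuring that the restriction $L\to{\rm Der}(I')$ really does land in a setting where the $\text{PI}$ and ad-nilpotency hypotheses of \cite{Zelmanov42,Zelmanov43} are simultaneously available — in particular, controlling the $\text{PI}$-degree of ${\rm End}_{\overline A\otimes\overline A^{\,op}}(I')$ and reducing to prime characteristic, since $I$ need not be a ring with identity and the bimodule structure must be handled with care. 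The passage $L^{s}(A)\subseteq I'$ to a finitely generated sub-bimodule is exactly the device used in the proof of theorem \ref{Theorem5}, so that part should go through verbatim once $1-h_i$ is replaced by the derivations $d_k$.
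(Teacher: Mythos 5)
There is a genuine gap at the very first step, and it is fatal to the whole structure of your argument. From ``the image $\overline L$ of $L$ in ${\rm Der}(A/I)$ is locally nilpotent'' you infer $\overline L^{\,s}(A/I)=(0)$ in the sense that every $s$-fold \emph{associative} product of derivations from $L$ maps $A$ into $I$. But nilpotency of a Lie ring of derivations is a statement about iterated commutators, not about associative products of the operators, and it gives no bound whatsoever on products. Take $A=\mathbb Q[x]$, $I=(0)$ and $L=\mathbb Z\,\frac{d}{dx}$: the hypotheses of the lemma hold ($L$ consists of locally nilpotent derivations and its image in ${\rm Der}(A/I)$ is abelian, hence locally nilpotent), yet no power $\bigl(\frac{d}{dx}\bigr)^{s}$ vanishes, so neither $\underbrace{L\cdots L}_{s}(A)\subseteq I$ nor your final target $\underbrace{L\cdots L}_{s+t}(A)=(0)$ can be achieved. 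Thus both the opening reduction and the concluding assembly (``$L^{s+t}=0$ as an action, hence $L$ is nilpotent'') are unobtainable in general; the conclusion of the lemma must be reached without ever proving that the associative action of $L$ on $A$ is nilpotent. The only thing the hypothesis does give is that every single derivation $d$ lying in the $r$-th lower central term of a finitely generated subring $L'$ maps $A$ into $I$, and this is what the paper uses: since $d(A)\subseteq I$, $I^2=(0)$ and $d$ is locally nilpotent, the Leibniz expansion of $d^{\,l}$ on a product of generators kills all cross terms, so $d^{\,l}=0$ as an operator; then $L'^{\,r}$ is finitely generated (weak Engel, lemma \ref{Lemma4}, plus Plotkin), ${\rm Der}(A)$ is $\text{PI}$ (proposition \ref{Proposition3}), and \cite{Zelmanov43} gives nilpotency of $L'^{\,r}$, hence solvability and (Plotkin again) nilpotency of $L'$.

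A secondary problem is your middle step: to apply the theorem quoted in \textbf{\ref{1.8}} (or \cite{Zelmanov43}) to the image of $L$ in ${\rm End}(I')$ you need ad-nilpotency of the commutators in the generators, and you only offer that each $d\in L$ acts \emph{locally} nilpotently on $I'$ together with the weak Engel property of lemma \ref{Lemma4}; neither gives ad-nilpotent elements. The paper's device of producing honest nilpotent operators $d^{\,l}=0$ from $I^2=(0)$ is precisely what supplies the ad-nilpotency that your bimodule-endomorphism route (modelled on theorem \ref{Theorem5}) does not.
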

\begin{proof}
	Choose derivations $d_1, \ldots, d_n\in L.$
	We need to show that the Lie ring $L'$ generated by $d_1, \ldots, d_n$ is nilpotent. By the assumption of lemma \ref{Lemma8}, there exists $r\geq 1$ such that $L'^r(A)\subseteq I.$ Let $d\in L'^r$ and let $a_1, \ldots , a_m$ be generators of the ring $A.$ There exists an integer $l\geq 1$ such that $d^l(a_j)=0, 1\leq j\leq m.$ Let $v=a_{i_1}\cdots \cdot a_{i_s}$ be a product of generators in the ring $A.$ Since $d(a_i)Ad(a_j)\subseteq I^2=(0)$ it follows that
	$$d^l(a_{i_1}\cdots  a_{i_s}) =d^l(a_{i_1})a_{i_2}\cdots a_{i_s}+a_{i_1}d^l(a_{i_2})\cdots a_{i_s}+\cdots +a_{i_1}\cdots a_{i_{s-1}}d^l(a_{i_s})=0.$$
	Hence $d^l=0.$
	
	Since the ring $L$ is weakly Engel by lemma \ref{Lemma4}, B.~I.~Plotkin's theorem \cite{Plotkin27} implies that the Lie ring $L'^r$ is    finitely generated. Hence, by \cite{Zelmanov43} (see also \textbf{\ref{1.7}}), the Lie ring $L'^r$ is nilpotent and the Lie ring $L'$ is solvable. Again by B.~I.~Plotkin's theorem, the Lie ring $L'$ is nilpotent. This completes the proof of the lemma.\end{proof}

Let us prove theorem \ref{Theorem6} in the case when the ring $A$ does not have additive torsion. 

Let $J$ be the Jacobson radical of the ring $A.$ By \cite{Braun9}, the radical $J$ is nilpotent. Let $J^n=(0), J^{n-1}\not =(0), n\geq 2.$ It is well known that if the ring $A$ does not have additive torsion, then the radical $J$ is differentially invariant.
	
	Let $$I=\{ a\in A \ | \ {\rm there \ exists \ an \ integer} \ s\geq 1 \ \  {\rm such \  that} \  sa\in J^{n-1}\}.$$
	The ideal $I$ is differentially invariant. We claim that $I^2=(0).$  Indeed, let $a, b\in I.$ There exist integers $s_1, s_2 \geq 1$ such that $s_1a\in J^{n-1}, s_2b \in J^{n-1}.$ Hence $s_1s_2ab \in (J^{n-1})^2=(0).$  Since the ring $A$ does not have additive torsion it follows that $ab=0.$
	
	The Jacobson radical of the ring $A/I$ is $J/I, (J/I)^{n-1}=(0).$  The ring $A/I$ obviously does not have additive torsion.  Hence, by inductive assumption on $n$, the image of $L$ in ${\rm Der} (A/I)$ is locally nilpotent; and by lemma \ref{Lemma9}, the ring $L$ is locally nilpotent.

\vspace{7pt}

Now, we are ready to finish the proof of theorem \ref{Theorem6}. 

Let $a_1, \ldots  , a_m$ be generators of a $\text{PI}$-ring $A.$ Let $L\subseteq {\rm Der} (A)$ be a finitely generated Lie subring such that every derivation from $L$ is locally nilpotent. Let $T(A)$ be the ideal of $A$ that consists of elements of a finite additive order. Clearly, $T(A)$ is differentially invariant. The factor-ring $A/T(A)$ does not have an additive torsion. Hence, by the proof of theorem~\ref{Theorem6} in the case when the ring $A$ does not have additive torsion, the image of the  ring $L$ in ${\rm Der} (A/T(A))$ is nilpotent. Therefore, there exists $r\geq 1$ such that for any derivation $d\in L^r$ we have $d(A)\subseteq T(A).$  Since the ring $L$ is finitely generated and weakly Engel by lemma \ref{Lemma4}, it follows from B.~I.~Plotkin's theorem \cite{Plotkin27}  that the Lie ring $L^r$ is finitely generated.
	
	We aim  to show that the Lie ring $L^r$ is nilpotent. Let $d_1', \ldots , d_l'$ be generators of $L^r.$ There exists an integer $n\geq 1$ such that $$nd_i'(a_j)=0, \quad 1\leq i\leq l, \quad 1\leq j\leq m.$$ Hence,
	$nL^r(A)=(0).$
	For a prime number $p$, consider the ideal
	$$I_p=\{ a\in A \ | \ {\rm there \ exists \ an \ integer } \ t\geq 1 \ {\rm such \ that } \ p^ta=0\}.$$ Let $a\in I_p,$ $ d\in L^r.$ Then $nd(a)=0$ and $p^td(a)=0$ for some $t\geq 1.$ Hence, for a prime number $p$ not dividing $n$, we have $L^rI_p=(0).$
	This allows us to consider the factor-ring $A/\sum_{p\nmid n}I_p$ instead of $A.$ In other words, we will assume that for a prime number $p$ not dividing $n$ the ring $A$ does not have a $p$-torsion.
	
	Let $p_1, \ldots , p_s$ be all distinct prime divisors of $n.$ Then $$T(A)=I_{p_1}\oplus \cdots \oplus I_{p_s}. $$ Let $s\geq 2.$ Inducting on the integer $n$ we can assume that the image of the Lie ring $L$ in each ${\rm Der} (A/I_{p_i})$ is nilpotent. In other words, there exists a number $r_i\geq 1$ such that $L^{r_i}(A)\subseteq I_{p_i}.$ This implies
	$$L^{\max{(r_1, r_2)}}(A)\subseteq I_{p_1}\cap I_{p_2}=(0).$$
	Therefore, we assume that $T(A)=I_p$ for some prime number $p.$ The ideal $pI_p$ lies in the Jacobson radical of $A$ and $pI_p$ is differentially invariant. Let $(pI_p)^q=(0), q\geq 1.$ If $q\geq 2,$ then inducting on $q$ we can assume that the image of the Lie ring $L$ in $\Der (A/(pI_p)^{q-1})$ is nilpotent. Hence, the ideal $(pI_p)^{q-1}$ satisfies the assumptions  of lemma \ref{Lemma9}. Suppose, therefore, that $q=1,$ $ pI_p=(0),$ $ n=p.$ Now, we have  $pL^r(A)=(0).$
	This implies that for an arbitrary derivation $d\in L^r$ every $p$-power $d^{p^k}$ is again a derivation. Indeed,
	$$ d^{p^k}(ab)=\sum _{i=0}^{p^k}\binom{p^k}{i}d^i(a)d^{p^k-i}(b)$$
	for  arbitrary elements $a, b\in A.$ If $0< i< p^k,$ then the binomial coefficient $\binom{p^k}{i}$ is divisible by $p,$  hence $$\binom{p^k}{i}d^i(a)=0, \quad \text{which implies} \quad d^{p^k}(ab)=d^{p^k}(a)b+ad^{p^k}(b).$$
	
	Choosing $d\in L^r$ and arguing as above, we find $p^k$ such that $d^{p^k}(a_j)=0, 1\leq j\leq m, $ therefore, $d^{p^k}=0.$ The Lie ring $L^r$ is finitely generated, $\text{PI}$, and an arbitrary derivation from $L^r$ is nilpotent. By \cite{Zelmanov43}, the Lie ring $L^r$ is nilpotent. The ring $L$ is solvable, hence, by the result of B.~I.~Plotkin \cite{Plotkin27}, it is nilpotent. This completes the proof of theorem \ref{Theorem6}.

\vspace{7pt}

Now, our aim is to prove theorem  \ref{Theorem3}. In the rest of this section, we assume that $A$ is a commutative domain; $L\subseteq  {\rm Der} (A)$ is a Lie ring that consists of locally nilpotent derivations; $K$ is the field of fractions of the domain $A,$ and ${\rm dim} _KKL<\infty.$
Our aim is to prove that the Lie ring $L$ is locally nilpotent. Let 
\begin{equation}\label{equation5} KL=\sum _{i=1}^{n}Kd_i, \quad  d_i\in L, \quad \text{and} \quad  [d_i, d_j]=\sum _{t=1}^nc_{ijt}d_t, \quad c_{ijt}=\frac{a_{ijt}}{b_{ijt}},   
\end{equation}
where $a_{ijt},$ $  b_{ijt}\in A.$ Enlarging the set $\{ d_1, \ldots , d_n\}$ if necessary we will assume that the derivations $d_1, \ldots , d_n$ generate $L, $  that is, $L={\rm Lie}_{\mathbb Z}\langle d_1, \ldots , d_n\rangle .$ Let $ d_{i_1} \cdots  d_{i_m}$  be a product in the associative ring of additive endomorphisms of the field $K.$  We call this product \textit{ordered} if $i_1\leq i_2\leq \cdots \leq i_m.$
Let $\mathcal P$ denote the set of all ordered products of  derivations $d_1, \ldots , d_n$ including the empty product, i.e. the identity operator.

\begin{lemma}\label{Lemma10}
	For an arbitrary element $a\in A$ the set of ordered products $v=d_{i_1}\cdots d_{i_m}\in \mathcal P$ such that $v(a)\not =0,$ is finite.
\end{lemma}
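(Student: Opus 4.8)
The plan is to reduce the statement to a finiteness statement about a single locally nilpotent derivation, using the fact that $L$ is spanned over $K$ by finitely many derivations $d_1,\dots,d_n$ and that an ordered product can be reorganized modulo lower-order terms. First I would fix $a\in A$ and observe that it suffices to bound, for each fixed multidegree $(m_1,\dots,m_n)$ with $m_1+\cdots+m_n=m$, the single ordered product $v=d_1^{m_1}\cdots d_n^{m_n}$; since the total number of such products with $v(a)\ne 0$ is what we want to show is finite, and since for a fixed $m$ there are only finitely many multidegrees, it is enough to show that $v(a)=0$ for all multidegrees of sufficiently large total length $m$. So the real content is: there exists $M$ such that every ordered product of length $\ge M$ kills $a$.

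Next I would set up an induction on $n$, the number of generators. For $n=1$ the claim is exactly local nilpotency of $d_1$: there is $k$ with $d_1^k(a)=0$, so all ordered products $d_1^j$ with $j\ge k$ kill $a$. For the inductive step, consider the sub-Lie-ring $L'=\mathrm{Lie}_{\mathbb Z}\langle d_1,\dots,d_{n-1}\rangle$ and the finite set $\Sigma=\{\,w(a) : w\in\mathcal P'\,,\ w(a)\ne 0\,\}$ of nonzero values of ordered products in $d_1,\dots,d_{n-1}$ applied to $a$; this set is finite by the inductive hypothesis. Now apply local nilpotency of $d_n$: for each element $b\in\Sigma$ there is an integer $k(b)\ge 1$ with $d_n^{k(b)}(b)=0$, and also for each generator $a_j$ of $A$ there is $k_j$ with $d_n^{k_j}(a_j)=0$, so $d_n^{K_0}=0$ for $K_0=\max_j k_j$ (a sufficiently high power of a derivation killing all generators is the zero derivation). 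Let $k=\max(K_0,\max_{b\in\Sigma}k(b))$.

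An ordered product $v$ of length $m$ in $d_1,\dots,d_n$ has the form $v=d_1^{m_1}\cdots d_{n-1}^{m_{n-1}}d_n^{m_n}$. Write the $d_n$-part as acting first: $v(a)=w\bigl(d_n^{m_n}(a)\bigr)$ where $w=d_1^{m_1}\cdots d_{n-1}^{m_{n-1}}\in\mathcal P'$. If $m_n\ge K_0$ then $d_n^{m_n}(a)=0$ since $d_n^{K_0}=0$, so $v(a)=0$. Hence we may assume $m_n<K_0$, i.e. the $d_n$-exponent is bounded; there are only $K_0$ choices for $m_n$. For each fixed $m_n$, by the inductive hypothesis applied to the element $d_n^{m_n}(a)\in A$ (it lies in $A$ because $d_n$ preserves $A$) there are only finitely many ordered products $w\in\mathcal P'$ with $w\bigl(d_n^{m_n}(a)\bigr)\ne 0$. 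Taking the union over the finitely many values $m_n\in\{0,1,\dots,K_0-1\}$ gives a finite set of ordered products $v$ with $v(a)\ne 0$. This closes the induction.

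The main obstacle is the bookkeeping in the inductive step: one must be careful that every ordered product genuinely factors as $(\text{ordered product in }d_1,\dots,d_{n-1})\circ d_n^{m_n}$ with the $d_n$-block on the right (i.e. acting first), which is exactly what the definition of ``ordered'' ($i_1\le\cdots\le i_m$) guarantees, so no commutator corrections are needed at all — the reduction is purely combinatorial once ``ordered'' is read correctly. The only analytic inputs are local nilpotency of each $d_i$ and the remark that a power of a derivation that annihilates a generating set annihilates the whole ring; both are available. I expect no genuine difficulty beyond organizing the induction cleanly.
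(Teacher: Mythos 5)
Your core argument is the same as the paper's: read the ordered product $d_1^{k_1}\cdots d_n^{k_n}$ as acting on $a$ with the $d_n$-block first, bound $k_n$ by local nilpotency of $d_n$ at $a$, and then handle the remaining derivations on the finitely many elements $d_n^{i}(a)$, $0\le i<q_n$; the paper does this by iterating the bound over $k_{n-1}, k_{n-2},\ldots$, which is exactly the unrolled form of your induction on $n$.

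One justification in your write-up is wrong, however: you claim that from $d_n^{k_j}(a_j)=0$ for generators $a_j$ of $A$ it follows that $d_n^{K_0}=0$. First, in this part of the paper $A$ is an arbitrary commutative domain with $\dim_K KL<\infty$; finite generation of $A$ is deliberately \emph{not} assumed (that is the whole point of theorem \ref{Theorem3}), so there is no finite generating set of $A$ to invoke. Second, even when $A$ is finitely generated the inference fails in characteristic zero, because a power $d^k$ of a derivation is not a derivation: for $d=\partial/\partial x$ on $\mathbb{Z}[x]$ one has $d^2(x)=0$ but $d^2(x^2)=2\ne 0$. (The paper uses an argument of this shape only in characteristic $p$, where $d^{p^k}$ is again a derivation.) Fortunately, the only fact you need at that point is that $d_n^{m_n}(a)=0$ for all sufficiently large $m_n$, and this is immediate from local nilpotency of $d_n$ applied to the single element $a$; with that replacement (and dropping the set $\Sigma$, which your argument never actually uses) your induction is sound and coincides with the paper's proof.
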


\begin{proof}
	Let $$v=d_1^{k_1}d_{2}^{k_2}\cdots d_n^{k_n}, \quad \text{where} \ k_i \ \text{are nonnegative integers.}$$ There exists an integer $q_n\geq 1$ such that $d_{n}^{q_n}(a)=0.$ Hence, if $v(a)\not =0,$ then $k_n<q_n.$ Similarly, there exists $q_{n-1}\geq 1$ such that $$d_{n-1}^{q_{n-1}}d_n^i(a)=0 \quad \text{for all}   \quad 0\leq i\leq q_{i-1}.$$ Hence, $v(a)\not =0$ implies $k_n<q_n, k_{n-1}<q_{n-1}$ and so on. This completes the proof of the lemma.
\end{proof}

Consider the set $C=\{ c_{ijt}\}_{i, j, t}\subset K$; see (\ref{equation5}).

\begin{lemma}\label{Lemma11}
	An arbitrary product $d_{i_1}\cdots d_{i_r}$ can be represented as
	$$d_{i_1}\cdots d_{i_r}=\sum \pm (v_1(c_1))\cdots (v_s(c_s))v_0,   $$
	where in each summand the operators $v_0,v_1, \ldots ,v_s$ lie in $\mathcal P$ and elements $c_1, \ldots , c_s$ lie in $C.$
	
\end{lemma}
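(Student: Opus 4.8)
The plan is to prove Lemma \ref{Lemma11} by induction on the length $r$ of the product $d_{i_1}\cdots d_{i_r}$, rewriting an arbitrary product as a $K$-combination of ordered products, and controlling the coefficients that appear. For $r=0$ and $r=1$ there is nothing to prove: the empty product and each $d_i$ already lie in $\mathcal P$, so the right-hand side is a single summand with $s=0$, $v_0 = d_{i_1}\cdots d_{i_r}$.

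For the inductive step, suppose $d_{i_1}\cdots d_{i_{r-1}}$ has been written in the desired form $\sum \pm (w_1(c_1'))\cdots(w_{s}(c_{s}'))w_0$ with $w_0,\dots,w_s\in\mathcal P$ and $c_j'\in C$. Multiplying on the right by $d_{i_r}$, it suffices to treat each summand, i.e. to rewrite $(w_1(c_1'))\cdots(w_s(c_s'))\,w_0\,d_{i_r}$. The scalar factors $w_j(c_j')$ are elements of $K$, and since the derivations act on $K$, commuting $d_{i_r}$ past such a scalar $f = w_j(c_j')$ produces $d_{i_r}\cdot f = f\cdot d_{i_r} + d_{i_r}(f)$ — but here $d_{i_r}$ appears to the \emph{right} of the scalars, so no such commutation is needed for the scalars; we only need to handle $w_0 d_{i_r}$. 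Here $w_0 = d_{j_1}\cdots d_{j_k}$ is ordered, and I push $d_{i_r}$ leftward through it one step at a time using the relation $d_b d_{i_r} = d_{i_r} d_b + [d_b, d_{i_r}]$. Whenever we hit a commutator, we use (\ref{equation5}): $[d_b, d_{i_r}] = \sum_t c_{bit} d_t$, which introduces a new scalar $c_{bit}\in C$ multiplied by a single derivation $d_t$ — and that lone $d_t$ together with whatever ordered products sit around it again needs to be re-sorted, but each such re-sorting strictly decreases the number of derivation factors in the unsorted part, so the recursion terminates. Each time a commutator scalar $c_{bit}$ is produced we must also record it with an operator from $\mathcal P$ applied to it; but $c_{bit}$ is introduced "bare", so we apply the empty operator to it, which lies in $\mathcal P$. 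When a scalar $c$ is created to the right of some already-present derivations $u$, we move those derivations past $c$ by $u c = u(c) + (\text{lower order in } u)\cdot c + \cdots$; more precisely, for a derivation $d$ and scalar $c$ we have $d\cdot c = c\cdot d + d(c)$ as operators, and iterating over a product $u\in\mathcal P$ gives $u\cdot c = \sum \pm (u'(c))\, u''$ with $u', u''$ suitable sub-products of $u$, again lying in $\mathcal P$. Thus every scalar ends up in the form $v_j(c_j)$ with $v_j\in\mathcal P$, $c_j\in C$, and every operator factor is collected into a single ordered product $v_0\in\mathcal P$.

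The bookkeeping can be organized cleanly by a single combined induction on the pair (length of the product, number of out-of-order adjacent pairs), or equivalently by defining a terminating rewriting system with two rules — the commutator rule $d_b d_{a} \rightsquigarrow d_a d_b + \sum_t c_{bat} d_t$ for $b>a$, and the Leibniz rule $d\cdot c \rightsquigarrow c\cdot d + d(c)$ for moving derivations past coefficients — and checking it is Noetherian. Each application of the commutator rule decreases the number of inversions in the derivation word while possibly creating new words, but those new words are shorter (one derivation replaced the pair, contributing length-reduction that dominates), and each application of the Leibniz rule moves a derivation strictly to the right past a fixed finite multiset of scalar symbols, so a lexicographic measure works.

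The main obstacle is purely organizational rather than conceptual: making precise the termination argument for the rewriting, since commutators both reorder and shorten the derivation word while simultaneously spawning new scalars that must themselves be pushed through the remaining derivations. I would set up the induction so that the commutator rule is applied with respect to a well-founded order — e.g. first induct on the total number of derivation-letters $r$, and within fixed $r$ induct on the number of inversions — noting that the commutator $[d_b,d_{i_r}] = \sum_t c_{bit}d_t$ replaces two letters by one, hence strictly drops $r$, so those terms are handled by the outer induction, while the Leibniz moves strictly decrease inversions at fixed $r$. The precise shape of the claimed expression — scalars of the form $v_j(c_j)$ with $c_j\in C$, not arbitrary elements of $K$ — is exactly what this controlled rewriting delivers, and this shape is what will later be combined with Lemma \ref{Lemma10} to extract the finiteness needed in the proof of Theorem \ref{Theorem3}.
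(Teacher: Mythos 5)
Your proposal is correct and follows essentially the same route as the paper: straighten the word using $[d_i,d_j]=\sum_t c_{ijt}d_t$ from (\ref{equation5}), push the resulting coefficients to the left via the Leibniz rule so they appear as factors $v'(c)$ with $v'\in\mathcal P$, and terminate by a lexicographic induction on (number of derivation letters, number of inversions), the shorter commutator terms being absorbed by the induction on length. The only blemish is the closing sentence attributing the inversion-decrease at fixed $r$ to the ``Leibniz moves'' rather than to the swapped term of the commutator rule, but your earlier formulation of the termination measure states it correctly, so this is a terminological slip rather than a gap.
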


\begin{proof}
For a product $v=d_{i_1}\cdots d_{i_r}$ let $l$ be the number of $1\leq k\leq r-1,$ such that $i_k>i_{k+1}.$ Let $\nu (v)=(r, l).$ We will compare pairs $(r, l)$ lexicographically and use induction on $\nu (v).$	Let $i=i_k>i_{k+1}=j.$ Then $$d_id_j=d_jd_i+\sum _{t}c_{ijt}d_t.$$ Clearly, $$\nu (d_{i_1}\cdots d_{i_{k-1}}d_jd_id_{i_{k+2}}\cdots d_{i_r})<\nu (v).$$ Consider the product $$d_{i_1}\cdots d_{i_{k-1}}c_{ijt}d_td_{i_{k+2}}\cdots d_{i_r}.$$ Commuting the element $c_{ijt}$ with derivations $d_{i_1}, \ldots , d_{i_{k-1}}$ we get $$d_{i_1}\cdots d_{i_{k-1}}c_{ijt}=\sum (v'(c_{ijt}))v'',$$ where $v', v''$ are products of derivations $d_{i_1}, \ldots , d_{i_{k-1}}$ of total length $k-1.$ Hence, $$d_{i_1}\cdots d_{i_{k-1}}c_{ijt}d_td_{i_{k+2}}\cdots d_{i_r}=\sum \pm (v'(c_{ijt}))v''d_td_{i_{k+2}}\cdots d_{i_r}.$$ In each summand the lengths of products $v'$ and $v''d_td_{i_{k+2}}\cdots d_{i_r}$ are less than $r.$ Applying the induction assumption to these products, we get the assertion of the lemma. \end{proof}

Consider the subring $\widetilde{A}$ of the field $K$ generated by the elements $$v(a_{ijt}), \quad  v(b_{ijt}),\quad v(b_{ijt})^{-1};\quad v\in \mathcal P; \quad i,\ j,\ t\geq 1.$$ By lemma \ref{Lemma10}, the ring $\widetilde A$ is finitely generated.

\begin{lemma}\label{Lemma12}
	The subring $\widetilde A$ is invariant under the action of $L.$
\end{lemma}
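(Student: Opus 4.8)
The plan is to show that $\widetilde{A}$ is invariant under each generator $d_i$, $1\le i\le n$, since these generate $L$ as a Lie ring and the set of $d$ with $d(\widetilde{A})\subseteq\widetilde{A}$ is a Lie subring. Because $\widetilde{A}$ is generated as a ring by the elements $v(a_{ijt})$, $v(b_{ijt})$, and $v(b_{ijt})^{-1}$ with $v\in\mathcal{P}$, and since derivations respect products and satisfy $d_i(x^{-1})=-x^{-1}d_i(x)x^{-1}$, it suffices to check that $d_i$ applied to each of the finitely many generators of the first two types again lands in $\widetilde{A}$; the inverses are then handled automatically. So the real task is to prove: for every $v\in\mathcal{P}$ and every $c\in C$ (writing $c=a_{ijt}/b_{ijt}$, and noting $a_{ijt}=c\,b_{ijt}$, it is equivalent to treat both numerators and denominators, or simply to observe $v(a_{ijt})$ and $v(b_{ijt})$ are among the generators), one has $d_i\bigl(v(a_{ijt})\bigr)\in\widetilde{A}$ and $d_i\bigl(v(b_{ijt})\bigr)\in\widetilde{A}$.

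The key step is the following. Fix a generator, say $w=v(a)$ where $a\in\{a_{ijt},b_{ijt}\}_{i,j,t}$ and $v=d_{j_1}\cdots d_{j_m}\in\mathcal{P}$ is an ordered product. Then $d_i(w)=d_i v(a)=\bigl(d_i d_{j_1}\cdots d_{j_m}\bigr)(a)$, and $d_i d_{j_1}\cdots d_{j_m}$ is an (unordered) product of derivations from $\{d_1,\dots,d_n\}$ of length $m+1$. By Lemma \ref{Lemma11}, this product can be rewritten as a $\mathbb{Z}$-linear combination
$$d_i d_{j_1}\cdots d_{j_m}=\sum \pm\bigl(v_1(c_1)\bigr)\cdots\bigl(v_s(c_s)\bigr)\,v_0,$$
where each $v_0,v_1,\dots,v_s\in\mathcal{P}$ and each $c_\ell\in C$. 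Applying this operator identity to $a$ gives
$$d_i(w)=\sum \pm\bigl(v_1(c_1)\bigr)\cdots\bigl(v_s(c_s)\bigr)\,v_0(a).$$
Now every factor on the right is a generator of $\widetilde{A}$: each $c_\ell\in C$ is of the form $a_{pqr}/b_{pqr}$, so $v_\ell(c_\ell)$ is a polynomial (via the quotient rule / Leibniz expansion) in the elements $v'(a_{pqr})$, $v'(b_{pqr})$, $v'(b_{pqr})^{-1}$ for $v'\in\mathcal{P}$ — again all generators of $\widetilde{A}$ — and $v_0(a)$ is literally one of the listed generators since $a\in\{a_{ijt},b_{ijt}\}$. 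Hence $d_i(w)\in\widetilde{A}$.

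For the inverse generators $v(b_{ijt})^{-1}$, we use $d_i\bigl(v(b_{ijt})^{-1}\bigr)=-v(b_{ijt})^{-1}\,d_i\bigl(v(b_{ijt})\bigr)\,v(b_{ijt})^{-1}$, and the middle factor lies in $\widetilde{A}$ by the previous paragraph while the outer two are generators. Finally, since $\widetilde{A}$ is a ring and $d_i$ is a derivation, $d_i$ sends any ring-theoretic polynomial in the generators into $\widetilde{A}$; thus $d_i(\widetilde{A})\subseteq\widetilde{A}$ for all $i$, and therefore $L(\widetilde{A})\subseteq\widetilde{A}$. The only point demanding a little care — the "main obstacle," though it is more bookkeeping than difficulty — is checking that after expanding $v_\ell(c_\ell)$ through the quotient rule one genuinely stays inside the finitely generated ring $\widetilde{A}$; this is exactly why $\widetilde{A}$ was defined to include the images $v(b_{ijt})^{-1}$ of the denominators under all ordered products, and Lemma \ref{Lemma10} guarantees that this is still a finite list so $\widetilde{A}$ is finitely generated.
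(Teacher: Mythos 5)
Your proof is correct and takes essentially the same route as the paper: you reduce invariance to the generating derivations $d_i$ of $L$ and the listed ring generators of $\widetilde{A}$, apply Lemma \ref{Lemma11} to rewrite $d_iv$, and verify that the factors $v_\ell(c_\ell)$ and the inverse generators stay in $\widetilde{A}$ via the Leibniz/quotient-rule expansions of $v\bigl(a_{ijt}b_{ijt}^{-1}\bigr)$ and $v\bigl(b_{ijt}^{-1}\bigr)$, which are exactly the two displayed identities in the paper's argument. The only difference is that you make explicit the bookkeeping (reduction to the $d_i$, treatment of the generators $v(b_{ijt})^{-1}$) that the paper leaves implicit.
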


\begin{proof}
	For an arbitrarily ordered product of derivations $v\in \mathcal P$ we have
	$$v(b_{ijt}^{-1})=\sum \frac{1}{b_{ijt}^m}(v_1b_{ijt})\cdots (v_sb_{ijt}),$$
	where $m\geq 1; v_1, \ldots , v_s\in \mathcal P,$ and
	$$ v(c_{ijt})=v(a_{ijt}\cdot b_{ijt}^{-1})=\sum _{v', v''\in \mathcal P}v'(a_{ijt})v''(b_{ijt}^{-1}).$$
	These equalities imply $v(c_{ijt})\in \widetilde{A}.$ Now, by lemma \ref{Lemma11}, the ring $\widetilde A$  is invariant under the action of $L.$ \end{proof}

 The ring $\widetilde A$ is generated by elements $v(a_{ijt}), v(b_{ijt})\in A\cap \widetilde{A}$ and elements $v(b_{ijt})^{-1}.$  Hence, an arbitrary element of the ring $\widetilde A$ can be represented as a ratio $a/b,$ where $a, b\in A\cap \widetilde{A}.$ Hence, $A\cap \widetilde{A}$ is an order in the ring $\widetilde A,$ and the multiplicative semigroup $S$ being generated by elements $v(b_{ijt})\not =0.$
	
	By proposition \ref{Proposition1}, the image of the ring $L$ in $\End _{\mathbb Z}(\widetilde{A})$ is a nilpotent Lie ring. Hence, there exists an integer $r\geq 1$ such that $L^{r}(\widetilde{A})=(0).$  By lemma \ref{Lemma5} and Plotkin's theorem \cite{Plotkin27}, the Lie ring $L^r$ is finitely generated.
	Consider the subfield
	$$K_0=\big\{\, \alpha \in K \ |  \ L^r(\alpha )=(0)\, \big\}.$$
	The $K_0$-algebra $A'=K_0A\subseteq K$ is a domain.  The field $K_0$ is invariant under the action of $L,$   hence the $K_0$-algebra $A'$ is invariant as well.
	
	Let $L'$ be the image of the Lie ring $L^r$ in $\End _{\mathbb Z}(A').$ Since all the coefficients $c_{ijt}$ lie in $K_0$ the product $K_0L$ is a Lie ring and a finite-dimensional vector space over $K_0.$ This implies
	that $K_0L'$ is a finite-dimensional $K_0$-algebra. Now,  Petravchuk-Sysak theorem (see \cite{Petravchuk_Sysak26}) implies that $L'$ is a nilpotent Lie ring. Again, by lemma \ref{Lemma5} and B.~I.~Plotkin's theorem, the Lie ring $L$ is nilpotent. This completes the proof of theorem \ref{Theorem3}.

\vspace{7pt}

We will finish with examples showing that corollary \ref{Cor1} of theorem \ref{Theorem1} and theorem \ref{Theorem2} are wrong for countably generated algebras.  Let $\mathbb{F}$ be an arbitrary field and let $A=\mathbb{F}[x_1, x_2, \ldots ]$ be the polynomial algebra on countable many generators. We will construct

(i) a Lie algebra $L\subset {\rm Der} (A)$ that consists of locally nilpotent derivations and is not locally nilpotent,

(ii) a torsion group $G<{\rm Aut} (A)$ that is not locally finite.

Consider the countable-dimensional vector space $V=\sum _{i\geq 1}\mathbb{F}x_i.$ There exists a countable finitely generated Lie algebra $L$ such that  every operator ${\rm ad} (a), a\in L,$ is nilpotent, and the algebra $L$ has zero center (see \cite{Golod14,Lenagan_Smoctunowicz20}). The mapping $L\to \mathfrak{gl}(L),$ $ a\mapsto {\rm ad} (a),$ $ a\in L,$ is an embedding of the Lie algebra $L$ in $\mathfrak{gl}(L)$ and every linear transformation ${\rm ad} (a)$ from the image of $L$ is nilpotent. Therefore, we can suppose that $L\subseteq \mathfrak{gl}(V)$ and every linear transformation from $L$ is nilpotent.  An arbitrary linear transformation on $V$ is  a restriction of a derivation from $$\sum _{i\geq 1}V\frac{\partial}{\partial x_i}.$$ Hence, we assume that
$$L \subseteq  \sum _{i\geq 1}V\frac{\partial}{\partial x_i}\subset {\rm Der} (A).$$
Since every derivation from $L$ acts nilpotently on $V$ it follows that it acts locally nilpotently on $A.$  Similarly, there exists a finitely generated torsion group $G< {\rm Aut} (V)$ that is not locally finite (see \cite{Grigorchuk15,Novikov_Adyan23,Novikov_Adyan24,Novikov_Adyan25}). Every linear transformation $\varphi \in GL(V)$ uniquely extends to an automorphism $\widetilde{\varphi}\in \Aut (A).$ Thus the mapping $GL(V)\to {\rm Aut} (A),$ $\varphi \mapsto \widetilde{\varphi}, $ is an embedding of groups. Hence, $G$ is  a torsion not locally finite subgroup of ${\rm Aut} (A).$

\bibliographystyle{amsplain}

\end{document}